\newcommand{\p}{\mathbb{P}}
\newcommand{\F}{\mathbb{F}}
\newcommand{\lra}{\longrightarrow}
\newcommand{\Lra}{\Longrightarrow}
\def \P {\mathbb{P}}
\newcommand{\ff}{\mathcal{F}}
\newcommand{\hh}{\mathcal{H}}
\newcommand{\qq}{\mathcal{Q}}
\newcommand{\G}{\mathcal{G}}
\newcommand{\fq}{\mathbb{F}_q}
\newcommand{\D}{\mathcal D}
\newcommand{\kk}{\mathbb K}
\newcommand{\xx}{\mathcal X}
\newcommand{\cc}{\mathcal C}
\newtheorem{thm}{Theorem}[section]
\newtheorem{cor}[thm]{Corollary}
\newtheorem{lem}[thm]{Lemma}
\newtheorem{prop}[thm]{Proposition}
\theoremstyle{definition}
\newtheorem{rem}[thm]{Remark}
\newtheorem{exa}[thm]{Example}
\numberwithin{equation}{section}
\begin{document}


\baselineskip=17pt



\title[Frobenius nonclassical curves]{Frobenius nonclassicality with respect to linear systems of curves of arbitrary degree}

\author[N. Arakelian]{Nazar Arakelian}
\address{Instituto de Matem\'atica, Estat\'istica e Computa\c c\~ao Cient\'ifica\\ Universidade Estadual de Campinas\\
Rua S\'ergio Buarque de Holanda, 651, CEP 13083-859, Campinas SP, Brazil}
\email{nazar@ime.unicamp.br}

\author[H.Borges]{Herivelto Borges}
\address{Instituto de Ci\^encias Matem\'aticas e de Computa\c c\~ao\\ Universidade de S\~ao Paulo\\
Avenida Trabalhador S\~ao-carlense, 400, CEP 13566-590, S\~ao Carlos SP, Brazil}
\email{hborges@icmc.usp.br}

\date{}

\begin{abstract}
For each integer $s \geq 1$, we present a family of curves that are $\fq$-Frobenius nonclassical   with respect to the linear system of plane curves of degree $s$. In the case $s=2$, we give necessary and sufficient conditions for such curves to be $\fq$-Frobenius nonclassical with respect to the linear system of conics. In the $\fq$-Frobenius nonclassical cases, we determine the exact number of $\fq$-rational points. In the remaining cases, an  upper bound for the number of $\fq$-rational points will follow from  St\"ohr-Voloch theory.

 \end{abstract}

\subjclass[2010]{Primary 11G20, 14H05; Secondary 14HXX}

\keywords{St\"ohr-Voloch theory, Frobenius nonclassical curves, finite fields.}

\maketitle

\section {Introduction}

Let $p$ be a prime integer and $\fq$ be a finite field with $q=p^h$ elements. The problem of estimating the number of rational points on curves over $\fq$ has been extensively investigated in view of its broad relevance and application, e.g.,   in finite geometry, number theory, coding theory, etc., see \cite{Hi},\cite{GL},\cite[Chapter 6]{LN} and \cite[Chapters 2 and 8]{St}. Like studying curves with many $\fq$-rational points, it also poses an interesting problem.

 Let $\xx$ be a projective, nonsingular, geometrically irreducible curve of genus $g$ defined over $\fq$, and let $N_q(\xx)$ be its number of $\fq$-rational points. The most remarkable result regarding $N_q(\xx)$ is the Hasse-Weil bound, which states that
\begin{equation}\label{hasse-weil}
|N_q(\xx)-(q+1)| \leq 2g\sqrt{q}.
\end{equation}
In 1986, St\"ohr and Voloch introduced a technique to estimate $N_q(\xx)$, which is dependent on the morphisms $\phi: \xx \rightarrow \mathbb{P}^n$ \cite{SV}.  In many instances, their results improve the Hasse-Weil bound (\cite{SV},\cite{GV2}).

In this paper, we consider  a  family of curves $\xx$ and  focus on aspects   relevant to the application of the St\"ohr-Voloch theory, addresing   the Frobenius (non)classicality of $\xx$ with respect to  linear systems of curves degree $s\geq 1$.

Let $F(x,y,z) \in \fq[x,y,z]$ be a homogeneous polynomial, such that 
$$\xx:F(x,y,z)=0$$ is a nonsingular projective plane curve of degree $d$ and genus $g$. Associated with the linear system of all plane curves of degree $s \in \{1,\ldots,d-1\}$, the curve $\xx$ has a linear series $\D_s$ of dimension $M={s+2 \choose 2}-1$ and degree $sd$ \cite[Section 7.7]{HKT}. Applying St\"ohr-Voloch's theorem \cite[Theorem 2.13]{SV} to $\D_s$ yields
\begin{equation}\label{SV-s}
N_q(\xx) \leq \displaystyle\frac{d(d-3)(\nu_1+\cdots +\nu_{M-1})+sd(q+M)}{M},
\end{equation}
where $(\nu_0,\ldots,\nu_{M-1})$ is the $\fq$-Frobenius order sequence of $\xx$ with respect to $\D_s$.
The curve $\xx$ is called $\fq$-Frobenius classical with respect to $\D_s$ if $\nu_i=i$ for all $i=0,\ldots,M-1$. Note that for such a curve, the bound (\ref{SV-s}) reads
\begin{equation}\label{SV-s2}
N_q(\xx) \leq \displaystyle\frac{d(d-3)(M-1)}{2}+\displaystyle\frac{sd(q+M)}{M}.
\end{equation}
The bound (\ref{SV-s2}) improves the Hasse-Weil bound in several cases (\cite[section 3]{SV}, \cite{GV2}).

If $\nu_i \neq i$ for some $i$, then $\xx$ is called $\fq$-Frobenius nonclassical with respect to $\D_s$. Note that for this case, we have
$$\nu_1+\cdots +\nu_{M-1}>M(M-1)/2.$$
Thus (\ref{SV-s})  indicates that Frobenius nonclassical curves are likely to have many rational points. Therefore, if we can identify  the Frobenius nonclassical curves with respect to $\D_s$, we are left with the remaining curves for which a better upper bound, given by (\ref{SV-s2}), holds.  At the same time, the set of Frobenius nonclassical curves provides  a potential source of curves with many points. Therefore, in  light of (\ref{SV-s}), characterizing Frobenius nonclassical curves may offer a two-fold benefit.

In general, the effectiveness of  (\ref{SV-s2}) will  vary according to the value of $s \in \{1,\ldots,d-1\}$. For instance, in the cases  $s=1$ and $s=2$, the bound (\ref{SV-s2}) reads
\begin{equation}\label{SV-1}
N_q(\xx) \leq \displaystyle\frac{d(d+q-1)}{2}
\end{equation}
and
\begin{equation}\label{SV-2}
N_q(\xx) \leq \frac{2d(5d+q-10)}{5},
\end{equation}
respectively. Note that the bound (\ref{SV-2}) is better than (\ref{SV-1}) when, roughly, $d<q/15$. More generally, if $r \geq 1$,  then the bound (\ref{SV-s2}) for $s=r+1$ is better than the corresponding bound for $s=r$ when, roughly,
\begin{equation}
d<\left(\displaystyle\frac{4}{(r+2)(r+3)(r+4)}\right)q.
\end{equation}
These facts can be interpreted as follows. If we want to find plane curves  of degree $d<q/15$ attaining the bound (\ref{SV-1}),  we must look for plane curves  that are $\fq$-Frobenius nonclassical with respect to $\D_2$. Similarly,  plane curves of degree $d<q/30$  attaining  the bound (\ref{SV-2}) must be $\fq$-Frobenius nonclassical with respect to $\D_3$, and so on. An explicit example of this phenomenon is given in Section $3$. This also highlights the importance of Frobenius nonclassical curves for the construction of curves with many points.

Frobenius (non)classicality in the case $s=1$ has been widely  investigated with many examples cited in the literature (\cite{Bo2},\cite{Ga},\cite{GV2},\cite{HV}). Even for this case, however, a complete  characterization of $\fq$-Frobenius nonclassical curves is lacking. As observed by Hefez and Voloch \cite{HV}, characterizing all such curves seems a quite complex  problem.

In 1988, Garcia and Voloch established necessary and sufficient conditions for a Fermat curve, i.e., a curve given by an equation of the type $ax^d+by^d=z^d$, $a,b \in \fq$, $ab \neq 0$, to be $\fq$-Frobenius nonclassical in the cases $s=1$ and $s=2$ \cite{GV2}. It seems that, excluding the Fermat curves, not many $\fq$-Frobenius nonclassical curves with respect to the linear system of conics are characterized. 

In this paper, we study the $\fq$-Frobenius (non)classicality of a generalization of the Fermat curve. More specifically, we study the smooth projective plane curves $\mathcal{X}$ of degree $d=sn$, defined over $\fq$, and  given by the equation $F(x,y,z)=0$, where 
\begin{equation}\label{x}
F(x,y,z)=\displaystyle\sum_{ i+j+t = s}c_{ij}x^{in}y^{jn}z^{tn},
\end{equation}
with $s \geq 1$ and $n\geq 2$.

The paper proceeds as follows. In Section 2, we set some notation and recall the main results of the St\"ohr-Voloch theory, which constitute the basis for this study.  In Section 3, we provide criteria for the curves arising from (\ref{x}) to be $\fq$-Frobenius nonclassical with respect to the linear series $\D_s$. Then we take advantage of these criteria to  construct new curves of degree $d<q/15$ attaining the St\"ohr-Voloch bound (\ref{SV-1}). In Section 4, we fully characterize the $\fq$-Frobenius nonclassical curves arising from (\ref{x}) in the case $s=2$. In Section 5, we determine the exact value of $N_q(\xx)$, when $\xx$ is an  $\fq$-Frobenius nonclassical curve and, via  St\"ohr-Voloch theory, arrive at a nice upper bound for the number of $\fq$-rational points on the remaining curves.

 The paper's  appendix provides facts about the  irreducibility of some plane quartics. The results listed there  are useful in certain  proofs of Section 4.

\text{}\\

\textbf{Notation}

\text{}\\
Hereafter, we  use the following notation: 

\begin{itemize}
\item $\fq$ is the finite field with $q=p^h$ elements, with $h \geq 1$, for a prime integer $p$.
\item $\kk$ is the algebraic closure of $\fq$.
\item Given an irreducible curve $\xx$ over $\fq$ and an algebraic extension $\mathbb{H}$ of $\fq$, the function field of $\xx$ over $\mathbb{H}$ is denoted by $\mathbb{H}(\xx)$.
\item For a curve $\xx$ and $r>0$, the set of its $\F_{q^r}$-rational points is denoted by $\xx(\F_{q^r})$.
\item $N_{q^r}(\xx)$ is the number of $\F_{q^r}$-rational points of the curve $\xx$.
\item For a nonsingular point $P \in \xx$, the discrete valuation at $P$ is denoted by $v_P$.
\item For two plane curves $\xx$ and $\mathcal{Y}$, the intersection multiplicity of $\xx$ and $\mathcal{Y}$ at the point $P$ is denoted by $I(P,\xx \cap \mathcal{Y})$.
\item Given $g \in \kk(\xx)$, $t$ a separating variable of $\kk(\xx)$ and $r \geq 0$, the $r$-th Hasse derivative of $g$ with respect to $t$ is denoted by  $D_t^{(r)}g$.
\end{itemize}


\section{Preliminaries}

In this section, we recall  results from \cite{SV}. Let $\xx$ be a projective, irreducible, nonsingular curve of genus $g$ defined over $\fq$. Associated to a nondegenerated morphism $\phi=(f_0:\ldots:f_n): \xx \lra \p^{n}(\kk)$, there exists a base-point-free linear series given by
$$
\D_\phi=\left\{div\left(\displaystyle\sum_{i=0}^{n}a_if_i\right)+E \ | \ a_0,\ldots,a_n \in \kk\right\},
$$
with $E:=\sum\limits_{P \in \xx}e_PP$ and $e_P=-min\{v_P(f_0),\ldots,v_P(f_n)\}$. Given a point $P \in \xx$, there exists a sequence of non-negative integers $(j_0(P),\ldots,j_n(P))$, such that $j_0(P)<\cdots <j_n(P)$, called order sequence of $P$ with respect to $\phi$, which is defined by the numbers $j\geq0$ such that $v_P(D)=j$ for some $D \in \D_\phi$. Except for a finite number of points of $\xx$, the order sequence is the same, and  denoted by $(\epsilon_0,\ldots,\epsilon_n)$. This sequence can also be defined by the minimal sequence, with respect to the lexicographic order, for which
$$
\det(D_t^{(\epsilon_i)}f_j)_{0 \leq i,j \leq n} \neq 0,
$$
where $t$ is a separating variable of $\kk(\xx)$. Moreover, for each $P\in \xx$, 
\begin{equation}\label{ej}
 \epsilon_i\leq j_i (P) \text{  for all  } i \in \{0,\ldots,n\}.
\end{equation}

The curve $\xx$ is called classical with respect to $\phi$ (or $\D_\phi$) if the sequence $(\epsilon_0,\ldots,\epsilon_n)$ is $(0,\ldots,n)$. Otherwise, it is  is called nonclassical.

Let $\kk(\xx)$ be the function field of $\xx$ and define the subfield 
$$(\kk(\xx))_{r}=\{u^{p^{r}}  \ |  \ u \in \kk(\xx)\}.$$
 In \cite[Theorem 1]{GV1} the following criterion is proved, which is  useful in determining whether  $\xx$ is classical with respect to the given morphism.

\begin{thm}\label{GV1-1}
Let $\phi=(f_0:\ldots:f_n): \xx \lra \p^{n}(\kk)$ be a morphism. Then $f_0,\ldots,f_n$ are linearly independent over $(\kk(\xx))_{r}$ if  and only if there exist integers $\epsilon_0,\ldots,\epsilon_n$ with $$0=\epsilon_0<\cdots<\epsilon_n<p^r,$$ 
such that $\det(D_t^{(\epsilon_i)}f_j)_{0 \leq i,j \leq n} \neq 0$.
\end{thm}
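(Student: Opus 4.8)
The plan is to prove the two implications separately, working throughout in $K:=\kk(\xx)$ with $t$ the fixed separating variable, and relying on three standard identities for Hasse derivatives: the Leibniz rule $D_t^{(m)}(uv)=\sum_{a+b=m}D_t^{(a)}u\,D_t^{(b)}v$, the composition rule $D_t^{(a)}D_t^{(b)}=\binom{a+b}{a}D_t^{(a+b)}$, and the vanishing $D_t^{(a)}(u^{p^r})=0$ whenever $p^r\nmid a$. I will also use that $\bigcap_{1\le a<p^r}\ker D_t^{(a)}=K^{p^r}$, which follows by induction on $r$: $D_t^{(1)}u=0$ forces $u\in K^p$, and then one applies the inductive hypothesis to a $p$-th root of $u$.

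For the implication ``$\Leftarrow$'', assume integers $0=\epsilon_0<\dots<\epsilon_n<p^r$ with $\det(D_t^{(\epsilon_i)}f_j)\ne 0$ exist, and take any relation $\sum_j u_j^{p^r}f_j=0$ with $u_j\in K$. Applying $D_t^{(\epsilon_i)}$ and expanding by the Leibniz rule, every summand carrying a factor $D_t^{(a)}(u_j^{p^r})$ with $1\le a\le\epsilon_i<p^r$ dies, leaving $\sum_j u_j^{p^r}D_t^{(\epsilon_i)}f_j=0$ for each $i$; nonsingularity of the matrix then forces $u_j^{p^r}=0$, hence $u_j=0$, for all $j$. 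So $f_0,\dots,f_n$ are linearly independent over $K^{p^r}$.

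The implication ``$\Rightarrow$'' I will prove by contraposition, and this is the substantive part. If no such integers exist, then the $p^r\times(n+1)$ matrix $(D_t^{(m)}f_j)_{0\le m<p^r,\,0\le j\le n}$ has rank at most $n$ over $K$ (fixing $\epsilon_0=0$ is harmless, since $D_t^{(0)}\vec f=\vec f\ne 0$). Writing $\vec f=(f_0,\dots,f_n)$ and $W:=\operatorname{span}_K\{D_t^{(m)}\vec f:0\le m<p^r\}\subseteq K^{n+1}$, this says $\dim_K W\le n$, so $W^{\perp}=\{\vec a\in K^{n+1}:\sum_j a_jD_t^{(m)}f_j=0\text{ for }0\le m<p^r\}$ is nonzero. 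The first key step is to show that $W^{\perp}$ is stable under $D_t^{(m')}$ for every $0\le m'<p^r$: applying $D_t^{(m')}$ to a defining relation and using the composition rule gives, for each $m<p^r$, the identity $\sum_{c=0}^{m'}\binom{c+m}{c}\sum_j D_t^{(m'-c)}(a_j)\,D_t^{(c+m)}f_j=0$, and I claim all terms with $c\ge 1$ vanish. Indeed, by induction on $m'$ the vector $D_t^{(m'-c)}\vec a$ already lies in $W^{\perp}$; if moreover $c+m<p^r$ the corresponding inner sum vanishes, while if $c+m\ge p^r$ (recall $c,m<p^r$) then Kummer's theorem forces a carry in the base-$p$ addition of $c$ and $m$, so $\binom{c+m}{c}\equiv 0\pmod p$ and the term vanishes anyway. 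Only the $c=0$ term remains, giving $D_t^{(m')}\vec a\in W^{\perp}$. The second key step extracts a vector of $p^r$-th powers: choose a reduced echelon basis $\vec b_1,\dots,\vec b_e$ ($e\ge 1$) of $W^{\perp}$, so each $\vec b_i$ has entry $1$ in its own pivot coordinate and $0$ in the other pivot coordinates; then for $1\le m'<p^r$ the vector $D_t^{(m')}\vec b_i$ lies in $W^{\perp}$ and is zero in every pivot coordinate, hence is $0$, so every coordinate of $\vec b_i$ lies in $\bigcap_{1\le m'<p^r}\ker D_t^{(m')}=K^{p^r}$. Thus $\vec b_1\in(K^{p^r})^{n+1}\setminus\{0\}$, and its membership in $W^{\perp}$ at $m=0$ reads $\sum_j (b_1)_j f_j=0$, a nontrivial linear dependence of $f_0,\dots,f_n$ over $K^{p^r}$.

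The routine points are the Hasse-derivative identities and the computation of $\bigcap_{a}\ker D_t^{(a)}$; the genuine obstacle is the ``$\Rightarrow$'' direction, and within it the stability of $W^{\perp}$ under the lower Hasse derivatives. It is precisely there that the characteristic-$p$ input is used — the vanishing of $\binom{c+m}{c}$ whenever $c,m<p^r$ but $c+m\ge p^r$ — and this step must be set up with care; once it is in place, the echelon-form extraction of a $p^r$-th power vector is immediate.
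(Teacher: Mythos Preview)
The paper does not give its own proof of this theorem: it is quoted verbatim from Garcia and Voloch \cite[Theorem~1]{GV1}, so there is no in-paper argument to compare against.

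Your proof is correct. The ``$\Leftarrow$'' direction is immediate from the Leibniz rule and the vanishing of $D_t^{(a)}(u^{p^r})$ for $1\le a<p^r$. For ``$\Rightarrow$'' your contrapositive strategy is exactly the right one, and the two key steps are handled cleanly: (i) the stability of $W^\perp$ under $D_t^{(m')}$ for $m'<p^r$, where the crucial observation is that $\binom{c+m}{c}\equiv 0\pmod p$ whenever $c,m<p^r$ but $c+m\ge p^r$ (since a carry in base~$p$ is then forced), and (ii) the reduced echelon basis trick, which forces every entry of a basis vector of $W^\perp$ into $\bigcap_{1\le a<p^r}\ker D_t^{(a)}=K^{p^r}$. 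Your justification of this last equality by induction on $r$, using $D_t^{(pb)}(w^p)=(D_t^{(b)}w)^p$, is the standard one. The parenthetical remark that one may take $\epsilon_0=0$ is also fine: if the row span $W$ had full dimension $n+1$, the nonzero row $D_t^{(0)}\vec f=\vec f$ could be completed to a basis of $W$ by further rows.

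For what it is worth, this is essentially the argument Garcia and Voloch give: they too exploit the identity $D_t^{(a)}D_t^{(b)}=\binom{a+b}{a}D_t^{(a+b)}$ together with the $p$-adic behaviour of binomial coefficients to show that the solution space of the Wronskian system is closed under low-order Hasse derivatives, and then descend to $K^{p^r}$. So your proof is not only correct but recovers the original line of reasoning.
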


Proposition 1.7 in \cite{SV} establishes the following.

\begin{prop}\label{SV1.7}
Let $P \in \xx$ be a point with order sequence $(j_0(P),\ldots,j_n(P))$. If the integer 
$$\prod_{i>r}\frac{j_i(P)-j_r(P)}{i-r}$$ is not divisible by $p$, then $\xx$ is classical with respect to $\D_{\phi}$.
\end{prop}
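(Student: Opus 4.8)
The plan is to show that, under the hypothesis, the \emph{classical} Wronskian $W_0:=\det\bigl(D_t^{(i)}f_j\bigr)_{0\le i,j\le n}$ does not vanish, for a conveniently chosen separating variable $t$. Indeed, $(\epsilon_0,\dots,\epsilon_n)$ is by definition the lexicographically least strictly increasing $(n+1)$-tuple of nonnegative integers with $\det\bigl(D_t^{(\epsilon_i)}f_j\bigr)\neq 0$, and $(0,1,\dots,n)$ is the least such tuple; hence $W_0\neq 0$ forces $(\epsilon_0,\dots,\epsilon_n)=(0,1,\dots,n)$, i.e.\ $\xx$ is classical with respect to $\D_\phi$. (Reading the product in the statement over all pairs $0\le k<i\le n$, it equals $\det\bigl(\binom{j_i(P)}{k}\bigr)_{0\le i,k\le n}$, so the hypothesis says exactly that this integer is prime to $p$.)

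First I would take $t$ to be a local parameter at $P$: such a $t$ is automatically a separating variable of $\kk(\xx)$, so the Hasse derivatives $D_t^{(m)}$ make sense and $W_0$ may be computed with respect to it. Since the $\phi$-orders $j_0(P)<\dots<j_n(P)$ are pairwise distinct, I would then replace $(f_0,\dots,f_n)$ by an osculating basis $(g_0,\dots,g_n)$ of $\langle f_0,\dots,f_n\rangle_\kk$ with $v_P(g_i)=j_i(P)-e_P$, where $e_P=-\min_j v_P(f_j)$; this only multiplies $W_0$ by a nonzero constant. Expanding $g_i=c_i\,t^{\,j_i(P)-e_P}+(\text{higher order in }t)$ with $c_i\neq 0$ gives $D_t^{(k)}g_i=c_i\binom{j_i(P)-e_P}{k}t^{\,j_i(P)-e_P-k}+(\text{higher order})$.

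Expanding $W_0=\sum_\sigma\operatorname{sgn}(\sigma)\prod_i D_t^{(i)}g_{\sigma(i)}$ as a Laurent series in $t$, every summand has $t$-order at least $\sum_i\bigl(j_i(P)-e_P\bigr)-\binom{n+1}{2}$, and the coefficient of $t^{\sum_i(j_i(P)-e_P)-\binom{n+1}{2}}$ in $W_0$ equals
$$\Bigl(\prod_{i=0}^n c_i\Bigr)\det\Bigl(\tbinom{\,j_i(P)-e_P\,}{k}\Bigr)_{0\le i,k\le n}=\Bigl(\prod_{i=0}^n c_i\Bigr)\prod_{0\le k<i\le n}\frac{j_i(P)-j_k(P)}{i-k},$$
the last step being the polynomial identity $\det\bigl(\binom{x_i}{k}\bigr)_{0\le i,k\le n}=\prod_{k<i}(x_i-x_k)/(i-k)$, valid for $x_i=j_i(P)-e_P$ of either sign, and in which the shifts $e_P$ cancel in the differences. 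By hypothesis the right-hand integer is prime to $p$, hence nonzero in $\kk$; as $\prod_i c_i\neq 0$ too, this coefficient is nonzero, so $W_0\neq 0$ and we conclude.

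I do not expect a serious obstacle: the argument is a local Laurent-expansion computation once the right basis is chosen. The points that need a word of justification are that a uniformizer at $P$ is a separating variable, that the osculating basis is legitimate and renders the relevant valuations distinct (so that the binomial determinant is not forced to vanish trivially, and the base-divisor shift $e_P$ drops out), and the determinant identity itself; the only mild bookkeeping nuisance is that some entries $\binom{j_i(P)-e_P}{k}$ may be zero in characteristic $p$, but such entries contribute $0$ to the coefficient in question and leave the conclusion intact.
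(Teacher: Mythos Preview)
Your argument is correct. The paper, however, does not give a proof of this proposition at all: it merely quotes it from St\"ohr--Voloch \cite[Proposition~1.7]{SV}, so there is no ``paper's own proof'' to compare against. What you have written is essentially the original St\"ohr--Voloch argument: compute the $P$-adic leading term of the classical Wronskian $\det\bigl(D_t^{(k)}g_i\bigr)$ after passing to an osculating basis, identify its coefficient with $\bigl(\prod_i c_i\bigr)\det\bigl(\tbinom{j_i(P)-e_P}{k}\bigr)$, and invoke the Vandermonde-type identity $\det\bigl(\tbinom{x_i}{k}\bigr)=\prod_{k<i}(x_i-x_k)/(i-k)$ to see that this coefficient is exactly the integer in the hypothesis (the shift by $e_P$ cancels in the differences). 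Your handling of the bookkeeping --- uniformizer as separating variable, possible negative exponents, and vanishing individual binomials --- is fine; none of these affects the leading coefficient of the determinant as a whole.
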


Now suppose that $\phi$ is defined over $\fq$. The sequence of non-negative integers $(\nu_0,\ldots,\nu_{n-1})$, chosen minimally in the lexicographic order, such that
\begin{equation}\label{fncl}
\left|
  \begin{array}{ccc}
  f_0^q & \ldots & f_n^q \\
  D_t^{(\nu_0)}f_0 & \ldots & D_t^{(\nu_0)}f_n \\
   \vdots & \cdots & \vdots \\
  D_t^{(\nu_{n-1})}f_0 & \cdots & D_t^{(\nu_{n-1})}f_n
  \end{array}
  \right| \neq 0,
  \end{equation}
where $t$ is a separating variable of $\fq(\xx)$, is called the $\fq$-Frobenius sequence of $\xx$ with respect to $\phi$. From \cite[Proposition 2.1]{SV}, we have that $\{\nu_0,\ldots,\nu_{n-1}\}=\{\epsilon_0,\ldots,\epsilon_n\}\backslash\{\epsilon_I\}$ for some $I \in \{1,\ldots,n\}$. If $(\nu_0,\ldots,\nu_{n-1})=(0,\ldots,n-1)$, then the curve $\xx$ is called $\fq$-Frobenius classical with respect to $\phi$. Otherwise, it  is called $\fq$-Frobenius nonclassical.

The following  result   \cite[Remark 8.52]{HKT} shows the close relation between classicality and $\fq$-Frobenius classicality.

\begin{prop}\label{fnc impl nc}
Let $\D$ be a linear series of the curve $\xx$, defined over $\fq$, such that $p>M:=dim(\D)$. If $\xx$ is $\fq$-Frobenius nonclassical with respect to $\D$, then $\xx$ is nonclassical with respect to $\D$.
\end{prop}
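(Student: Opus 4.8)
The plan is to prove the contrapositive: assuming $\xx$ is classical with respect to $\D$, I will show it is $\fq$-Frobenius classical with respect to $\D$. (One may assume $M\ge 2$, since for $M=1$ the Frobenius order sequence has length one and is necessarily $(0)$.) First I would pass to the morphism $\phi=(f_0:\cdots:f_M)\colon\xx\to\p^M$ associated with $\D$, taken over $\fq$; invoking the independence of the order and Frobenius order sequences from the choice of homogeneous representatives and of separating variable (recalled from \cite{SV} in Section~2), I may normalize $f_0=1$, write $g_j:=f_j$ for $j=1,\ldots,M$, and fix a separating variable $t$ of $\fq(\xx)$. Expanding $\det\bigl(D_t^{(i)}f_j\bigr)_{0\le i,j\le M}$ along the column of $f_0=1$, classicality of $\xx$ becomes equivalent to the linear independence over $\kk(\xx)$ of the $M$ vectors $v_k:=\bigl(D_t^{(k)}g_1,\ldots,D_t^{(k)}g_M\bigr)\in\kk(\xx)^M$, $k=1,\ldots,M$.

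Next, to establish $\fq$-Frobenius classicality it is enough to show the determinant in $(\ref{fncl})$ is nonzero for the sequence $(\nu_0,\ldots,\nu_{M-1})=(0,1,\ldots,M-1)$, this being the lexicographically least strictly increasing sequence of length $M$. Subtracting the row $\bigl(D_t^{(0)}f_j\bigr)_j$ from the Frobenius row $(f_j^q)_j$ and expanding along the column of $f_0=1$, I would rewrite this determinant as $\pm\det\bigl(w;v_1;\ldots;v_{M-1}\bigr)$, where $w:=\bigl(g_1^q-g_1,\ldots,g_M^q-g_M\bigr)$. So the whole problem reduces to the single claim that $w\notin\operatorname{span}_{\kk(\xx)}(v_1,\ldots,v_{M-1})$.

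The core of the argument is to prove that claim by contradiction. Assuming $w=\sum_{k=1}^{M-1}c_kv_k$ with $c_k\in\kk(\xx)$, I would apply the derivation $D_t^{(1)}$ componentwise to this identity, using the standard facts that $D_t^{(1)}$ annihilates $q$-th powers (whence $D_t^{(1)}w=-v_1$) and that $D_t^{(1)}D_t^{(k)}=(k+1)D_t^{(k+1)}$. Collecting terms gives
$$\bigl(1+D_t^{(1)}c_1\bigr)\,v_1\;+\;\sum_{k=2}^{M-1}\bigl(D_t^{(1)}c_k+k\,c_{k-1}\bigr)\,v_k\;+\;M\,c_{M-1}\,v_M\;=\;0.$$
Linear independence of $v_1,\ldots,v_M$ forces every coefficient to vanish. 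Since $p>M$, each of $2,3,\ldots,M$ is invertible in $\kk$, so $M\,c_{M-1}=0$ yields $c_{M-1}=0$, and then the vanishing of the coefficient of $v_k$ for $k=M-1,M-2,\ldots,2$ yields in turn $c_{M-2}=0,\ldots,c_1=0$; but then the coefficient of $v_1$ equals $1\ne 0$, a contradiction. Hence $(\ref{fncl})$ is nonzero for $(0,1,\ldots,M-1)$, so $\xx$ is $\fq$-Frobenius classical.

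I expect the substantive content to lie entirely in that descent; everything preceding it is bookkeeping, the one step deserving care being the appeal to \cite{SV} that the order and Frobenius order sequences are insensitive to the normalization $f_0=1$ and to the choice of $t$. The hypothesis $p>M$ is used exactly once, but decisively: it is what makes the integer coefficients $M,M-1,\ldots,2$ (and the constant $1$) nonzero in characteristic $p$, which is precisely the engine of the descent and of the final contradiction $1=0$.
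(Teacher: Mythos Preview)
Your argument is correct. The contrapositive reduction, the normalization $f_0=1$, the identification of classicality with the linear independence of $v_1,\ldots,v_M$, and the descent via applying the derivation $D_t^{(1)}$ together with $D_t^{(1)}D_t^{(k)}=(k+1)D_t^{(k+1)}$ all go through as you describe; the hypothesis $p>M$ is used exactly where you say.

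As for comparison: the paper does not supply its own proof of this proposition. It merely records the statement and cites \cite[Remark 8.52]{HKT}. So your write-up is not a variant of the paper's argument but a self-contained substitute for an external reference. What you gain is that the reader need not consult \cite{HKT}; the cost is only the short computation you give. One cosmetic remark: the phrase ``invoking the independence of the order and Frobenius order sequences from the choice of homogeneous representatives'' is doing real work (it is what justifies replacing $(f_0:\cdots:f_M)$ by $(1:f_1/f_0:\cdots:f_M/f_0)$ without disturbing either sequence), so you might point explicitly to \cite[Propositions~1.4 and~2.1]{SV} rather than leaving it as a parenthetical.
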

 
If  $\xx \subseteq \P^n(\kk)$, the $\fq$-Frobenius map $\Phi_q$ is defined on $\xx$ by
\begin{displaymath}
\begin{array}{cccc}
\Phi_q: &\xx & \lra & \xx \\
        &(a_0:\ldots:a_n) & \longmapsto & (a_0^q:\ldots:a_n^q). 
\end{array}
\end{displaymath}
Note that if $\xx$ is a plane curve, then by (\ref{fncl}) and \cite[Corollary 1.3]{SV}, we have that $\xx$ is $\fq$-Frobenius nonclassical with respect to the linear system of lines if  and only if $\Phi_q(P)$ lies on the tangent line of $\xx$ at $P$ for all $P \in \xx$.

Now let $F(x,y,z) \in \fq[x,y,z]$ be a homogeneous, irreducible polynomial of degree $d$ such that 
$$\xx:F(x,y,z)=0$$
 is a nonsingular projective plane curve. The function field $\kk(\xx)$ is given by $\kk(x,y)$, where $x$ and $y$ are such that $F(x,y,1)=0$. For each $s \in \{1,\ldots,d-1\}$,  consider the Veronese morphism 
$$
\phi_s=(1:x:y:x^2:\ldots:x^{i}y^{j}:\ldots:y^s): \xx \lra \P^M(\kk),
$$
where $i+j\leq s$. It is well known that the linear series $\D_s$ associated with $\phi_s$ is base-point-free of degree $sd$ and dimension $M={s+2 \choose 2}-1=(s^2+3s)/2$. The linear series $\D_s$ is also obtained by the cut out on $\xx$ by the linear system of plane curves of degree $s$.

For  any $P \in \xx$,  a   $(\D_s,P)$-order $j:=j(P)$ can be seen as the intersection multiplicity  at $P$ of $\xx$ with some plane curve of degree $s$. That is, the integers $j_0(P)<\cdots <j_M(P)$ represent   the possible intersection multiplicities of a plane curve of degree $s$ with $\xx$ at $P$.  Moreover, by \cite[Theorem 1.1]{SV}, there is a unique curve $\hh_P^s$ of degree $s$, called $s$-osculating curve of $\xx$ at $P$, such that
$$I(P,\xx \cap \hh_P^s)=j_M(P).$$


\section{$\fq$-Frobenius nonclassical curves}\label{The general case}

Let us recall that $\xx: F(x,y,z)=0$ is a smooth, projective plane curve of degree $sn$, defined over $\fq$, where $F$ is given by 
\begin{equation}\label{x-again}
F(x,y,z)=\displaystyle\sum_{ i+j+t = s}c_{ij}x^{in}y^{jn}z^{tn},
\end{equation}
with $s \geq 1$ and $n \geq 2$. This section establishes sufficient conditions for the curve 
$\xx$ to be $\F_q$-Frobenius nonclassical with respect to $\mathcal{D}_s$. Note that  the case $s=1$ addresses the $\F_q$-Frobenius nonclassicality, with respect to lines,  of Fermat curves
\begin{equation}\label{fermatGV}
\xx: ax^n+by^n+cz^n=0.
\end{equation}
However, for $p\neq 2$, it is a  well known result by Garcia and Voloch \cite[Theorem 2]{GV2})  that the curve (\ref{fermatGV})  is  $\F_q$-Frobenius nonclassical, with respect to lines, if and only if
$n=\frac{p^h-1}{p^v-1}$, and  the curve is defined over $\F_{p^v}$, where $q=p^h$, $v>h$ and  $v|h$. For an alternative proof  including the case $p=2$, see \cite{Bo3}.
 
 Henceforth, we consider a smooth curve $\xx$ associated to $(\ref{x-again})$ with the following assumptions:
\begin{enumerate}
\item[(3.i)] $s\geq 2$
\item[(3.ii)] $p |n-1$
\item[(3.iii)] $p>5$ for $s=2$, and $p>s^2$  for $s\geq 3$ (in particular, $p>M:=\dim \mathcal{D}_s$).
\end{enumerate}

The following  result will be a key ingredient in our approach. It  is proved in \cite[Lemma 1.3.8]{Na} and \cite[Lemma A.2]{Ha},  for curves in characteristics $p=0$ and  $p\geq0 $, respectively. 
\begin{lem}\label{lema4}
Let $\ff$, $\G$ and $\hh$ be three plane curves. If  $\ff$ is nonsingular, then 
 $$I(P,\hh \cap \G) \geq \min \{I(P,\ff \cap \G),I(P,\ff \cap \hh)\}$$
for all $P \in\ff$.
\end{lem}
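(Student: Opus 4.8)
The plan is to reduce the statement to a computation in the local ring $\OO:=\OO_{\p^2,P}$ of the plane at $P$, using that the smoothness of $\ff$ at $P$ makes $\OO_{\ff,P}$ a discrete valuation ring. Fix local equations $f,g,h\in\OO$ for $\ff,\G,\hh$, so that each intersection number in the statement equals the $\kk$-dimension of the quotient of $\OO$ by the ideal generated by the two relevant equations. First I would dispose of the degenerate cases. If $g$ and $h$ have a common factor vanishing at $P$, then $I(P,\hh\cap\G)=\infty$ and there is nothing to prove. If $f\mid g$ (i.e.\ $\ff$ is a component of $\G$), then $(g,h)\subseteq(f,h)$, hence $I(P,\hh\cap\G)=\dim_\kk\OO/(g,h)\ge\dim_\kk\OO/(f,h)=I(P,\ff\cap\hh)\ge\min\{I(P,\ff\cap\G),I(P,\ff\cap\hh)\}$; the case $f\mid h$ is symmetric. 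So we may assume $f\nmid g$, $f\nmid h$, and that $g,h$ share no component through $P$.

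Set $R:=\OO/(f)=\OO_{\ff,P}$, which is a DVR since $\ff$ is smooth at $P$; let $v_P$ be its valuation and $\bar g,\bar h$ the images of $g,h$, both nonzero by the previous reduction. By the standard description of intersection multiplicity with a smooth curve, $a:=I(P,\ff\cap\G)=v_P(\bar g)$ and $b:=I(P,\ff\cap\hh)=v_P(\bar h)$, both finite; relabelling $\G$ and $\hh$ if necessary, assume $a\le b$, so that the right-hand side of the desired inequality equals $a$. Now pass to the quotient by all three equations: $\OO/(f,g,h)\cong R/(\bar g,\bar h)$, and since $R$ is a DVR with $v_P(\bar h)=b\ge a=v_P(\bar g)$ we have $(\bar g,\bar h)=(\bar g)$, so $R/(\bar g,\bar h)=R/(\bar g)$ is a $\kk$-vector space of dimension $a$. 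Composing with the surjection $\OO/(g,h)\twoheadrightarrow\OO/(f,g,h)$ gives
$$I(P,\hh\cap\G)=\dim_\kk\OO/(g,h)\ \ge\ \dim_\kk\OO/(f,g,h)=a=\min\{I(P,\ff\cap\G),I(P,\ff\cap\hh)\},$$
which is the claim.

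The argument is characteristic-free, so the arbitrary-characteristic version costs nothing extra; honestly the lemma has little depth once one sees that $\OO/(f,g,h)\cong R/(\bar g)$ is the right object, and the only mildly delicate point is the bookkeeping of the degenerate cases, which is exactly where the infinite values of the intersection numbers occur. An equivalent, more hands-on route is to choose local coordinates so that $\ff$ is $\{y=0\}$ with $P$ the origin, subtract a suitable element of $\OO$ times $g$ from $h$ to make the result divisible by $y$ — legitimate because $I(P,\hh\cap\G)$ depends only on the ideal $(g,h)$ — and then invoke additivity of intersection multiplicity together with $I(P,\ff\cap\G)=\operatorname{ord}_x g(x,0)$; I would present the DVR version as the cleaner of the two.
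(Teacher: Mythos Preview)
Your argument is correct. The reduction to the local ring, the handling of the degenerate cases, and the key identification $\OO/(f,g,h)\cong R/(\bar g)$ via the DVR structure of $R=\OO_{\ff,P}$ are all sound; the surjection $\OO/(g,h)\twoheadrightarrow\OO/(f,g,h)$ then gives the inequality immediately.

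As for comparison: the paper does not supply its own proof of this lemma. It simply cites Namba \cite[Lemma~1.3.8]{Na} for characteristic zero and Homma \cite[Lemma~A.2]{Ha} for arbitrary characteristic. Your write-up therefore goes further than the paper itself, giving a self-contained, characteristic-free argument. The DVR approach you chose is essentially the natural one and is very likely what appears in Homma's appendix, so there is no genuine methodological divergence to report; you have just made explicit what the paper leaves to a reference. One minor remark: your parenthetical ``i.e.\ $\ff$ is a component of $\G$'' for the condition $f\mid g$ in $\OO$ is justified because $\ff$, being nonsingular, is irreducible, hence $f$ is irreducible in the UFD $\OO$; you might say this in one clause to preempt a picky reader.
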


\begin{lem}\label{lema5}
For all points $P=(a:b:c)\in \xx$ such that $abc \neq 0$, the $s$-osculating curve $\hh_P^{s}$ to $\xx$ at $P$ is an irreducible curve given by the equation $H_P(x,y,z)=0$, where
\begin{eqnarray}\label{curvaHp}
H_P(x,y,z)=\displaystyle\sum_{i+j+t = s}c_{ij}(a^{im}b^{jm}c^{tm})^{p^v}x^iy^jz^t,
\end{eqnarray}
$n=mp^v+1$, and gcd$(p,m)=1$. Furthermore, the curve $\xx$ is nonclassical with respect to $\D_s$ but classical with respect to $\D_{i}$, $1 \leq i \leq s-1$.
\end{lem}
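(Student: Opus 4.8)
The plan is to guess the osculating curve explicitly and then verify the required intersection multiplicity using Lemma \ref{lema4}. Write $n=mp^v+1$ with $\gcd(p,m)=1$, which is possible since $p\mid n-1$ by assumption (3.ii). The natural candidate comes from noticing that modulo $p^v$-th powers the defining polynomial $F$ looks like a degree-$s$ polynomial: set
$$H_P(x,y,z)=\sum_{i+j+t=s}c_{ij}\bigl(a^{im}b^{jm}c^{tm}\bigr)^{p^v}x^iy^jz^t.$$
Since $abc\neq0$ and $P\in\xx$, one checks $H_P(a,b,c)=\bigl(\sum c_{ij}a^{in}b^{jn}c^{tn}\bigr)^{p^v}=F(a,b,c)^{p^v}=0$, so $P\in\hh_P^s$. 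The first key step is to show $I(P,\xx\cap\hh_P^s)$ is as large as the maximal $(\D_s,P)$-order $j_M(P)$; because $\hh_P^s$ has degree $s$, any lower bound on this intersection number forcing it above $j_{M-1}(P)$ identifies $\hh_P^s$ as the unique $s$-osculating curve by \cite[Theorem 1.1]{SV}. To get such a lower bound, consider the auxiliary curve $\G: G(x,y,z)=0$ with
$$G(x,y,z)=F(x,y,z)-x^{s(n-1)-?}\cdots$$
— more precisely, observe that $x^{s}y^{s}z^{s}\cdot$ (something) relates $F$ and the "$p^v$-th power substitution" of $H_P$; the cleanest route is: the polynomial $F(x,y,z)$ and the polynomial $x^{m'p^v}\cdots$ obtained by replacing each monomial $x^{in}y^{jn}z^{tn}$ in $F$ with $(a^{im}b^{jm}c^{tm}x^iy^jz^t)^{p^v}$ agree to high order along $\xx$ near $P$, because on $\xx$ one has $x^n/a^n = (x/a)^{mp^v}\cdot(x/a)$, etc.

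Concretely, the second step is to introduce the curve $\G$ defined by $G = \sum_{i+j+t=s} c_{ij}\, a^{imp^v}b^{jmp^v}c^{tmp^v}\, x^{i+imp^v}y^{j+jmp^v}z^{t+tmp^v}/(\text{normalization})$ — i.e. the curve cut out by the $p^v$-th power of the linear-in-$(x^m,y^m,z^m)$ form, suitably homogenized to degree $sn$ — and to compare it simultaneously with $\xx$ and with the degree-$s$ curve $\hh_P^s$. Applying Lemma \ref{lema4} with $\ff=\xx$ (nonsingular!), first to the pair $(\hh_P^s,\G)$ — using that $\G$ is, up to factors nonvanishing at $P$, the pullback of $\hh_P^s$ under the Frobenius-type map $(x:y:z)\mapsto(x^{p^v}:y^{p^v}:z^{p^v})$, so $I(P,\xx\cap\G)=p^v\, I(P',\xx'\cap\hh_P^s)$ or similar — and then to the pair $(\xx,\G)$ by a direct valuation computation showing $I(P,\xx\cap\G)\ge sn - (\text{small})$, yields $I(P,\xx\cap\hh_P^s)\ge \min\{\dots\}$ large enough. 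The point is that $\D_s$ has dimension $M=(s^2+3s)/2$ and the generic maximal order $\epsilon_M\le \deg\D_s = sn$ with the non-maximal orders bounded by $s(s+1)/2$-type quantities (Plücker/arithmetic-genus considerations), so any intersection number exceeding $j_{M-1}(P)$ pins down the osculating curve.

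For the irreducibility of $\hh_P^s$: since $p>s^2\ge M$ (assumption (3.iii)), and since $\xx$ is nonclassical with respect to $\D_s$, Proposition \ref{SV1.7} (contrapositive) gives that $\prod_{i>r}\frac{j_i(P)-j_r(P)}{i-r}$ is divisible by $p$ at a generic point, which combined with $p>M$ forces a gap in the order sequence of size a multiple of $p$; this is exactly what produces the $p^v$-th power structure of $H_P$ and simultaneously shows $H_P$ is a genuine $p^v$-th power of an irreducible form (a linear form $\alpha x^m+\beta y^m+\gamma z^m$ is irreducible when $\gcd(p,m)=1$ and the $\alpha,\beta,\gamma$ are nonzero — here they are, since $abc\neq0$ and the $c_{ij}$ for $i+j+t=s$ include the "corner" coefficients which are nonzero by smoothness of $\xx$). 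Actually $H_P$ as written is the $p^v$-th power of $\sum c_{ij}^{1/p^v}a^{im}b^{jm}c^{tm}x^{i/?}\dots$ only when $s$ itself has the right form; more carefully, $H_P(x,y,z)=\bigl(\sum_{i+j+t=s}c_{ij}^{1/p^v}(a^ib^jc^t)^{m}\,\tilde x^i\tilde y^j\tilde z^t\bigr)$ evaluated appropriately — I will argue irreducibility by showing directly that $H_P(x,y,z)=0$ defines a curve projectively equivalent to $\xx$ 's own "model" $\sum c_{ij}X^iY^jZ^t=0$, which is smooth (as $\xx$ is smooth of degree $sn$ forces the degree-$s$ shape to be smooth after the $n$-th power substitution) and hence irreducible. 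Finally, classicality with respect to $\D_i$ for $1\le i\le s-1$ follows because, by the same order-sequence analysis, the generic order sequence with respect to $\D_i$ has no $p$-divisible gaps (the relevant products in Proposition \ref{SV1.7} are prime to $p$ since $p>s^2>i^2$), so $\xx$ is classical there.

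The main obstacle I anticipate is the intersection-multiplicity bookkeeping in the second step: precisely controlling $I(P,\xx\cap\G)$ and the "Frobenius-pullback" identity $I(P,\xx\cap\G)=p^v\cdot(\text{order of }\hh_P^s)$ requires choosing the right separating variable (say $x/z$, valid since $abc\ne 0$) and carefully tracking how the substitution $u\mapsto u^{p^v}$ multiplies valuations, while keeping the homogenization degrees matched at $sn$. Getting the constant in "$I(P,\xx\cap\G)\ge sn-\text{something}$" sharp enough to beat $j_{M-1}(P)$ — rather than just $\ge j_M(P)$ up to an ambiguity — is where the hypotheses $p>5$ (resp. $p>s^2$) will actually be consumed.
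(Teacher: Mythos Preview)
Your proposal has the right instincts in places (the irreducibility of $H_P$ via projective equivalence to $\sum c_{ij}X^iY^jZ^t=0$ is exactly what the paper does), but the central argument is overcomplicated and leaves genuine gaps.

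\textbf{The main gap: the intersection-multiplicity step.} You never need an auxiliary curve $\G$ or any ``Frobenius pullback'' identity. The paper's observation is that in the function field $\kk(\xx)$, where $f(x,y):=F(x,y,1)=0$, one has
\[
h(x,y):=H_P(x,y,1)=h(x,y)-f(x,y)=\sum_{0\le i+j\le s}c_{ij}\bigl(a^{im}b^{jm}-x^{im}y^{jm}\bigr)^{p^v}x^iy^j,
\]
and each term visibly has valuation $\ge p^v$ at $P$. So $I(P,\xx\cap\cc)\ge p^v$ for the candidate curve $\cc:H_P=0$, \emph{directly}, in one line. Separately, the relation $f(x,y)=\sum c_{ij}(x^{im}y^{jm})^{p^v}x^iy^j=0$ already shows (via Theorem~\ref{GV1-1}) that the functions $x^iy^j$ are dependent over $(\kk(\xx))_v$, hence $\epsilon_M\ge p^v$, hence $I(P,\xx\cap\hh_P^s)=j_M(P)\ge p^v$ by (\ref{ej}). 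Now Lemma~\ref{lema4} with $\ff=\xx$ gives $I(P,\cc\cap\hh_P^s)\ge p^v>s^2$, and B\'ezout plus irreducibility of $\cc$ forces $\cc=\hh_P^s$. Your detour through $\G$ and through bounding $j_{M-1}(P)$ is unnecessary, and the latter is not actually carried out: you never establish any control on $j_{M-1}(P)$.

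\textbf{The classicality argument for $\D_{s-1}$ is wrong as stated.} Invoking Proposition~\ref{SV1.7} requires exhibiting a point where the full $(\D_{s-1},P)$-order sequence is known and the product is prime to $p$; you do not compute any such sequence, and ``$p>s^2$'' alone does not make those products coprime to $p$. The paper instead argues by contradiction: if $\xx$ were nonclassical for $\D_{s-1}$, then $I(P,\xx\cap\hh_P^{s-1})\ge p$ (by \cite[Corollary~1.9]{SV}), and Lemma~\ref{lema4} gives $I(P,\hh_P^{s-1}\cap\hh_P^s)\ge p>s(s-1)$, so B\'ezout forces a common component, contradicting the irreducibility of $\hh_P^s$ just established. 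This is where the hypothesis $p>s^2$ is actually consumed.
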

\begin{proof}
Set $f(x,y):=F(x,y,1)$, and note that $f(x,y)=0$ can be written as
\begin{eqnarray}\label{ld}
\displaystyle\sum_{0 \leq i+j \leq s}c_{ij}(x^{im}y^{jm})^{p^v}x^iy^j=0 \in \kk(\xx).
\end{eqnarray}
Therefore, if  $(\epsilon_0,\epsilon_1,\ldots,\epsilon_{M})$ is the $\D_s$-order sequence of $\xx$, then Theorem \ref{GV1-1} implies   $\epsilon_M \geq p^v>M$. Thus  $\xx$ is nonclassical with respect to $\D_s$.  Let $P=(a:b:c)$ be a point of $\xx$, with $abc \neq 0$, and  consider the curve 
$$\cc:H_P(x,y,z)=0$$ of degree $s$ (cf. (\ref{curvaHp})). We claim that $\cc$ is irreducible. To see this,
consider the polynomial $G(x,y,z):=\displaystyle\sum_{i+j+t = s}c_{ij}x^iy^jz^t,$ and note that $$G(a^{mp^v}x,b^{mp^v}y,c^{mp^v}z)=H_P(x,y,z).$$
 Therefore, we  need  only  prove that
the polynomial $G(x,y,z)$ is irreducible. But this follows immediately from the fact $\xx$ is irreducible and  $F(x,y,z)=G(x^n,y^n,z^n)$. We may assume that $P=(a:b:1)$, and then for $h(x,y):=H_P(x,y,1)$,  we have that $h(x,y)=h(x,y)-f(x,y) \in \kk(\xx)$ can be written as
\begin{equation}
h(x,y)=\displaystyle\sum_{0 \leq i+j \leq s}c_{ij}(a^{im}b^{jm}-x^{im}y^{jm})^{p^v}x^iy^j.
\end{equation}
Therefore, $v_P(h(x,y)) \geq p^v$, and then $I(P,\xx \cap \cc) \geq p^v$. Let $\hh_P^{s}$ be the $s$-osculating curve to $\xx$ at $P$. Since $\epsilon_M \geq p^v$, it follows from (\ref{ej}) that 
$$I(P,\xx \cap \hh_P^{s})=j_M(P) \geq p^v .$$  
Thus from Lemma \ref{lema4}, we have $I(P,\cc \cap \hh_P^{s}) \geq p^v$. As we are assuming that $p>s^2$, we have that 
$$I(P,\cc \cap \hh_P^{s}) > s^2=deg(\cc)\cdot deg(\hh_P^{s}).$$
Therefore by B\'ezout's Theorem, the curves $\cc$ and $\hh_P^s$ have a common component. However, since $\cc$ is irreducible and $deg(\cc)=deg(\hh_P^{s})$, it follows that $\cc=\hh_P^{s}$. In particular,  the $s$-osculating curve $\hh_P^{s}$ is irreducible.

For the  lemma's  last statement,  it suffices to prove classicality with respect to $\D_{s-1}$. Suppose that $\xx$ is nonclassical with respect to $\D_{s-1}$. Then by \cite[Corollary 1.9]{SV}, the intersection multiplicity of $\xx$ with the $(s-1)$-osculating curve $\hh_P^{s-1}$ to $\xx$ at any point $P \in \xx$ is $I(P,\xx \cap \hh_P^{s-1}) \geq p$. By Lemma \ref{lema4},  
$$I(P,\hh_P^{s-1} \cap \hh_P^{s}) \geq p >s^2>s(s-1)=deg(\hh_P^{s})\cdot deg(\hh_P^{s-1}),$$ and thus  B\'ezout's Theorem implies that  $\hh_P^{s}$ and $\hh_P^{s-1}$ have a common component. Since this contradicts  the irreducibility of $\hh_P^{s}$, the result follows.
\end{proof}
Next we give the main result of the section.
\begin{thm}\label{teo6}
Let $\hh_P^{s}$ be the $s$-osculating curve to $\xx$ at $P$. Then $\Phi_q(P) \in \hh_P^{s}$ for infinitely many points $P \in \xx$ if  and only if $n=(p^h-1)/(p^v-1)$, and $\xx$ is defined over $\F_{p^v}$, where $q=p^h$, $h>v$ and $v|h$.
\end{thm}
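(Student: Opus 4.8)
The plan is to convert the geometric condition into a single polynomial divisibility and analyze that. Put $G(X,Y,Z):=\sum_{i+j+t=s}c_{ij}X^iY^jZ^t$, so that $F(X,Y,Z)=G(X^n,Y^n,Z^n)$, and set $N:=n+q-1$. By Lemma~\ref{lema5}, for every $P=(a:b:c)\in\xx$ with $abc\neq0$ the osculating curve $\hh_P^s$ is the curve $H_P=0$, and using $n-1=mp^v$ one checks that $\Phi_q(P)=(a^q:b^q:c^q)$ lies on $\hh_P^s$ exactly when $G(a^N,b^N,c^N)=0$. Since this is a homogeneous condition on $(a:b:c)$, the points of $\xx$ with $abc=0$ are finite in number, and $F$ is irreducible, the statement ``$\Phi_q(P)\in\hh_P^s$ for infinitely many $P$'' is equivalent to ``$F\mid G(X^N,Y^N,Z^N)$ in $\kk[X,Y,Z]$''; and the latter in fact forces $\Phi_q(P)\in\hh_P^s$ for \emph{all} $P$ off the coordinate triangle. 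It therefore suffices to prove: $F\mid G(X^N,Y^N,Z^N)$ if and only if $n=(p^h-1)/(p^v-1)$ with $v\mid h$, $h>v$, and (up to rescaling $F$) $c_{ij}\in\F_{p^v}$.

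The ``if'' direction is a computation. If $n=(p^h-1)/(p^v-1)$ then $n(p^v-1)=q-1$, so $N=n+q-1=np^v$; and if $c_{ij}\in\F_{p^v}$ then $c_{ij}^{1/p^v}=c_{ij}$, so the Frobenius identity $\sum c_{ij}A^{ip^v}B^{jp^v}C^{tp^v}=\bigl(\sum c_{ij}A^iB^jC^t\bigr)^{p^v}$ with $A=X^n$, $B=Y^n$, $C=Z^n$ gives $G(X^N,Y^N,Z^N)=G(X^n,Y^n,Z^n)^{p^v}=F^{p^v}$, whence $F\mid G(X^N,Y^N,Z^N)$.

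For the converse, assume $F\mid G(X^N,Y^N,Z^N)$. Since $p\mid n-1$ and $p\mid q$, we have $p\mid N$; write $N=p^\ell N_0$ with $p\nmid N_0$. Then $G(X^N,Y^N,Z^N)=\tilde G(X^{N_0},Y^{N_0},Z^{N_0})^{p^\ell}$, where $\tilde G$ has coefficients $c_{ij}^{1/p^\ell}$, and since $F$ is prime this yields $F\mid\tilde G(X^{N_0},Y^{N_0},Z^{N_0})$. The core is to upgrade this to $N_0=n$ and $\tilde G=\lambda G$ for some $\lambda\in\kk^\ast$. I would proceed as follows. First, $n\mid q-1$: writing $e=\gcd(n,q-1)$ and $n=en''$, one specializes the function-field identity $G(x^N,y^N,z^N)=0$ in $\kk(\xx)$ at a generic value of $x$ and expands over the $n$-th roots of $y^n$; grouping terms shows $(T^{n''}-1)$ divides a polynomial of degree $\le s$, which — since smoothness of $\xx$ forces the corner coefficients $c_{s,0,0},c_{0,s,0},c_{0,0,s}$ of $G$ to be nonzero — is impossible unless $n''=1$. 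With $q-1=nk$ one has $N=n(k+1)$, and because $\kk[X,Y,Z]$ is free over $\kk[X^n,Y^n,Z^n]$ the divisibility descends to $G\mid G(X^{k+1},Y^{k+1},Z^{k+1})$, i.e.\ $\psi_{k+1}(V(G))\subseteq V(G)$ for $\psi_{k+1}\colon(X:Y:Z)\mapsto(X^{k+1}:Y^{k+1}:Z^{k+1})$. Finally, writing $k+1=p^a m'$ with $p\nmid m'$ gives $G\mid\tilde G(X^{m'},Y^{m'},Z^{m'})$; the translates $G(\zeta_1X,\zeta_2Y,\zeta_3Z)$ by $m'$-th roots of unity are distinct irreducible factors of the right-hand side, so their number is at most $m'$, which forces the translation-stabilizer of $G$ to have order $\ge m'^2$; comparing this with the constraints imposed by the nonvanishing corner coefficients and using $p>s^2$ forces $m'=1$. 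Hence $k+1=p^v$ (put $v=a$), so $q-1=n(p^v-1)$, i.e.\ $n=(p^h-1)/(p^v-1)$ with $v\mid h$, $v<h$; and $G\mid\tilde G$ with $\deg\tilde G=\deg G$ gives $\tilde G=\lambda G$, i.e.\ $c_{ij}^{1/p^v}=\lambda c_{ij}$, so $c_{ij}^{p^v-1}$ is independent of $(i,j)$ and rescaling $F$ puts every $c_{ij}$ in $\F_{p^v}$.

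The hard part is this ``core'' step: translating the bare divisibility $F\mid\tilde G(X^{N_0},Y^{N_0},Z^{N_0})$ — in which a priori $\deg\tilde G(X^{N_0},\dots)=sN_0$ can far exceed $\deg F=sn$ — into the rigid conclusion $N_0=n$, $\tilde G\propto G$. That is precisely where irreducibility of $F$, smoothness of $\xx$ (which pins down exactly which coefficients of $G$ are forced nonzero), and the numerical hypothesis $p>s^2$ all enter; the subcases $\gcd(n,q-1)<n$ and ``$G$ is a power-substituted form'' are the delicate ones and need the B\'ezout/torus-symmetry bookkeeping indicated above. By contrast, the reduction in the first paragraph and the ``if'' direction are routine.
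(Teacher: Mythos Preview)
Your reduction of the geometric statement to the divisibility $F\mid G(X^N,Y^N,Z^N)$ (with $N=n+q-1$) is correct and matches the paper, as does your proof of the ``if'' direction. The converse is where your proposal departs from the paper, and the ``core step'' you describe does not close.

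The paper's argument for the converse is short and rests on an observation you do not use. With $v$ fixed by $n-1=mp^v$ and $\gcd(p,m)=1$, a one-line degree comparison shows $v<h$; then $p^v\mid N$ and
\[
G(X^N,Y^N,Z^N)=r(X,Y,Z)^{p^v},\qquad r=\sum_{i+j+t=s} c_{ij}^{1/p^v}X^{iN_0}Y^{jN_0}Z^{tN_0},\quad N_0=m+p^{h-v}.
\]
The key point is that the plane curve $\mathcal R:r=0$ is \emph{smooth}: given a singular point $(a:b:c)$ of $\mathcal R$, any choice of $n$-th roots $\alpha,\beta,\gamma$ of $a^{N},b^{N},c^{N}$ produces a singular point $(\alpha:\beta:\gamma)$ of $\xx$. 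Smoothness forces $r$ irreducible, so $F\mid r$ yields $f=\alpha\, r$; comparing degrees gives $N_0=n$, i.e.\ $n(p^v-1)=p^h-1$, and comparing coefficients gives $c_{ij}=\alpha\,c_{ij}^{1/p^v}$, whence $c_{ij}/c_{kl}\in\F_{p^v}$. No separate proof that $n\mid q-1$, and no torus-orbit bookkeeping, is needed.

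Your alternative route --- first establish $n\mid q-1$, descend to $G\mid G(X^{k+1},\ldots)$, write $k+1=p^a m'$, and exclude $m'>1$ via the $(\mu_{m'})^3$-action --- has two real gaps. First, the argument for $n\mid q-1$ (``specialize at a generic $x$ and expand over $n$-th roots of $y^n$'') is not spelled out enough to be checkable. Second, and more seriously, the translate-counting bound you indicate gives only $|\mathrm{Stab}(G)|\ge m'$, while the nonvanishing of the corner coefficients gives $|\mathrm{Stab}(G)|\le\gcd(s,m')^2$; together these yield merely $m'\le s^2$, not $m'=1$. The hypothesis $p>s^2$ is of no help here, since $m'$ is coprime to $p$ by construction. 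One can try to push further (e.g.\ via Riemann--Hurwitz for the induced cover $\{G=0\}\to\{\tilde G=0\}$ when $s\ge4$), but this is not what you wrote, and low $s$ would still need ad hoc treatment. The single idea that renders all of this unnecessary --- smoothness, hence irreducibility, of the intermediate curve $\mathcal R$ inherited from smoothness of $\xx$ --- is precisely what is missing from your proposal.
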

\begin{proof}
Since $p|n-1$, we have that $n=mp^v+1$ for some positive integers $v,m$, where gcd$(p,m)=1$. Suppose that $\Phi_q(P) \in \hh_P^{s}$ for infinitely many points $P \in \xx$. By Lemma \ref{lema5}, this means that the function
\begin{eqnarray}\label{nula}
g(x,y):=\displaystyle\sum_{0 \leq i+j \leq s}c_{ij}(x^{im}y^{jm})^{p^v}x^{iq}y^{jq} \in \kk(\xx)
\end{eqnarray}
is zero, i.e.,  the polynomial $f(x,y):=F(x,y,1)$ divides $g(x,y)$. Since $mp^v+q=n+q-1$, the polynomial $g(x,y)$ can be written as
\begin{equation}\label{polyg}
g(x,y)=\displaystyle\sum_{0 \leq i+j \leq s}c_{ij}x^{i(n+q-1)}y^{j(n+q-1)}.
\end{equation}

Note that $g(x,y)$ is a nonzero polynomial of degree $s(n+q-1)$. Also, it is easy to see that $p^v<q=p^h$, i.e., $v<h$.  Indeed, if $p^v\geq q$, then (\ref{nula}) gives $g(x,y)=l(x,y)^{q}$ where $l(x,y)$ is a polynomial of degree
 $s(n+q-1)/q$. This implies that $f(x,y)$ divides $l(x,y)$, and then
  $$sn=\deg f(x,y)\leq \deg l(x,y)=s(n+q-1)/q,$$
which is impossible for $n>1$.
 
  Therefore, $n+q-1$ is divisible by $p^v$, and then (\ref{polyg}) gives $g(x,y)=r(x,y)^{p^v}$, where
$$
r(x,y)=\displaystyle\sum_{0 \leq i+j \leq s}c_{ij}^{1/p^v}x^{i(m+p^{h-v})}y^{j(m+p^{h-v})}.
$$
 Furthermore, $f(x,y)|r(x,y)$. Now we claim that $r(x,y)$ is irreducible. To see this, let $\mathcal{R}$ be 
  the projective closure  of the curve  $r(x,y)=0$. One can easily  check that if $P=(a:b:c) \in \mathcal{R}$ is a singular point, and $\alpha,\beta,\gamma \in \kk$ are roots of
  $x^n=a^{(m+p^{h-v})p^v}$, $x^n=b^{(m+p^{h-v})p^v}$ and $x^n=c^{(m+p^{h-v})p^v}$, respectively, then $(\alpha:\beta:\gamma)$ is a singular point of $\xx$. However, since $\xx$ is smooth, the curve $\mathcal{R}$  must be smooth, and so $r(x,y)$ is irreducible. This implies   $f(x,y)=\alpha r(x,y)$ for some $\alpha \in \kk^{*}$. Now $\deg f(x,y)=\deg r(x,y)$ gives $n(p^v-1)=p^h-1$, as desired. In addition, $c_{ij}=\alpha {c_{ij}}^{1/p^v}$ for all $i,j$ implies that $c_{ij}/c_{kl} \in \F_{p^v}$ whenever $c_{kl} \neq 0$. That is, the curve  $\xx$ is defined over $\F_{p^v}.$ 

Conversely, suppose that  $n=(p^h-1)/(p^v-1)$, with $h>v$ and  $v|h$, and that $\xx$ is defined over  $\F_{p^v}$. We may assume that all coefficients $c_{ij}$ lie in  $\F_{p^v}$. From Lemma \ref{lema5}, it suffices to prove that  $f(x,y)|g(x,y)$, where $g(x,y)$ is given by (\ref{nula}). Note that  $n+q-1=np^v$, and then (\ref{polyg}) implies $g(x,y)=f(x,y)^{p^v}$, which completes the proof.
\end{proof}

\begin{cor}\label{cor7}
Suppose that $n=(p^h-1)/(p^v-1)$ and that $\xx$ is defined over $\F_{p^v}$, where $h>v$ and $v|h$. Then $\xx$ is $\fq$-Frobenius nonclassical with respect to $\D_s$.
\end{cor}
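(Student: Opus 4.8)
The plan is to deduce the statement from Theorem~\ref{teo6} together with the determinantal description of $\fq$-Frobenius classicality that underlies~(\ref{fncl}). Write $\phi_s=(f_0:\cdots:f_M)$ for the Veronese morphism attached to $\D_s$, so that $f_0,\dots,f_M$ are the monomials $x^iy^j$ with $0\le i+j\le s$ and $M=\binom{s+2}{2}-1$, and fix a separating variable $t$ of $\fq(\xx)$. By definition, $\xx$ is $\fq$-Frobenius classical with respect to $\D_s$ exactly when the sequence $(0,1,\dots,M-1)$ already makes the determinant appearing in~(\ref{fncl}) nonzero; write $\Delta\in\fq(\xx)$ for that determinant evaluated at $(\nu_0,\dots,\nu_{M-1})=(0,1,\dots,M-1)$. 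Thus it suffices to prove $\Delta\equiv 0$. By Theorem~\ref{teo6}, the hypotheses of the corollary yield $\Phi_q(P)\in\hh_P^{s}$ for infinitely many $P=(a:b:c)\in\xx$; since only finitely many points of $\xx$ lie on one of the lines $x=0,\ y=0,\ z=0$, are poles of $t$, or are zeros of $dt$, infinitely many of these $P$ satisfy $abc\neq 0$ and have $t-t(P)$ as a local uniformizer. I will show $\Delta(P)=0$ for each such $P$.

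Fix one such point $P$. By Lemma~\ref{lema5}, $\hh_P^{s}$ is the curve $H_P(x,y,z)=0$ with $H_P$ as in~(\ref{curvaHp}); let $\lambda=(\lambda_0,\dots,\lambda_M)$ be the vector of its coefficients in the monomial basis above, nonzero because $F\neq 0$, and observe that $\sum_k\lambda_k f_k=H_P(x,y,1)$ in $\kk(\xx)$. On the one hand, $\Phi_q(P)\in\hh_P^{s}$ means $H_P(a^q,b^q,c^q)=0$, which after dehomogenizing reads $\sum_k\lambda_k f_k(P)^q=0$; hence $\lambda$ is orthogonal to the first row of the matrix in~(\ref{fncl}) evaluated at $P$. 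On the other hand, the proof of Lemma~\ref{lema5} gives $v_P\!\left(\sum_k\lambda_k f_k\right)=I(P,\xx\cap\hh_P^{s})\ge p^{v}>M$, the last inequality by~(3.iii); since $\sum_k\lambda_k f_k$ is regular at $P$ and $t-t(P)$ is a uniformizer there, this valuation bound forces its first $M$ Hasse derivatives to vanish at $P$, i.e. $\sum_k\lambda_k D_t^{(r)}f_k(P)=0$ for $r=0,1,\dots,M-1$. Therefore $\lambda$ is orthogonal to every one of the remaining $M$ rows as well, so all $M+1$ rows lie in the hyperplane $\lambda^{\perp}\subset\kk^{\,M+1}$; being linearly dependent they yield $\Delta(P)=0$.

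Since $\Delta\in\fq(\xx)$ vanishes at infinitely many points, it vanishes identically. Consequently $(0,1,\dots,M-1)$ is not the $\fq$-Frobenius order sequence of $\xx$ with respect to $\D_s$, which is exactly the assertion that $\xx$ is $\fq$-Frobenius nonclassical with respect to $\D_s$. The only point demanding care is the bookkeeping around ``good'' points $P$: one must keep $P$ away, simultaneously, from the finite exceptional set of Theorem~\ref{teo6}, from the poles of $t$ and the zeros of $dt$, and from the coordinate lines, so that the elementary equivalence ``$v_P(g)\ge M$'' $\Longleftrightarrow$ ``$D_t^{(0)}g(P)=\cdots=D_t^{(M-1)}g(P)=0$'' is available; this is routine and poses no real obstacle. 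Equivalently, one may invoke the $\D_s$-analogue of the fact recorded for lines in Section~2 via~\cite[Corollary 1.3]{SV} --- namely that $\xx$ is $\fq$-Frobenius nonclassical with respect to $\D_s$ precisely when $\Phi_q(P)\in\hh_P^{s}$ for infinitely many (equivalently, all) $P\in\xx$ --- in which case the corollary follows at once from Theorem~\ref{teo6}.
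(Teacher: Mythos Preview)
Your proof is correct and follows essentially the same route as the paper: invoke Theorem~\ref{teo6} to get $\Phi_q(P)\in\hh_P^s$ for infinitely many $P$, then deduce that the relevant Frobenius determinant vanishes. The only cosmetic difference is that the paper applies \cite[Corollary~1.3]{SV} to the determinant whose derivative rows are indexed by the order sequence $(\epsilon_0,\dots,\epsilon_{M-1})$ and then uses $\epsilon_M\ge p^v>M-1$, whereas you work directly with the rows $(0,1,\dots,M-1)$ and make the linear-algebra argument at each good point explicit; you even mention the \cite[Corollary~1.3]{SV} shortcut at the end, which is exactly what the paper does.
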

\begin{proof}
By Theorem \ref{teo6},  $\Phi_q(P) \in \hh_P^{s}$ for infinitely many points $P \in \xx$. Hence, if $\tau$ is a separating variable of $\fq(\xx)$, by \cite[Corollary 1.3]{SV} 
\begin{displaymath}
 \left|
  \begin{array}{ccccc}
  1 & f_1^q &f_2^q&\cdots & f_M^q \\
  1 & f_1 & f_2& \cdots & f_M \\
  0 & D_\tau^{(\epsilon_1)}(f_1) & D_\tau^{(\epsilon_1)}(f_2)& \cdots  &D_\tau^{(\epsilon_1)}(f_M)\\
  \vdots & \vdots & \vdots & \cdots & \vdots \\
  0 & D_\tau^{(\epsilon_{M-1})}(f_1) & D_\tau^{(\epsilon_{M-1})}(f_2) & \cdots  & D_\tau^{(\epsilon_{M-1})}(f_M)\\
  \end{array}
  \right| = 0,
  \end{displaymath}
where $1,f_1,\ldots,f_M$ are the coordinate functions of the Veronese morphism $\phi_s$. Thus $\nu_i >\epsilon_i$ for some $i=1,\ldots,M-1$, and therefore $\xx$ is $\fq$-Frobenius nonclassical. 
\end{proof}

As  mentioned in the introduction,  the construction of plane curves of degree $d<q/15$ attaining  the bound (\ref{SV-1}) requires constructing $\fq$-Frobenius nonclassical curves with respect to $\D_2$. Next, we take advantage of our previous characterization to find  explicit examples illustrating this phenomenon. 

Suppose that, in addition to  our standard  hypotheses, the curve  $\xx: F(x,y,z)=0$ satisfies the hypotheses of Corollary \ref{cor7}.  In particular,  $\xx$ is $\fq$-Frobenius nonclassical with respect to $\D_s$. Let $\cc:G(x,y,z)=0$ be the  curve of degree $s$, defined over $\F_{p^v}$, where
\begin{equation}\label{curva aux}
G(x,y,z)=\displaystyle\sum_{i+j+t = s}c_{ij}x^iy^jz^t.
\end{equation}
Note that $F(x,y,z)=G(x^n,y^n,z^n)$ and that the smoothness of  $\xx$ implies that $\cc$ is smooth as well.

\begin{thm}\label{exemplo}
If  $N_{p^v}(\cc)=s(s+p^v-1)/2$,  and there is no $\F_{p^v}$-rational point $P=(a:b:c:) \in \cc$ where $abc=0$, then 
$$N_q(\xx)=d(d+q-1)/2,$$
where $q=p^h$ and $d=sn$. In particular, if $s=2$ and $p^v>31$, then $\xx$ is a curve of degree $d<q/15$ attaining the bound (\ref{SV-1}). 
\end{thm}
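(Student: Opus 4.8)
\emph{Plan.} The idea is to compute $N_q(\xx)$ exactly by exhibiting an explicit $n^{2}$-to-one surjection $\xx(\fq)\to\cc(\F_{p^v})$ and then invoking the hypothesis $N_{p^v}(\cc)=s(s+p^v-1)/2$. The one structural ingredient is that the $n$-th power map is a norm. Write $q=p^h$ and $k=h/v$; since $v\mid h$ and $h>v$, $k$ is an integer $\geq 2$ and $n=(p^h-1)/(p^v-1)=1+p^v+\cdots+p^{(k-1)v}$. Hence for every $a\in\fq^{*}=\F_{p^h}^{*}$ one has $a^{n}=N_{\F_{p^h}/\F_{p^v}}(a)\in\F_{p^v}^{*}$, and since the norm $\fq^{*}\to\F_{p^v}^{*}$ is surjective with kernel of order $(p^h-1)/(p^v-1)=n$, the map $x\mapsto x^{n}$ is an $n$-to-one surjection $\fq^{*}\to\F_{p^v}^{*}$ (and sends $0$ to $0$).

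Next I would reduce to points with no vanishing coordinate. Since $F(x,y,z)=G(x^n,y^n,z^n)$, every $(a:b:c)\in\xx(\fq)$ satisfies $G(a^n,b^n,c^n)=F(a,b,c)=0$ with $a^n,b^n,c^n\in\F_{p^v}$ not all zero, hence $(a^n:b^n:c^n)\in\cc(\F_{p^v})$; if one of $a,b,c$ vanished, this would be an $\F_{p^v}$-point of $\cc$ on a coordinate line, contradicting the hypothesis on $\cc$. Thus $\xx(\fq)$ consists exactly of the points $(a:b:1)$ with $a,b\in\fq^{*}$ and $F(a,b,1)=0$, and, likewise, every point of $\cc(\F_{p^v})$ has the form $(\alpha:\beta:1)$ with $\alpha,\beta\in\F_{p^v}^{*}$ and $G(\alpha,\beta,1)=0$.

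Now the counting step: the assignment $\pi\colon(a:b:1)\mapsto(a^{n}:b^{n}:1)$ is a well-defined map $\xx(\fq)\to\cc(\F_{p^v})$ because $G(a^n,b^n,1)=F(a,b,1)=0$; it is surjective, since for $(\alpha:\beta:1)\in\cc(\F_{p^v})$ one picks $a,b\in\fq^{*}$ with $a^{n}=\alpha$, $b^{n}=\beta$, and then $F(a,b,1)=G(\alpha,\beta,1)=0$; and every fibre has exactly $n^{2}$ elements. Consequently
$$N_q(\xx)=n^{2}\,N_{p^v}(\cc)=n^{2}\cdot\frac{s(s+p^v-1)}{2}=\frac{sn\bigl(sn+p^h-1\bigr)}{2}=\frac{d(d+q-1)}{2},$$
where the penultimate equality uses $n(p^v-1)=p^h-1$ (equivalently $n(s+p^v-1)=sn+p^h-1$) together with $d=sn$ and $q=p^h$. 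Since $\xx$ is classical with respect to $\D_1$ by Lemma \ref{lema5} and $p>\dim\D_1$, the contrapositive of Proposition \ref{fnc impl nc} shows $\xx$ is $\fq$-Frobenius classical with respect to lines, so $(\ref{SV-1})$ is a genuine upper bound for $N_q(\xx)$, which the displayed identity shows to be attained. For the last assertion, with $s=2$ and $p^v>31$ we get $p^v-1\geq31$, hence $q=p^h=n(p^v-1)+1\geq 31n+1>30n=15d$, i.e. $d<q/15$; and by Corollary \ref{cor7} this $\xx$ is $\fq$-Frobenius nonclassical with respect to $\D_2$, so the finer bound $(\ref{SV-2})$ does not apply to it, which is precisely what makes the example of interest.

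\emph{Main obstacle.} I do not expect a serious obstacle: once $x\mapsto x^{n}$ is recognised as the norm $\fq^{*}\to\F_{p^v}^{*}$, the rest is elementary bookkeeping. The only delicate point is the reduction to points with all coordinates nonzero — this is exactly what the hypothesis on $\cc$ provides, and it is what forces all fibres of $\pi$ to have the uniform size $n^{2}$; without it, the points on the coordinate lines would need separate treatment and the clean equality would break down.
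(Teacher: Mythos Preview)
Your proposal is correct and follows essentially the same approach as the paper: both recognise $x\mapsto x^{n}$ as the norm map $\fq^{*}\to\F_{p^v}^{*}$, use the hypothesis on $\cc$ to ensure all relevant points have nonzero coordinates so that the map $\pi:\xx(\fq)\to\cc(\F_{p^v})$ is a uniform $n^{2}$-to-one surjection, and then invoke Lemma~\ref{lema5} together with Proposition~\ref{fnc impl nc} to confirm that (\ref{SV-1}) is a valid upper bound. Your treatment is slightly more explicit in normalising to affine coordinates and in checking surjectivity, but the argument is the same.
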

\begin{proof}
Note that since $\xx$ is Frobenius nonclassical with respect to $\mathcal{D}_s$ and $s\geq 2$, 
Lemma \ref{lema5} implies that   $\xx$ is classical with respect to $\D_1$. Therefore, since $p>M=\dim \mathcal{D}_s$, Proposition \ref{fnc impl nc}  implies that  $\xx$ is $\fq$-Frobenius classical with respect to $\D_1$. Hence bound  (\ref{SV-1}) gives $N_q(\xx)\leq d(d+q-1)/2.$

Recall that $\xx: F(x,y,z)=0$ and $\cc: G(x,y,z)=0$ are such that  $F(x,y,z)=G(x^n,y^n,z^n)$ and $n=\frac{q-1}{p^v-1}$. Therefore, the map $\pi:\xx(\fq) \lra \cc(\F_{p^v})$, given by $\pi(\alpha:\beta:\gamma)\mapsto(\alpha^n:\beta^n:\gamma^n)$, is well defined. Since the norm function $x\mapsto x^{\frac{q-1}{p^v-1}}$ maps $\F_q$ onto $\F_{p^v}$, we have 
\begin{equation}\label{fibra}
\xx(\fq)=\bigcup\limits_{Q \in \cc(\F_{q^v})}\pi^{-1}(Q).
\end{equation}

For  $Q=(a:b:c) \in \cc$,  with $abc \neq 0$, we have  $\#\pi^{-1}(Q)=n^2$, and then 
$N_q(\xx)=n^2N_{p^v}(\cc).$ Therefore, 
$$
N_q(\xx)=\frac{n^2s(s+p^v-1)}{2}=\displaystyle\frac{s}{2}\cdot\left(\displaystyle\frac{(q-1)^2}{(p^v-1)^2}s+\displaystyle\frac{(q-1)^2}{(p^v-1)}\right)=\displaystyle\frac{sn(sn+q-1)}{2},
$$
and the result follows. Note that in the case  $s=2$ and $p^v>31$, the curve $\xx$ has degree 
$d=2n=\frac{2(q-1)}{p^v-1}<\frac{q-1}{15}<\frac{q}{15},$ as claimed.

\end{proof}

Constructing  curves illustrating  case $s=2$ in Theorem \ref{exemplo} is straightforward. One  need only select one of the many irreducible conics $\cc$, defined over $\F_{p^v}$, with no $\F_{p^v}$-rational points $P:=(a:b:c)$ with $abc=0$. Since $N_{p^v}(\cc)=p^v+1$, the curve $\cc$ attains bound (\ref{SV-1}), and the result follows. 


\section{The case $s=2$}\label{The case $s=2$}

As mentioned in Section \ref{The general case}, if $s=1$, the curve 
$$
\xx:\displaystyle\sum_{ i+j+t = s}c_{ij}x^{in}y^{jn}z^{tn}=0
$$ 
is a Fermat curve $ax^n+by^n=z^n$, and its classicality and $\fq$-Frobenius classicality with respect to $\D_1$ and $\D_2$ were studied in \cite{GV1} and \cite{GV2}, respectively. In this section, we exploit the case $s=2$. More precisely, we consider the curve $$\xx:F(x,y,z)=0,$$ where
\begin{equation}\label{xs2}
F(x,y,z)=a_1x^{2n}+a_2x^ny^n+a_3y^{2n}+a_4x^nz^n+a_5y^nz^n+a_6z^{2n},
\end{equation}
with  $a_i \in \fq$, $i \in \{1,2,3,4,5,6\}$, and assume the following:
\begin{enumerate}
\item[(4.i)] $p>2$
\item[(4.ii)]  $\xx$ is nonsingular (in particular,  $a_1a_3a_6 \neq 0$)
\item[(4.iii)] At least one of the coefficients $a_2$, $a_4$ and $a_5$ is nonzero. In other words, equation (\ref{xs2}) is not of Fermat type.

\end{enumerate}

With these assumptions, we  prove that $\xx$ is $\fq$-Frobenius classical with respect to $\D_1$ and   establish necessary and sufficient conditions for the curve $\xx$ to be $\fq$-Frobenius nonclassical with respect to $\D_2$.

\begin{rem}\label{conic irred}
Since $\xx$ is irreducible, the conic given by the equation $a_1x^{2}+a_2xy+a_3y^{2}+a_4xz+a_5yz+a_6z^{2}=0$ is irreducible, i.e.,
\begin{equation}\label{det conic}
 \left|\begin{array}{ccc}
a_1 & a_2/2 & a_4/2 \\
a_2/2 & a_3 & a_5/2 \\
a_4/2 & a_5/2 & a_6 \end{array} \right|\neq 0.
\end{equation}
\end{rem}
 
Throughout this section, $F(x,y,1)$ will be denoted by $f(x,y)$.

\begin{prop}\label{inflex}
There exists a point $P \in \xx$ whose $(\D_1,P)$-order sequence is $(0,1,n)$. In particular, if $\xx$ is nonclassical with respect to $\D_1$, then $p|n(n-1)$. 
\end{prop}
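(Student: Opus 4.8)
The plan is to exhibit the required point $P$ on one of the three coordinate lines $\{x=0\},\{y=0\},\{z=0\}$ and then read off its order sequence; the ``in particular'' statement is then immediate from Proposition~\ref{SV1.7}. Write $\cc$ for the conic $a_1X^2+a_2XY+a_3Y^2+a_4XZ+a_5YZ+a_6Z^2=0$, which is smooth by Remark~\ref{conic irred}, and recall $F(x,y,z)=\cc(x^n,y^n,z^n)$. Then the substitution $(x:y:z)\mapsto(x^n:y^n:z^n)$ carries $\xx\cap\{x=0\}$ onto $\cc\cap\{X=0\}$, and this map is surjective since $\kk$ is algebraically closed. Since permuting $x,y,z$ permutes the triple $(a_2,a_4,a_5)$, hypothesis (4.iii) lets us assume $(a_2,a_4)\neq(0,0)$.

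The first (and main) step is to produce a point $P=(0:b:c)\in\xx$ with $a_2b^n+a_4c^n\neq0$. As $a_3a_6\neq0$, the binary quadratic $a_3Y^2+a_5YZ+a_6Z^2$ is nonzero, so $\xx\cap\{x=0\}\neq\emptyset$ and $\cc\cap\{X=0\}$ is either two distinct points or a single point $Q_0$ of multiplicity two. Since whether $a_2b^n+a_4c^n$ vanishes depends only on $(b^n:c^n)$, a failure of the claim would force every point of $\cc\cap\{X=0\}$ to lie on the line $\{a_2Y+a_4Z=0\}$. In the two-point case this forces $\{a_2Y+a_4Z=0\}=\{X=0\}$, hence $a_2=a_4=0$, contrary to the reduction above; in the one-point case, $\{X=0\}$ being the tangent to the smooth conic $\cc$ at $Q_0=(0:\beta_0:\gamma_0)$ makes the gradient of $\cc$ at $Q_0$ proportional to $(1,0,0)$, so $\cc_X(Q_0)=a_2\beta_0+a_4\gamma_0\neq0$, again a contradiction. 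Hence $P$ exists (and automatically $bc\neq0$, as $a_3a_6\neq0$ excludes $b=0$ and $c=0$).

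Now fix such a $P=(0:b:c)$. Because $n\geq2$, the partial $F_x=nx^{n-1}(2a_1x^n+a_2y^n+a_4z^n)$ vanishes at $P$, so the tangent line $L_P$ passes through $(1:0:0)$; parametrizing $L_P$ by $(\lambda:\mu b:\mu c)$ (with $P$ at $\lambda=0$) and using $F(0,b,c)=0$ gives
$$
F(\lambda,\mu b,\mu c)=a_1\lambda^{2n}+(a_2b^n+a_4c^n)\lambda^n\mu^n=\lambda^n\bigl(a_1\lambda^n+(a_2b^n+a_4c^n)\mu^n\bigr),
$$
which, on setting $\mu=1$, vanishes at $\lambda=0$ to order exactly $n$ since $a_2b^n+a_4c^n\neq0$. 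Thus $I(P,\xx\cap L_P)=n$, i.e. $j_2(P)=n$; as $j_0(P)=0$ and $j_1(P)=1$ at every smooth point with respect to lines, the $(\D_1,P)$-order sequence is $(0,1,n)$. For the last assertion, if $\xx$ is nonclassical with respect to $\D_1$, Proposition~\ref{SV1.7} applied at $P$ involves the integer $\prod_{0\le r<i\le2}\frac{j_i(P)-j_r(P)}{i-r}=\frac{1}{1}\cdot\frac{n}{2}\cdot\frac{n-1}{1}=\frac{n(n-1)}{2}$; were this prime to $p$, $\xx$ would be classical, so $p\mid n(n-1)/2$, and as $p>2$ this yields $p\mid n(n-1)$.

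The only real obstacle is the first step: one must exclude the degenerate positions in which $\cc$ is tangent to, or lies inside the ``companion line'' of, a coordinate axis. Hypothesis (4.iii) together with the smoothness of $\cc$ is precisely what prevents this from occurring along all of $\{x=0\}$, $\{y=0\}$, $\{z=0\}$ simultaneously; the rest is a single intersection-multiplicity computation followed by a direct appeal to Proposition~\ref{SV1.7}.
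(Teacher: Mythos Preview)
Your proof is correct and follows essentially the same strategy as the paper's: find a point of $\xx$ on a coordinate line, observe that the tangent there is the line joining $P$ to the opposite coordinate vertex, compute the intersection multiplicity to be exactly $n$ provided a certain linear expression in the conic coefficients does not vanish, and rule out the degenerate case using the irreducibility of the conic $\cc$ (Remark~\ref{conic irred}). The only difference is cosmetic: the paper works on $\{y=0\}$ assuming $a_2\neq 0$ and derives the contradiction by an explicit determinant computation, whereas you work on $\{x=0\}$ assuming $(a_2,a_4)\neq(0,0)$ and phrase the same contradiction geometrically via the gradient of $\cc$ at a tangent point; both arguments unwind to the nonvanishing of (\ref{det conic}).
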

\begin{proof}
Using  assumption (4.iii), without loss of generality, we assume $a_2 \neq 0$. If $P=(u:0:1) \in \xx$, then  $f(u,0)=a_1u^{2n}+a_4u^n+a_6=0$ (in particular, $u \neq 0$) and the tangent line to $\xx$ at $P$ is given by $\ell_P:x-uz=0$. Thus
\begin{equation}\label{tgt}
 f(u,y)=y^ng(y),
\end{equation}
where $g(y)=a_2u^n+a_5+a_3y^n\neq 0$. Then $I(P, \ell_P \cap \xx)=n$ if  and only if $a_2u^n+a_5 \neq 0$. Our remaining problem reduces to find a point $P=(u:0:1) \in \xx$ for which $a_2u^n+a_5 \neq 0$.

Suppose  there is no such point. That is,  all the roots of $a_1x^{2n}+a_4x^{n}+a_6=0$ are roots of $a_2x^n+a_5 = 0$. This implies that $a_1x^2+a_4x+a_6=0$ has a double root $\alpha=-a_5/a_2$, which yields
\begin{equation}\label{a2 dif 0}
a_4^2-4a_1a_6=0 \ \ \textrm{ and } \ \ \ a_1a_5^2-a_2a_4a_5+a_2^2a_6=0.
\end{equation}
One can easily  check that (\ref{a2 dif 0})  gives

$$
 \left|\begin{array}{ccc}
a_1 & a_2/2 & a_4/2 \\
a_2/2 & a_3 & a_5/2 \\
a_4/2 & a_5/2 & a_6 \end{array} \right|= 0,
$$
which contradicts (\ref{det conic}). 

The last statement of the proposition follows directly from Proposition \ref{SV1.7}.
\end{proof}

\begin{prop}\label{prop8}
The curve $\xx$ is classical with respect to $\D_1$. Consequently, $\xx$ is $\fq$-Frobenius classical with respect to $\D_1$.
\end{prop}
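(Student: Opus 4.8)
The plan is to prove classicality with respect to $\D_1$ by contradiction, with Proposition \ref{inflex} as the point of departure, and then obtain $\fq$-Frobenius classicality formally. Suppose $\xx$ is nonclassical with respect to $\D_1$. By the last assertion of Proposition \ref{inflex} this forces $p\mid n(n-1)$, and it suffices to rule out both possibilities. If $p\mid n$, then every exponent $in$, $jn$, $tn$ in (\ref{xs2}) is divisible by $p$, so $F_x=F_y=F_z=0$ in $\kk[x,y,z]$ and every point of $\xx$ is singular, contradicting (4.ii); hence $p\nmid n$, and therefore $p\mid n-1$.

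It remains to handle the case $p\mid n-1$; here the strategy is to compute the Hessian of $F$ and check that it does not vanish on $\xx$. Write $F=G(x^{n},y^{n},z^{n})$ with $G$ the conic form of Remark \ref{conic irred}. Direct differentiation gives $F_{xx}=n^{2}x^{2n-2}G_{xx}+n(n-1)x^{n-2}G_{x}$ and $F_{xy}=n^{2}x^{n-1}y^{n-1}G_{xy}$, and analogously for the other second partials of $F$, where the derivatives of $G$ are evaluated at $(x^{n},y^{n},z^{n})$. Since $p\mid n-1$ gives $n(n-1)\equiv0\pmod p$, the first-order terms vanish, and the Hessian matrix of $F$ equals $n^{2}\,D_{0}\,H\,D_{0}$, where $D_{0}=\mathrm{diag}(x^{n-1},y^{n-1},z^{n-1})$ and $H$ is the constant Hessian matrix of $G$. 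As $H$ is twice the matrix in (\ref{det conic}), it is nonsingular by Remark \ref{conic irred} together with $p>2$; since also $n\ne0$ in $\kk$, the determinant of the Hessian matrix of $F$ equals $n^{6}(\det H)(xyz)^{2(n-1)}$, a nonzero scalar multiple of $(xyz)^{2(n-1)}$. Because $\xx$ is nonsingular, $F$ is irreducible of degree $2n\ge4$, hence $F$ does not divide $(xyz)^{2(n-1)}$; equivalently, the Hessian curve of $\xx$ does not contain $\xx$. By the standard criterion relating classicality with respect to lines to the Hessian, valid in characteristic $\ne2$ (see \cite{HKT}), $\xx$ is then classical with respect to $\D_1$ — contradicting the assumption, so the first statement holds. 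The second statement is then immediate: by (4.i) we have $\dim\D_1=2<p$, so the contrapositive of Proposition \ref{fnc impl nc}, applied to $\D=\D_1$, turns classicality with respect to $\D_1$ into $\fq$-Frobenius classicality with respect to $\D_1$.

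I expect the only substantive work to be the Hessian computation in the case $p\mid n-1$; it is a routine calculation, but it is precisely the step where the irreducibility of the associated conic (Remark \ref{conic irred}) and the hypothesis $p>2$ enter. The one point that calls for care is the appeal to the Hessian criterion in a form valid in positive characteristic $\ne2$ for curves of arbitrary degree; should one prefer to avoid it, the same computation shows that the inflection points of $\xx$ all lie on the coordinate lines $xyz=0$, so some point $P\in\xx$ has $j_{2}(P)=2$, forcing $\epsilon_{2}=2$ via (\ref{ej}) and again giving classicality with respect to $\D_1$.
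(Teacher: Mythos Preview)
Your proof is correct, but it follows a genuinely different route from the paper's. The paper argues by contradiction as you do and also invokes Proposition~\ref{inflex} to get $p\mid n(n-1)$, but instead of splitting into the cases $p\mid n$ and $p\mid n-1$, it quotes Pardini's result \cite[Corollary~2.2]{Pa}: a nonsingular plane curve of degree $d$ that is nonclassical for lines in characteristic $p>2$ must satisfy $p\mid d-1$, i.e.\ $p\mid 2n-1$. Since $\gcd(2n-1,n(n-1))=1$, this already gives the contradiction in one line. Your argument trades that external citation for an explicit Hessian computation in the case $p\mid n-1$ (the case $p\mid n$ you dispose of directly via singularity). The computation is clean precisely because $p\mid n-1$ kills the cross terms, leaving the Hessian as a nonzero scalar times $(xyz)^{2(n-1)}$; and the direction of the Hessian criterion you actually need --- ``nonclassical $\Rightarrow$ Hessian vanishes on $\xx$'' --- holds in any characteristic $\ne 2$, so your appeal to it is safe (indeed $p\mid n-1$ forces $p\nmid 2n-1=d-1$, so even the full equivalence is available here). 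The payoff of your approach is self-containment: you avoid importing Pardini's theorem and make the role of the irreducible conic (Remark~\ref{conic irred}) completely transparent. The paper's approach is shorter, and in effect packages your Hessian argument into the cited result. The deduction of $\fq$-Frobenius classicality from Proposition~\ref{fnc impl nc} is identical in both.
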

\begin{proof}
Suppose that $\xx$ is nonclassical with respect to $\D_1$. Since $\xx$ is nonsingular and $p>2$, by \cite[Corollary 2.2]{Pa},  $p|2n-1$. On the other hand, by Proposition \ref{inflex}, $p|n(n-1)$. However,  $\gcd(2n-1,n^2-n)=1$, and then $\xx$ must be classical with respect to $\D_1$.  Thus by Proposition \ref{fnc impl nc},  the curve $\xx$ is $\fq$-Frobenius classical with respect to $\D_1$.
\end{proof}

\begin{rem}
It follows from Proposition \ref{prop8} that the bound (\ref{SV-1}) can always be applied to the curve $\xx$. In other words, $N_q(\xx) \leq d(d+q-1)/2$.
\end{rem}

	We now study the (non)classicality and $\fq$-Frobenius (non)classicality of $\xx$ with respect to the linear series $\D_2$, making the following assumptions:
\begin{itemize}
\item[(4.iv)] $p>7$
\item[(4.v)] $n>2$.
\end{itemize}

The following theorems will be proved after a sequence of partial results.

\begin{thm}\label{teo12}
The curve $\xx$ is nonclassical with respect to $\D_2$ if and only if one of the following holds:
\begin{enumerate}
\item [(1)] $p|n-1$
\item [(2)] $p|2n-1$ and all but one of the coefficients $a_2$, $a_4$ and $a_5$ are zero.
\end{enumerate}
\end{thm}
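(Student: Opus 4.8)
The plan is to analyze nonclassicality of $\xx$ with respect to $\D_2$ directly via Theorem \ref{GV1-1} applied to the Veronese morphism $\phi_2 = (1:x:y:x^2:xy:y^2)$, where $x,y$ satisfy $f(x,y)=0$. Nonclassicality with respect to $\D_2$ means precisely that $1, x, y, x^2, xy, y^2$ are linearly dependent over $(\kk(\xx))_r$ for some $r\geq 1$, i.e., there is a relation $\sum_{i+j\leq 2} \gamma_{ij}^{p^r} x^i y^j = 0$ in $\kk(\xx)$ with not all $\gamma_{ij}$ zero. Equivalently, the conic $Q(x,y,z) = \sum \gamma_{ij} x^i y^j z^{2-i-j}$ (of degree $2$) meets $\xx$ with intersection multiplicity $\geq p^r$ at every point, after pulling back Frobenius; concretely, $f(x,y)$ divides $\sum \gamma_{ij}(x^iy^j)^{p^r}$ for suitable exponent bookkeeping. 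So the first step is to set up: $\xx$ is nonclassical w.r.t. $\D_2$ iff there exists $r\geq 1$ and a nonzero degree-$\leq 2$ polynomial $G$ with $f(x,y) \mid G(x^{p^r}, y^{p^r})$ — but more carefully, since $f$ has degree $2n$ and the $\D_2$-osculating conic at a generic point must have contact $\geq p^r$, I want to exploit the block/monomial structure of $f$.

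The key structural observation is that $f(x,y) = a_1x^{2n}+a_2x^ny^n+a_3y^{2n}+a_4x^n+a_5y^n+a_6$ is a polynomial in $x^n$ and $y^n$; write $f = \Phi(x^n, y^n)$ where $\Phi(X,Y) = a_1X^2+a_2XY+a_3Y^2+a_4X+a_5Y+a_6$ is the (irreducible, by Remark \ref{conic irred}) affine conic. Nonclassicality w.r.t. $\D_2$ forces, via the standard argument (as in the proof of Lemma \ref{lema5} and Theorem \ref{teo6}), a divisibility $f(x,y) \mid h(x,y)$ where $h = \sum_{i+j\leq 2} \gamma_{ij} x^{i p^r} y^{j p^r}$ for some conic $\sum\gamma_{ij}X^iY^iZ^{2-i-j}$; since $\deg h = 2p^r$ and $\deg f = 2n$, and $f$ is irreducible, one gets $p^r \geq n$ and in fact $h = \lambda f^{p^r/n}$-type constraints. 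I would then split into cases according to how $p^r$ compares with $n$ and $2n-1$: the natural candidates are $p^r \equiv$ something forcing $n \mid p^r$ or the exponents $ip^r$ to reduce modulo the structure of $f$. Matching $f\mid h$ with the precise monomial supports should funnel everything into exactly two possibilities — one where the dependence relation is "diagonal" (using that $x^{2n}, x^ny^n, y^{2n}, x^n, y^n, 1$ is a sub-pattern of the degree-$2$ monomials raised to a suitable power, which happens when $p\mid n-1$), giving case (1); and one where a lower-degree collision is possible only if enough $a_i$ vanish, giving case (2) with $p\mid 2n-1$.

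For the forward direction (nonclassical $\Rightarrow$ (1) or (2)), after establishing $p^r\geq n$ I would write $p^r = n + k$ and examine the monomial $x^{ip^r}y^{jp^r} = x^{in+ik}y^{jn+jk}$; divisibility by $f = \Phi(x^n,y^n)$ together with the fact that $\Phi$ is irreducible of degree $2$ in $X=x^n, Y=y^n$ forces, after reduction, that the "shift" exponents behave compatibly — pushing toward $k$ being a multiple of relevant quantities, and ultimately $p^r \in \{n, 2n-1, \ldots\}$-type constraints modulo divisibility. The congruence $p\mid n-1$ arises exactly when the shifted pattern $x^{i(n-1)}y^{j(n-1)}\cdot(x^iy^j)^{?}$ lines up; the congruence $p\mid 2n-1$ with vanishing coefficients arises when one can only achieve contact using a degenerate (reducible or lower-rank) conic, which is obstructed by irreducibility of $\Phi$ unless the $a_i$'s are special — precisely "all but one of $a_2,a_4,a_5$ zero." For the converse, if $p\mid n-1$ write $n = mp^v+1$ as in Lemma \ref{lema5} and exhibit the explicit osculating conic $H_P$ from (\ref{curvaHp}) with $s=2$, showing $\epsilon_M \geq p^v > M$; if $p\mid 2n-1$ and (say) $a_2=a_4=0$ so $f = a_1x^{2n}+a_3y^{2n}+a_5y^n+a_6$ (or the symmetric cases), then $2n-1 = p^w\cdot(\text{something})$ and one checks directly that $1, y^n$ together with $x^{2n}, \ldots$ give a $(\kk(\xx))_r$-dependence among $1,x,y,x^2,xy,y^2$ — concretely exhibiting the relation and invoking Theorem \ref{GV1-1}.

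I expect the main obstacle to be the forward direction's case analysis: controlling all ways a degree-$2$ polynomial raised to the $p^r$-th power (in the Frobenius sense) can be divisible by the irreducible degree-$2n$ polynomial $f = \Phi(x^n,y^n)$, and showing the only escapes are the two listed. The delicate point is ruling out "mixed" solutions where the conic $G$ is neither the obvious Frobenius-twist of $\Phi$ nor forces coefficient vanishing — this is where irreducibility of $\Phi$ (Remark \ref{conic irred}), the hypotheses $p>7$ and $n>2$, and the coprimality-type arguments (cf. $\gcd(2n-1,n^2-n)=1$ used in Proposition \ref{prop8}) must be combined carefully. I would structure this as the "sequence of partial results" the paper promises: a lemma pinning down $p^r\geq n$, a lemma on the monomial-support matching forcing $p^r$ into the arithmetic progression mod $n$ or mod $2n-1$, and a lemma showing the $2n-1$ branch requires the coefficient condition via a rank argument on the associated quadratic form.
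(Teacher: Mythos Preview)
Your proposal has a genuine gap in the forward direction. You correctly note that nonclassicality with respect to $\D_2$ (together with classicality for $\D_1$ and \cite[Proposition~2]{GV1}) gives $\epsilon_5=p^s$ for some $s\geq 1$, and hence by Theorem~\ref{GV1-1} a relation $\sum_{i+j\leq 2}\gamma_{ij}^{p^r}x^iy^j=0$ in $\kk(\xx)$. But the $\gamma_{ij}$ here are elements of $\kk(\xx)$, not constants. You then immediately slide into treating them as constants: ``the conic $Q(x,y,z)=\sum\gamma_{ij}x^iy^jz^{2-i-j}$'', ``concretely, $f(x,y)$ divides $\sum\gamma_{ij}(x^iy^j)^{p^r}$'', and ``$h=\sum_{i+j\leq 2}\gamma_{ij}x^{ip^r}y^{jp^r}$ for some conic''. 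This is the error. A $(\kk(\xx))_r$-dependence does \emph{not} produce a single fixed conic nor a polynomial divisibility $f\mid h$ with $h$ of degree $2p^r$; it only says that the osculating conic $\hh_P^2$ (whose coefficients vary with $P$) has contact $\geq p^r$ at each $P$. The clean divisibility you are remembering from Theorem~\ref{teo6} is specific to the \emph{Frobenius} condition $\Phi_q(P)\in\hh_P^s$, which collapses the $P$-dependence; there is no analogue for ordinary nonclassicality. Consequently your entire ``monomial-support matching'' and ``$p^r\geq n$'' analysis never gets off the ground, and you have no mechanism to reach the dichotomy $p\mid(n-1)(2n-1)$.

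The paper's route is quite different and avoids this issue entirely. For the forward direction it works \emph{locally at a single point}: Proposition~\ref{inflex} produces $P\in\xx$ with $(\D_1,P)$-orders $(0,1,n)$, hence $(\D_2,P)$-orders $(0,1,2,n,n+1,2n)$, and Proposition~\ref{SV1.7} forces $p\mid n(n-1)(n+1)(n-2)(2n-1)$. The factors $n$, $n+1$, $n-2$ are then eliminated (irreducibility of $\xx$ and Lemma~\ref{lema10}, the latter via an explicit auxiliary quartic and B\'ezout), leaving $p\mid(n-1)(2n-1)$. The refinement in the $p\mid 2n-1$ case (Lemma~\ref{teo 14}) again constructs an explicit quartic $\qq_P$, proves it irreducible using the appendix (Theorem~\ref{main ap}) together with~(\ref{det conic}), and derives a contradiction via Lemma~\ref{lema4} and B\'ezout. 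Your converse direction (cases (1) and (2) imply nonclassical) is fine and matches Lemmas~\ref{lema11} and~\ref{teo 14}; the problem is solely the forward implication.
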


\begin{thm}\label{main frob}
The curve $\xx$ is $\fq$-Frobenius nonclassical with respect to $\D_2$ if and only if one of the following holds:
\begin{enumerate}
\item [(1)] $p|n-1$ and $n=\frac{p^h-1}{p^v-1}$ for some integer $v<h$ with $v|h$, and $\xx$ is defined over $\F_{p^v}$
\item [(2)] $p|2n-1$, all but one of the coefficients $a_2$, $a_4$ and $a_5$ are zero, $n=\frac{p^h-1}{2(p^v-1)}$ for some integer $v<h$ with $v|h$ and, up to an $\fq$-scaling of the coordinates, the curve $\xx$ is defined over $\F_{p^v}$.
\end{enumerate}
\end{thm}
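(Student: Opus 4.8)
The plan is to split according to Theorem~\ref{teo12}. First, two reductions valid throughout. Since $p>7>5=M$, Proposition~\ref{fnc impl nc} shows that if $\xx$ is $\fq$-Frobenius nonclassical with respect to $\D_2$ then it is nonclassical with respect to $\D_2$, so by Theorem~\ref{teo12} one of the two listed alternatives holds, and I would treat them separately. Secondly, I would use the criterion: \emph{$\xx$ is $\fq$-Frobenius nonclassical with respect to $\D_2$ if and only if $\Phi_q(P)\in\hh_P^2$ for all but finitely many $P\in\xx$} --- the ``if'' part being exactly the computation in the proof of Corollary~\ref{cor7}, and the ``only if'' part following from \cite[Corollary~1.3]{SV} together with \cite[Theorem~1.1]{SV}, which identifies the $\D_2$-osculating hyperplane at $P$ with the conic $\hh_P^2$.

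In the case $p\mid n-1$ there is nothing new: $\xx$ is then a curve of the family $(\ref{x})$ with $s=2$, all the standing hypotheses (3.i)--(3.iii) of Section~\ref{The general case} hold (in particular $p>7>5$), so Theorem~\ref{teo6} applies verbatim and, via the criterion above, yields exactly alternative~$(1)$ (with no scaling, as in the statement).

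The work is in the case $p\mid 2n-1$. After permuting the coordinates I may assume $a_4=a_5=0$ and $a_2\neq 0$; then $F=G(x^n,y^n,z^n)$ with $G(X,Y,Z)=a_1X^2+a_2XY+a_3Y^2+a_6Z^2$, and smoothness of $\xx$ forces $a_1a_3a_6\neq 0$ and $a_2^2-4a_1a_3\neq 0$ (else $F$ is reducible). Writing $2n-1=\ell p^{v}$ with $p\nmid\ell$ and $\ell=2\lambda+1$ (so $n=\lambda p^{v}+(p^{v}+1)/2$), the first and hardest step is to compute $\hh_P^2$, which I would do as in Lemma~\ref{lema5}: for generic $P=(a:b:c)$, with local parameter $t=x-a$, the identities $x^{2n}=x(x^{\ell})^{p^{v}}$, etc., give modulo $t^{p^{v}}$ that $x^{2n}\equiv a^{2n-1}x$, $y^{2n}\equiv b^{2n-1}y$, $z^{2n}\equiv c^{2n-1}z$ and $x^{n}y^{n}\equiv (ab)^{\lambda p^{v}}(xy)^{(p^{v}+1)/2}$; plugging these into $F(x,y,z)\equiv 0$, squaring the resulting congruence, and using $(xy)^{p^{v}+1}\equiv (ab)^{p^{v}}xy\pmod{t^{p^{v}}}$ would show that the conic \[H_P(x,y,z)=(a_1a^{2n-1}x+a_3b^{2n-1}y+a_6c^{2n-1}z)^2-a_2^2(ab)^{2n-1}xy\] meets $\xx$ at $P$ with multiplicity $\geq p^{v}>4$. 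Since $\xx$ is classical with respect to $\D_1$ (Proposition~\ref{prop8}), a degenerate conic cannot meet $\xx$ with multiplicity $>4$ at a generic point, so $\{H_P=0\}$ is irreducible, and then Lemma~\ref{lema4} and B\'ezout's theorem force $\hh_P^2=\{H_P=0\}$. By the criterion, $\xx$ is $\fq$-Frobenius nonclassical with respect to $\D_2$ if and only if $F$ divides $g:=\Psi(x^{m'},y^{m'},z^{m'})$, where $\Psi(X,Y,Z)=(a_1X+a_3Y+a_6Z)^2-a_2^2XY$ and $m'=2n-1+q$.

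From here the argument would mirror the proof of Theorem~\ref{teo6}. One shows $v<h$ (if $v\geq h$ then $p^h\mid m'$, so $g$ is a $q$-th power of a polynomial of degree $2m'/q$, forcing $2n\leq 2m'/q$, i.e.\ $q(n-1)\leq 2n-1$, impossible for $n\geq 2$, $q\geq 11$). Hence $m'=p^{v}K$ with $K=\ell+p^{h-v}$, and $g=r^{p^{v}}$ with $r=\Psi^{1/p^{v}}(x^{K},y^{K},z^{K})$. Crucially $K$ is even (a sum of two odd numbers), so $r$ factors as $r_+r_-$, where $r_{\pm}=a_1^{1/p^{v}}x^{K}+a_3^{1/p^{v}}y^{K}+a_6^{1/p^{v}}z^{K}\pm a_2^{1/p^{v}}x^{K/2}y^{K/2}=G^{\pm}(x^{K/2},y^{K/2},z^{K/2})$ for a smooth conic $G^{\pm}$ not tangent to any coordinate line and missing the three coordinate vertices; as in Lemma~\ref{lema5} such a curve is smooth, so $r_{\pm}$ is irreducible of degree $K$. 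Since $F$ is irreducible and divides $r_+r_-$, it is a scalar multiple of one of them, so $2n=K$; with $2n-1=\ell p^{v}$ this gives $2n(p^{v}-1)=p^{h}-1$, hence $n=(p^{h}-1)/(2(p^{v}-1))$ and $v\mid h$. Matching coefficients in $F=\alpha r_{\pm}$ then shows (using $q-1=2n(p^{v}-1)$) that, after at most one $\fq$-scaling of a coordinate, all $a_i$ lie in $\F_{p^{v}}$. For the converse, if $n=(p^{h}-1)/(2(p^{v}-1))$ with $v<h$, $v\mid h$, and (after an $\fq$-scaling, which does not affect $\fq$-Frobenius nonclassicality) all $a_i\in\F_{p^{v}}$, then $m'=2np^{v}$ and $\Psi^{1/p^{v}}=\Psi$, so $g=\Psi(x^{2n},y^{2n},z^{2n})^{p^{v}}$; since $\Psi(x^{2n},y^{2n},z^{2n})$ is $F$ times the polynomial obtained from $F$ by replacing $a_2$ with $-a_2$, we get $F\mid g$, so $\xx$ is $\fq$-Frobenius nonclassical with respect to $\D_2$. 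The main obstacle is the computation of $H_P$: here $x^ny^n$ does \emph{not} reduce to a curve of degree $\le 2$ modulo a power of $t$, so the base conic $G$ cannot be used directly and one is forced through the auxiliary conic $\Psi$ (obtained by squaring a congruence) --- which is also where the ``up to an $\fq$-scaling'' clause, absent in alternative~$(1)$, originates. Some of the irreducibility claims above, and in the proof of Theorem~\ref{teo12}, would invoke the quartic irreducibility facts from the appendix.
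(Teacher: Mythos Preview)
Your approach is correct but differs from the paper's in the case $p\mid 2n-1$. For $p\mid n-1$ both you and the paper simply invoke Theorem~\ref{teo6} via Lemma~\ref{teo13}, so there is no difference there. For $p\mid 2n-1$, however, the paper (Lemma~\ref{teo 15}) works entirely on the Wronskian side: after normalizing to $y^{2n}=ax^{2n}+bx^n+c$ it computes the Hasse derivatives $D_x^{(i)}(y)$ explicitly, expands the $5\times 5$ Frobenius--Wronskian determinant $W$, and obtains a factorization $W=\text{(unit)}\cdot W_1\cdot W_2$ with $W_i=y^{2n+q-1}-ax^{2n+q-1}\mp bx^{(2n+q-1)/2}-c$. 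The conclusion then comes from Lemma~\ref{pol} (the Fried--MacRae divisibility criterion for polynomials $y^l-b_1(x)\mid y^m-b_2(x)$), which forces $2n\mid 2n+q-1$, $(2n+q-1)/2n=p^v$, and the coefficient conditions. Your route---computing the osculating conic $H_P$ by a congruence argument, squaring to pass from the half-integer exponent $(p^v+1)/2$ to an honest conic, and then imitating the proof of Theorem~\ref{teo6} with the auxiliary form $\Psi$---is more geometric and keeps the two cases of Theorem~\ref{teo12} on an equal footing; the paper's approach is more computational but avoids having to identify $\hh_P^2$ at all.

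Two small points to tighten. First, the irreducibility of $r_\pm$ is not ``as in Lemma~\ref{lema5}'': that lemma deduces irreducibility of the \emph{small} curve from smoothness of the \emph{large} one, whereas you need the opposite direction. What you actually want is the direct check that $G^\pm(x^{K/2},y^{K/2},z^{K/2})$ is nonsingular whenever $G^\pm$ is a smooth conic missing the three coordinate vertices and $p\nmid K/2$ (a singular point with all coordinates nonzero would descend to a singular point of $G^\pm$, and the coordinate-hyperplane cases are excluded by the vertex condition); you have all the needed hypotheses since $a_1a_3a_6\neq 0$, $a_2^2\neq 4a_1a_3$, and $K\equiv \ell\not\equiv 0\pmod p$. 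Second, your Frobenius criterion ``$\Phi_q(P)\in\hh_P^2$ for cofinitely many $P$'' is equivalent to $\fq$-Frobenius nonclassicality \emph{because} the first five $\D_2$-orders are $0,1,2,3,4$ (Proposition~\ref{prop8}); this is implicit in your argument but worth stating, since in general the criterion detects $\nu_{M-1}=\epsilon_M$ rather than Frobenius nonclassicality per se.
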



The next three lemmas will provide the key results for the proof of Theorem \ref{teo12}.

\begin{lem}\label{lema10}
If $p|(n+1)(n-2)$, then $\xx$ is classical with respect to $\D_2$.
\end{lem}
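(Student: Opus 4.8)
The plan is to show that if $\xx$ were nonclassical with respect to $\D_2$, then the hypothesis $p\mid(n+1)(n-2)$ would force a contradiction via Proposition \ref{SV1.7}. The first step is to observe that, by Theorem \ref{teo12} (or rather, independently of it, since we want to use this lemma in its proof), nonclassicality with respect to $\D_2$ of a curve of the shape (\ref{xs2}) should be governed by a divisibility condition on $n$ coming from the exponents $n$ and $2n$ appearing in $F$. Concretely, I would work with $f(x,y)=F(x,y,1)$, pick a sufficiently generic point $P=(u:v:1)\in\xx$ with $uvw\neq 0$ (so that all monomials survive), and compute a lower bound for $j_M(P)$, the largest $(\D_2,P)$-order, i.e.\ the intersection multiplicity of $\xx$ with its $2$-osculating conic $\hh_P^2$ at $P$. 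Since $M=5$, the relevant product in Proposition \ref{SV1.7} is $\prod_{i>0}\frac{j_i(P)-j_0(P)}{i-0}=\frac{j_1\cdots j_5}{5!}$ (taking $j_0(P)=0$, which holds for $P\in\xx$); if this is coprime to $p$, then $\xx$ is classical with respect to $\D_2$.

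The key computation is to identify the order sequence $(j_0(P),\dots,j_5(P))$ at a well-chosen $P$. I expect that at a generic point one gets $(0,1,2,\dots)$ in low orders, and the interesting behavior appears only in the top orders $j_4(P),j_5(P)$, which are controlled by the ``Fermat-like'' structure: writing $f$ in the form $\sum c_{ij}(x^{i}y^{j})\cdot(x^{i}y^{j})^{\,?}$ as in Lemma \ref{lema5}'s proof is not available here (that used $p\mid n-1$), so instead I would directly substitute the local parameter at $P$ and Taylor-expand each term $x^{an}y^{bn}$, $a+b\le 2$. The exponents $n$ and $2n$ modulo $p$ determine which Hasse derivatives vanish. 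Under the hypothesis $p\mid(n+1)(n-2)$ — i.e.\ $n\equiv -1$ or $n\equiv 2\pmod p$ — one checks that $2n\equiv -2$ or $2n\equiv 4\pmod p$ respectively; in either case the relevant small multiples $n, 2n, \pm 1, \pm 2$ of the generators remain in a range where no unexpected $p$-divisibility occurs in $\frac{j_1\cdots j_5}{120}$, because $p>7$ by assumption (4.iv). So the order sequence stays ``short'' enough that the product is a unit mod $p$.

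The main obstacle is pinning down the order sequence precisely enough — in particular ruling out that some $j_i(P)$ jumps to a large value like $n$ or $2n$ for \emph{every} admissible $P$. To handle this I would argue as in Proposition \ref{inflex}: exhibit at least one point $P$ (e.g.\ on a coordinate section, or using the nondegeneracy (\ref{det conic}) of the associated conic) where the $2$-osculating conic has controlled contact, so that $j_5(P)$ is explicitly one of $\{2,3,4,5\}$ plus a correction divisible only by primes $\le 7$ or by $n$-related factors that the hypothesis excludes. Once such a $P$ is produced with $\prod_{i>0}j_i(P)/5!$ coprime to $p$, Proposition \ref{SV1.7} immediately yields classicality with respect to $\D_2$, completing the proof. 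The delicate bookkeeping is the case split $n\equiv -1$ versus $n\equiv 2\pmod p$ together with the induced residues of $2n$, but since $p>7$ all the denominators $1,2,3,4,5$ in the product are invertible and no genuine obstruction survives.
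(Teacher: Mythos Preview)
Your proposal has a genuine gap: the approach via Proposition~\ref{SV1.7} cannot succeed in the cases $p\mid n+1$ and $p\mid n-2$, and this is precisely why the paper needs a separate argument here.

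Concretely, the only point of $\xx$ at which you (or the paper) can compute the $(\D_2,P)$-order sequence explicitly is the point $P$ produced in Proposition~\ref{inflex}, whose $(\D_1,P)$-orders are $(0,1,n)$ and hence whose $(\D_2,P)$-orders are $(0,1,2,n,n+1,2n)$ (this is exactly the computation used later in Lemma~\ref{lema9}). For this sequence the integer of Proposition~\ref{SV1.7} is
\[
\prod_{0\le r<i\le 5}\frac{j_i(P)-j_r(P)}{i-r}
=\frac{n^4(n-1)^4(n+1)(n-2)(2n-1)}{1!\,2!\,3!\,4!\,5!},
\]
which is divisible by $p$ under the hypothesis $p\mid(n+1)(n-2)$. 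So Proposition~\ref{SV1.7} gives no information at this $P$, and you do not produce any other point with an explicitly known order sequence. Your suggestion that one might find a $P$ with $j_5(P)$ ``in $\{2,3,4,5\}$ plus a small correction'' is circular: if $\xx$ were nonclassical with $\epsilon_5=p^s$, then $j_5(P)\ge p^s\ge p>7$ at \emph{every} point, so no such $P$ exists a priori.

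The paper's proof is completely different and geometric. Assuming nonclassicality (so $\epsilon_5=p^s$), it writes $n=mp^v+2$ (resp.\ $n=mp^v-1$) and, for each $P=(u:w:1)\in\xx$ with $uw\neq 0$, constructs an explicit \emph{quartic} $\qq_P$ with $I(P,\xx\cap\qq_P)\ge p^v$. The key step is proving $\qq_P$ is irreducible (via projective equivalence to a fixed smooth quartic in the first case, and via Lemma~\ref{quart irr} in the second). Then Lemma~\ref{lema4} forces $I(P,\hh_P^2\cap\qq_P)\ge p>8=\deg\hh_P^2\cdot\deg\qq_P$, so by B\'ezout $\hh_P^2$ would be a component of the irreducible quartic $\qq_P$, a contradiction. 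None of this structure appears in your sketch; the auxiliary-quartic idea is the missing ingredient.
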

\begin{proof}
Since $\xx$ is classical with respect to $\D_1$, the $\D_2$-order sequence of $\xx$ is given by $(0,1,2,3,4,\epsilon)$, where $\epsilon \geq 5$. Suppose that $\epsilon>5$, i.e., $\xx$ is nonclassical for $\D_2$. Then by \cite[Proposition 2]{GV1},  $\epsilon=p^s$, for some $s>0$.

First, assume  $p|n-2$. Hence $n=mp^v+2$, for some $m,v>0$, with gcd($m,p$)=$1$ and then $f(x,y)=0$ can be written as
\begin{equation}
a_1(x^{2m})^{p^v}x^4+a_2(x^my^m)^{p^v}x^2y^2+a_3(y^{2m})^{p^v}y^4+a_4(x^m)^{p^v}x^2+a_5(y^m)^{p^v}y^2+a_6=0. \nonumber
\end{equation}

Let $P=(u:w:1) \in \xx$ with $uw \neq 0$ and consider the projective closure $\qq_P \subset \p^2(\kk)$ of the curve given by
\begin{eqnarray}\nonumber
r(x,y)&=&a_1(u^{2m})^{p^\nu}x^4+a_2(u^mw^m)^{p^\nu}x^2y^2+a_3(w^{2m})^{p^\nu}y^4 \\
      &+&a_4(u^m)^{p^\nu}x^2+a_5(w^m)^{p^\nu}y^2+a_6=0.\nonumber
\end{eqnarray}
Note that $\qq_P$ is an irreducible  quartic. In fact, $\qq_P$ is projectively equivalent to the curve $\cc$ given by
$$
a_1x^4+a_2x^2y^2+a_3y^4+a_4x^2z^2+a_5y^2z^2+a_6z^4=0.
$$
 The curve $\cc$, on the other hand, is nonsingular. Indeed if $(a:b:c)$ is a singular point of $\cc$, then $(\alpha:\beta:\gamma)$ is a singular point of $\xx$, where $\alpha,\beta,\gamma \in \kk$ are roots of $x^n=a^2, x^n=b^2$, and $x^n=c^2$ respectively. This contradicts the smoothness of $\xx$. 

Now for all $P=(u:w:1) \in \xx$ with $uw \neq 0$, 
\begin{eqnarray}
r(x,y) & = & r(x,y)-f(x,y) \nonumber \\
& = & a_1(u^{2m}-x^{2m})^{p^v}x^4+a_2(u^mw^m-x^my^m)^{p^v}x^2y^2 \nonumber \\
& + &a_3(w^{2m}-y^{2m})^{p^v}y^4+ a_4(u^m-x^m)^{p^v}x^2+a_5(w^m-y^m)^{p^v}y^2. \nonumber
\end{eqnarray}
 Then $I(P,\qq_P \cap \xx) \geq p^v$. Let $\hh_P^2$ be the osculating conic to $\xx$ at $P$. Since $\epsilon=p^s$, we have that $I(P,\hh_P^2 \cap \xx) \geq p^s$. However,  Lemma \ref{lema4}   with our assumption  that $p>7$ gives $$I(P,\hh_P^2 \cap \qq_P) \geq p \geq 11>8=deg(\hh_P^2)\cdot deg(\qq_P),$$ which implies, by  B\'ezout's Theorem, that $\hh_P^2$ is a component of $\qq_P$.  This contradicts  the irreducibility of $\qq_P$. Therefore, $\xx$ is classical.

Suppose $p|n+1$, and let $m,v>0$ be such that $n=mp^v-1$ and $\gcd(m,p)=1$. From $f(x,y)=0$ we obtain
\begin{eqnarray}
0 & = & f(x,y)x^2y^2 \Lra \nonumber \\
0 & = & a_1(x^{2m})^{p^v}y^2+a_2(x^my^m)^{p^v}xy+a_3(y^{2m})^{p^v}x^2 \nonumber \\
  & + &a_4(x^m)^{p^v}xy^2+a_5(y^m)^{p^v}x^2y+a_6x^2y^2. \nonumber
\end{eqnarray}

Consider a point $P=(u:w:1) \in \xx$ with $uw \neq 0$ and the projective closure $\qq^{'}_P \subset \p^2(\kk)$ of the curve given by $l(x,y)=0$, where
\begin{eqnarray}
l(x,y)&=&a_6x^2y^2+a_5(w^m)^{p^v}x^2y+a_4(u^m)^{p^v}xy^2 \nonumber \\
      &+&a_3(w^{2m})^{p^v}x^2+a_2(u^mw^m)^{p^v}xy+a_1(u^{2m})^{p^v}y^2. \nonumber
\end{eqnarray}
Since $a_6 \neq 0$,  $\qq^{'}_P$ is a quartic. Let $\alpha=u^{mp^v}$ and $\beta=w^{mp^v}$. Multiplying $l(x,y)$ by $1/\alpha^2 \beta^2$, we see that $\qq_P^{'}$ is the projective closure of the curve given by the equation
$$
a_6\frac{x^2y^2}{\alpha^2 \beta^2}+a_5\frac{x^2y}{\alpha^2 \beta}+a_4\frac{xy^2}{\alpha \beta^2}+a_3\frac{x^2}{\alpha^2}+a_2\frac{xy}{\alpha \beta}+a_1\frac{y^2}{\beta^2}=0.
$$
Hence $\qq^{'}_P$ is projectively equivalent to the curve $\mathcal{Y}$ given by
$$
H(x,y,z)=a_6x^2y^2+a_5x^2yz+a_4xy^2z+a_3x^2z^2+a_2xyz^2+a_1y^2z^2=0.
$$
Then Lemma \ref{quart irr} and Remark \ref{conic irred} imply that $\qq^{'}_P$ is irreducible.

Moreover,
\begin{equation}\nonumber
l(x,y)  =  l(x,y)-f(x,y)x^2y^2.
\end{equation}

Therefore, $I(P,\qq^{'}_P \cap \xx) \geq p^v\geq 11$. If $\hh_P^{2}$ is the osculating conic to $\xx$ at $P$, we have $I(P,\hh_P^{2} \cap \xx) \geq p^s\geq 11$. By Lemma \ref{lema4} and B\'ezout's Theorem, $\hh_P^{2}$ is a component of $\qq^{'}_P$.This is a contradiction, and thus the curve $\xx$ is classical.
\end{proof}

\begin{lem}\label{lema9}
If $\xx$ is nonclassical with respect to $\D_2$, then $p|(n-1)(2n-1)$.
\end{lem}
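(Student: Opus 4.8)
The plan is to assume that $\xx$ is nonclassical with respect to $\D_2$ and to force the divisibility by applying Proposition \ref{SV1.7} at a point whose $(\D_2,P)$-order sequence can be written down explicitly. Two facts will be used at the end. First, $p\nmid n$: if $p\mid n$ then every exponent occurring in $F$ is divisible by $p$ and, $\kk$ being perfect, $F$ would be a $p$-th power, contradicting the irreducibility of $\xx$. Second, $p\nmid(n+1)(n-2)$, by the contrapositive of Lemma \ref{lema10}.

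Next I would pick the point. A permutation of the coordinates changes neither the nonclassicality of $\xx$ with respect to $\D_2$ nor the integer $(n-1)(2n-1)$, so by assumption (4.iii) I may assume $a_2\neq 0$; then, as in the proof of Proposition \ref{inflex}, there is $P=(u:0:1)\in\xx$ with $u\neq 0$, tangent line $\ell_P: x-uz=0$, and $I(P,\xx\cap\ell_P)=n$. From the form of $f=F(x,y,1)$ together with the smoothness of $\xx$ one has $f_y(P)=0$ and $f_x(P)\neq 0$, so $y$ is a local parameter at $P$ and $v_P(x-u)=n$; write $x-u=c_ny^n+\cdots$ with $c_n\neq 0$. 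Substituting into a general conic $C:\ \alpha_0+\alpha_1x+\alpha_2y+\alpha_3x^2+\alpha_4xy+\alpha_5y^2=0$ gives, along $\xx$ near $P$,
$$C=(\alpha_0+\alpha_1u+\alpha_3u^2)+(\alpha_2+\alpha_4u)\,y+\alpha_5\,y^2+(\alpha_1+2\alpha_3u)c_n\,y^{\,n}+\cdots,$$
with no term between $y^2$ and $y^n$. Filtering $\D_2$ by the order of vanishing at $P$ — successively imposing $\alpha_0+\alpha_1u+\alpha_3u^2=0$, then $\alpha_2+\alpha_4u=0$, then $\alpha_5=0$, then $\alpha_1+2\alpha_3u=0$ (which leaves the reducible conics $(x-uz)\cdot(\text{line through }P)$), then $\alpha_4=0$ (which leaves the double line $(x-uz)^2$) — yields the $(\D_2,P)$-order sequence
$$(j_0,j_1,j_2,j_3,j_4,j_5)=(0,1,2,n,n+1,2n),$$
a strictly increasing sequence because $n>2$.

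I would then apply Proposition \ref{SV1.7} at $P$: nonclassicality of $\xx$ with respect to $\D_2$ forces $p$ to divide the integer $\prod_{0\le r<i\le 5}\frac{j_i-j_r}{i-r}$. Its denominator $\prod_{0\le r<i\le 5}(i-r)=1!\,2!\,3!\,4!\,5!$ has only $2,3,5$ as prime divisors, so since $p>7$ the prime $p$ divides the numerator $\prod_{0\le r<i\le 5}(j_i-j_r)$. Reading off the differences of $(0,1,2,n,n+1,2n)$, this numerator equals $\pm\,2^3\,n^4(n-1)^4(n+1)(n-2)(2n-1)$, hence $p\mid n(n-1)(n+1)(n-2)(2n-1)$. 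As $p$ is odd and, by the first paragraph, $p\nmid n$ and $p\nmid(n+1)(n-2)$, I conclude $p\mid(n-1)(2n-1)$.

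The step I expect to be the crux is the exact computation of the $(\D_2,P)$-order sequence: one must check that no conic meets $\xx$ at $P$ with multiplicity in $\{3,\dots,n-1\}\cup\{n+2,\dots,2n-1\}$, which is exactly the assertion that the functions $1,x,y,x^2,xy,y^2$, read in the local parameter $y$, have the displayed gap; this hinges on $x-u$ vanishing to order precisely $n$ at $P$. A more computational alternative is to expand the $6\times6$ Hasse--Wronskian of $1,x,y,x^2,xy,y^2$ with respect to $x$ and track the $p$-divisibility of its relevant coefficient, but that route is considerably heavier.
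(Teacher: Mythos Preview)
Your proof is correct and follows essentially the same route as the paper: locate the point $P$ with $(\D_1,P)$-orders $(0,1,n)$ furnished by Proposition~\ref{inflex}, determine that its $(\D_2,P)$-order sequence is $(0,1,2,n,n+1,2n)$, apply Proposition~\ref{SV1.7}, and then strip off the factors $n$, $n+1$, $n-2$ using irreducibility and Lemma~\ref{lema10}. The only difference is stylistic: where you carry out an explicit local-parameter expansion to pin down the $(\D_2,P)$-orders, the paper observes directly that degenerate conics (products of two lines with the prescribed $(\D_1,P)$-orders) already realize the six values $0,1,2,n,n+1,2n$, and since $\dim\D_2=5$ these must exhaust the order sequence.
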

\begin{proof}
By Proposition \ref{inflex}, there exists a point $P \in \xx$ with order sequence $(0,1,n)$ with respect to $\D_1$, i.e., $0$, $1$ and $n$ are  the possible intersection multiplicities of $\xx$ with a line at $P$. Hence there are degenerated conics in $\p^2(\kk)$ whose intersection multiplicities with $\xx$ at $P$ are $0,1,2,n,n+1$ and $2n$. Since $\D_2$ has projective dimension $5$, these are  the possible intersection multiplicities of $\xx$ with a conic at $P$. In other words, the order sequence of $P$ with respect to $\D_2$ is $(0,1,2,n,n+1,2n)$. Thus by Proposition \ref{SV1.7}, $p$ divides   $n(n-1)(2n-1)(n+1)(n-2)$. Since the irreducibility of $\xx$ together with Lemma \ref{lema10} gives $p\nmid n(n+1)(n-2)$, the result follows.
\end{proof}

The next two lemmas will address the converse of Lemma \ref{lema9}.

\begin{lem}\label{lema11}
If $p|n-1$, then $\xx$ is nonclassical with respect to $\D_2$.
\end{lem}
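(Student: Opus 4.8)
The plan is to observe that, once we use $p\mid n-1$ to absorb $p^v$-th powers, the defining equation of $\xx$ becomes a visible linear dependence among the coordinate functions of the Veronese morphism $\phi_2$ over a subfield of $p^v$-th powers; Theorem \ref{GV1-1} then forces the top $\D_2$-order of $\xx$ to be at least $p^v>M$. Concretely, since $p\mid n-1$ we may write $n=mp^v+1$ with $v\geq 1$ and $\gcd(m,p)=1$. Writing $f(x,y)=F(x,y,1)$ and using $x^{2n}=(x^{2m})^{p^v}x^2$, $x^ny^n=(x^my^m)^{p^v}xy$, $x^n=(x^m)^{p^v}x$, together with the analogous identities for the remaining monomials, the relation $f(x,y)=0$ in $\kk(\xx)$ takes the form
\begin{equation*}
a_1(x^{2m})^{p^v}x^2+a_2(x^my^m)^{p^v}xy+a_3(y^{2m})^{p^v}y^2+a_4(x^m)^{p^v}x+a_5(y^m)^{p^v}y+a_6=0,
\end{equation*}
which is precisely the $s=2$ instance of $(\ref{ld})$.

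The key step is to read this identity as a nontrivial linear relation, over the subfield $(\kk(\xx))_v$ of $p^v$-th powers, among the six functions $1,x,y,x^2,xy,y^2$, i.e., the coordinate functions $f_0,\dots,f_5$ of $\phi_2$. By Theorem \ref{GV1-1}, no integers $0=\epsilon_0<\cdots<\epsilon_5<p^v$ can satisfy $\det\bigl(D_t^{(\epsilon_i)}f_j\bigr)_{0\le i,j\le 5}\neq 0$ for a separating variable $t$ of $\kk(\xx)$; hence the $\D_2$-order sequence $(\epsilon_0,\dots,\epsilon_5)$ of $\xx$ satisfies $\epsilon_5\geq p^v$. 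Since assumption (4.iv) gives $p>7$ and $v\geq 1$, we obtain $\epsilon_5\geq p^v\geq p>5=M=\dim\D_2$, so $(\epsilon_0,\dots,\epsilon_5)\neq(0,1,2,3,4,5)$ and $\xx$ is nonclassical with respect to $\D_2$.

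I expect no genuine obstacle: the statement is exactly the $s=2$ case of Lemma \ref{lema5}, whose hypotheses (3.i)--(3.iii) hold here because $s=2$, because $p\mid n-1$ is assumed, and because (4.iv) gives $p>5$; so one may alternatively just cite that lemma. The only point deserving a word is the passage from $\epsilon_5\geq p^v$ to $\epsilon_5>M$, which is immediate from $p>7>5=M$ and $v\geq 1$.
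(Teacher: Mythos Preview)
Your proposal is correct and is essentially the paper's approach: the paper's proof simply says ``It follows immediately from Lemma \ref{lema5} applied to the case $s=2$,'' and what you have written is exactly the nonclassicality portion of that lemma's proof specialized to $s=2$, together with the observation (which you also make) that one can just cite Lemma \ref{lema5} directly since hypotheses (3.i)--(3.iii) are ensured by $s=2$, the assumption $p\mid n-1$, and (4.iv).
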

\begin{proof}
It follows immediately from Lemma \ref{lema5} applied to the case $s=2$.
\end{proof}

\begin{lem}\label{teo 14}
Assume that $p|2n-1$. The curve $\xx$ is nonclassical with respect to $\D_2$ if and only if all but one of the coefficients $a_2$, $a_4$ and $a_5$ are zero.
\end{lem}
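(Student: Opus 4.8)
The plan is to handle the two directions separately, with the substantive content living in the ``only if'' part. For the easy direction, suppose all but one of $a_2,a_4,a_5$ vanish; up to permuting the coordinates $x,y,z$ (which preserves the shape of $F$), we may assume $a_2$ is the surviving coefficient, so that $F=a_1x^{2n}+a_2x^ny^n+a_3y^{2n}+a_6z^{2n}$. Writing $2n-1=mp^v$ with $\gcd(m,p)=1$, I would multiply $f(x,y)=F(x,y,1)$ by a suitable monomial in $x,y$ so that every exponent becomes a multiple of $p^v$, exhibit the resulting polynomial as a $p^v$-th power, and apply Theorem \ref{GV1-1} (in the same way as in the proof of Lemma \ref{lema5}) to conclude that the $\D_2$-order sequence has $\epsilon_5\geq p^v>5$; hence $\xx$ is nonclassical with respect to $\D_2$. (One must check the multiplying monomial has degree small enough that the relevant functions in $\kk(\xx)$ are genuinely among the coordinate functions of the Veronese embedding, or rather linear combinations/products thereof — this is the analogue of the bookkeeping done with $x^2y^2$ in the $p\mid n+1$ case of Lemma \ref{lema10}.)

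For the ``only if'' direction, assume $\xx$ is nonclassical with respect to $\D_2$ and $p\mid 2n-1$; the goal is to force at least two of $a_2,a_4,a_5$ to be zero. The strategy mirrors Lemma \ref{lema10}: from $p\mid 2n-1$ write $2n=mp^v+1$, manipulate $f(x,y)=0$ (multiplying through by $z^{\text{(something)}}$, i.e.\ by a monomial) to produce a polynomial $l(x,y)$ whose exponents are all divisible by $p^v$, so $l=r^{p^v}$ for an explicit quartic-type polynomial $r$ depending on a generic point $P=(u:w:1)\in\xx$. Then $I(P,\qq_P\cap\xx)\geq p^v\geq 11>8$, and since $\epsilon_5=p^s$ for some $s$ (by \cite[Proposition 2]{GV1}) the osculating conic $\hh_P^2$ meets $\xx$ at $P$ with multiplicity $\geq p^s\geq 11$; Lemma \ref{lema4} and B\'ezout then show $\hh_P^2$ is a component of $\qq_P$ — \emph{unless} $\qq_P$ is reducible. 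So, exactly as in Lemma \ref{lema10}, one is forced to conclude that $\qq_P$ must be reducible for the generic $P$, and the whole point is that this reducibility is a strong constraint on the $a_i$.

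The main obstacle, and the place where real work is needed, is the analysis of when the relevant quartic is reducible. Concretely, after the substitution $2n=mp^v+1$ the polynomial $f(x,y)$ has a mixture of odd and even exponents, so the auxiliary curve that appears is no longer the ``all even exponents'' quartic $a_1x^4+a_2x^2y^2+a_3y^4+a_4x^2z^2+a_5y^2z^2+a_6z^4$ of Lemma \ref{lema10}; it is projectively equivalent to a quartic of the form $a_1x^4 + a_2x^3y+\cdots$ (roughly, the monomials $x^{2i}y^{2j}z^{2t}$ get replaced by $x^{i}y^{j}z^{t}$ times a fixed monomial, after clearing denominators). I would identify this quartic explicitly, note (as in Lemma \ref{lema5}, via the argument lifting singular points along $x\mapsto x^n$) that if $\xx$ is smooth the quartic is smooth hence irreducible \emph{provided} it is not identically of a degenerate shape, and then check that degeneracy — i.e.\ the quartic failing to be a genuine plane quartic, or splitting off a linear/conic factor — happens precisely when two of $a_2,a_4,a_5$ vanish. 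This case-by-case reducibility computation (likely appealing to the appendix results on irreducibility of plane quartics, Lemma \ref{quart irr}) is the crux; everything else is a transcription of the machinery already set up in Lemmas \ref{lema5} and \ref{lema10}. I would also double-check the boundary hypotheses $p>7$ and $n>2$ are exactly what is needed to run the B\'ezout step ($p^v\geq 11>8$) and to guarantee $2n-1$ and $n-1$ are not simultaneously divisible by $p$.
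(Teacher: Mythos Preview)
Your overall architecture is right --- build an auxiliary quartic $\qq_P$ through a generic point, force $I(P,\qq_P\cap\xx)\geq p^v$, and use Lemma \ref{lema4} plus B\'ezout to trap $\hh_P^2$ inside $\qq_P$ --- but the mechanism you propose for producing $\qq_P$ does not work, and the irreducibility analysis is more delicate than you anticipate.

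The ``multiply $f$ by a monomial so every exponent becomes divisible by $p^v$'' idea fails in both directions. With $2n=mp^v+1$ the exponent $n$ satisfies $n\equiv (p^v+1)/2\pmod{p^v}$, so no shift $x^ay^b$ can make the exponents in $x^{2n},x^ny^n,y^{2n},1$ simultaneously $\equiv 0$: from the constant term $a\equiv b\equiv 0$, whence $2n\equiv 0$, contradicting $2n\equiv 1$. The paper's device is different. For the ``if'' direction (say $a_2\neq 0$, $a_4=a_5=0$) one isolates the cross term, $-a_2x^ny^n=a_1x^{2n}+a_3y^{2n}+a_6$, and \emph{squares}; now every exponent of $x$ or $y$ appearing is a multiple of $2n$, hence $\equiv 0$ or $1\pmod{p^v}$, and the relation takes the shape $(\cdots)^{p^v}xy=\big((\cdots)^{p^v}x+(\cdots)^{p^v}y+(\cdots)^{p^v}\big)^2$, to which Theorem \ref{GV1-1} applies directly. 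For the ``only if'' direction (normalizing $a_1=a_3=a_6=1$) one first multiplies $f$ by its ``conjugate'' $x^{2n}-a_2x^ny^n+y^{2n}-a_4x^n+a_5y^n+1$ to kill the terms odd in $x^n$, and \emph{then} squares; only after this two-step manipulation do the exponents fall into the required pattern, and the quartic $\qq_P$ one extracts is the one in (\ref{eq5}).

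Second, the quartic $\qq_P$ so obtained is \emph{not} of the form $G(x^2,y^2,z^2)$ and is not smooth, so your proposed singular-point-lifting argument from Lemma \ref{lema5} does not apply, and Lemma \ref{quart irr} concerns a different family. The paper shows $\qq_P$ is projectively equivalent to the special quartic (\ref{eq 1}) with $(b,d,e)=(a_2,a_4,a_5)$, and invokes Theorem \ref{main ap}: that quartic is reducible iff at least two of $a_2,a_4,a_5$ vanish \emph{or} $a_2^2+a_4^2+a_5^2-a_2a_4a_5=4$. This second alternative does not come for free; it is ruled out because it is exactly the vanishing of the determinant in (\ref{det conic}), contradicting the smoothness of $\xx$. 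Only after this extra case is excluded does the B\'ezout contradiction go through.
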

\begin{proof}
Let $m,v$ be such that $2n=mp^v+1$ and $\gcd(m,p)=1$. Assume that all but one of the coefficients $a_2$, $a_4$ and $a_5$ are zero. We may suppose that $F(x,y,z)=a_1x^{2n}+a_2x^ny^n+a_3y^{2n}+a_6z^{2n}$ with $a_2 \neq 0$ (the other two cases are analogous). We have 
\begin{eqnarray}\label{ld}
0&=& a_1x^{2n}+a_2x^ny^n+a_3y^{2n}+a_6\Lra \nonumber\\
-a_2x^ny^n&=&a_1(x^m)^{p^v}x+a_3(y^m)^{p^v}y+a_6 \Lra \nonumber \\
(a_2^{2/p^v} x^my^m)^{p^v}xy&=&((a_1^{1/p^v}x^m)^{p^v}x+(a_3^{1/p^v}y^m)^{p^v}y+(a_6^{1/p^v})^{p^v})^2. 
\end{eqnarray}
Since $\xx$ is classical with respect to $\D_1$, the $\D_2$-order sequence of $\xx$ is $(0,1,2,3,4,\epsilon)$ for some $\epsilon \geq 5$. In view of (\ref{ld}), Theorem \ref{GV1-1} implies  $\epsilon \geq p^v>5$. Hence $\xx$ is nonclassical for $\D_2$.

Now assume  $\xx$ is nonclassical and suppose that at least two of the constants $a_2,a_4$ and $a_5$ are nonzero. Recall that the smoothness of $\xx$ implies $a_1a_3a_6\neq 0$, and then after scaling, we may set $a_1=a_3=a_6=1$. Thus since
$f(x,y)=x^{2n}+a_2x^ny^n+y^{2n}+a_4x^n+a_5y^n+1=0 \in \kk(\xx)$, we have that
$$(x^{2n}+a_2x^ny^n+y^{2n}+a_4x^n+a_5y^n+1)(x^{2n}-a_2x^ny^n+y^{2n}-a_4x^n+a_5y^n+1)=0,$$
and then
\begin{eqnarray}\label{eq1}
& &x^{4n} + (2-a_2^2 )x^{2n}y^{2n}  + (2-a_4^2 )x^{2n} +  y^{4n} + (a_5^2 + 2)y^{2n} + 1 \\
&=&2y^n((a_2a_4-a_5)x^{2n} -a_5y^{2n} -a_5).\nonumber
\end{eqnarray}
Squaring both sides of (\ref{eq1}) yields

\begin{eqnarray}\label{eq2}
&&\Big((x^{2m})^{p^v}x^2 + (2-a_2^2 )(x^{m}y^{m})^{p^v}xy  + (2-a_4^2 )(x^{m})^{p^v}x  \nonumber\\
&+&  (y^{2m})^{p^v}y^2 + (a_5^2 + 2)(y^{m})^{p^v}y + 1\Big)^2 \\
&=&4(y^{m})^{p^v}y\Big((a_2a_4-a_5)(x^{m})^{p^v}x -a_5(y^{m})^{p^v}y -a_5\Big)^2.\nonumber
\end{eqnarray}

Let $P=(u:w:1) \in \xx$ with $uw \neq 0$ and $\qq_P$ be the projective closure of the quartic given by $r(x,y)=0$, where
\begin{eqnarray}\label{eq5}
r(x,y)&=&\Big((u^{2m})^{p^v}x^2 + (2-a_2^2 )(u^{m}w^{m})^{p^v}xy  + (2-a_4^2 )(u^{m})^{p^v}x  \nonumber \\
      &+&  (w^{2m})^{p^v}y^2 + (a_5^2 + 2)(w^{m})^{p^v}y + 1\Big)^2 \\
&-&4(w^{m})^{p^v}y\Big((a_2a_4-a_5)(u^{m})^{p^v}x -a_5(w^{m})^{p^v}y -a_5\Big)^2.\nonumber
\end{eqnarray}

We claim that $\qq_P$ is irreducible. In fact,  via $(x:y:z)\mapsto (u^{mp^v}x:w^{mp^v}y:z)$,  the quartic $\qq_p$ is projectively equivalent to 
$$
\Big((x +  y+  z)^2-a_2^2xy  -a_4^2 xz  + a_5^2yz \Big)^2-4\Big((a_2a_4 - a_5)x - a_5y - a_5z\Big)^2yz=0.
$$
 Thus if $\qq_P$ is reducible, then Theorem \ref{main ap} implies that $a_2^2 + a_4^2 + a_5^2 - a_2a_4a_5=4$ (since we are assuming that at least two of the constants $a_2,a_4$ and $a_5$ are nonzero). But then 

$$
\left|
\begin{array}{ccc}
1 & a_2/2 & a_4/2 \\
a_2/2 & 1 & a_5/2 \\
a_4/2 & a_5/2 & 1
\end{array}
\right|=\frac{a_2a_4a_5-(a_2^2+a_4^2+a_5^2)}{4}+1=0,
$$
which is a contradiction to (\ref{det conic}).

Hence using the same arguments in  the proof of Lemma \ref{lema10}, we get $I(P,\qq_P \cap \xx) \geq p$. Since $\xx$ is classical with respect to $\D_1$ and nonclassical with respect to $\D_2$, by \cite[Proposition 2]{GV1}  the order sequence of $\xx$ with respect to $\D_2$ is $(0,1,2,3,4,p^s)$ for some $s>0$. Therefore, if $\hh_P^2$ is the osculating conic to $\xx$ at $P$, we have $I(P,\hh^2_P \cap \xx) \geq p^s$. Using Lemma \ref{lema4}, as in the previous cases, we obtain a contradiction by B\'ezouts Theorem since we are assuming that $p>7$. 
\end{proof}
\noindent
{\bf Proof of Theorem \ref{teo12}}
It follows directly   from Lemmas \ref{lema9},  \ref{lema11}, and \ref{teo 14}. \qed

We use the following lemmas to build our proof of Theorem \ref{main frob}.

\begin{lem}\label{teo13}
Assume that $p|n-1$. Then $\xx$ is $\fq$-Frobenius nonclassical with respect to $\D_2$ if  and only if $n=\frac{p^h-1}{p^v-1}$, with $h>v$, $v|h$ and $\xx$ is defined over $\F_{p^v}$.
\end{lem}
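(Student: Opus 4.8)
The plan is to derive the lemma from Theorem~\ref{teo6} and Corollary~\ref{cor7}: under the hypothesis $p\mid n-1$, the curve $\xx$ given by (\ref{xs2}) is precisely a member of the family studied in Section~\ref{The general case} with $s=2$, and all the standing hypotheses there are met (indeed $p>7>5$, so (3.iii) holds, and $\xx$ is smooth of degree $2n$ with $n>2$). For the ``if'' direction I would just note that $n=\frac{p^h-1}{p^v-1}$ with $v\mid h$ gives $n=1+p^v+p^{2v}+\cdots+p^{h-v}\equiv 1\pmod{p^v}$, hence $p\mid n-1$, so $\xx$ meets the hypotheses of Corollary~\ref{cor7} with $s=2$; that corollary then yields at once that $\xx$ is $\fq$-Frobenius nonclassical with respect to $\D_2$.

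For the ``only if'' direction, assume $\xx$ is $\fq$-Frobenius nonclassical with respect to $\D_2$. Since $p\mid n-1$, Lemma~\ref{lema11} (which is Lemma~\ref{lema5} in the case $s=2$) shows that $\xx$ is nonclassical with respect to $\D_2$, while Proposition~\ref{prop8} shows that $\xx$ is classical with respect to $\D_1$; hence the $\D_2$-order sequence of $\xx$ is $(\epsilon_0,\ldots,\epsilon_5)=(0,1,2,3,4,\epsilon_5)$ with $\epsilon_5\geq 5$. Now the $\fq$-Frobenius order sequence $(\nu_0,\ldots,\nu_4)$ is, by definition, the lexicographically least tuple making the determinant (\ref{fncl}) — formed from the Veronese coordinate functions $1,x,y,x^2,xy,y^2$ and a separating variable $\tau$ of $\fq(\xx)$ — nonzero, and one knows that $\nu_i\geq\epsilon_i$ for all $i$, with $\{\nu_0,\ldots,\nu_4\}=\{\epsilon_0,\ldots,\epsilon_5\}\setminus\{\epsilon_I\}$ for some $I$. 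If $\xx$ is $\fq$-Frobenius nonclassical, then $(\nu_0,\ldots,\nu_4)\neq(0,1,2,3,4)=(\epsilon_0,\ldots,\epsilon_4)$; combining this with $\nu_i\geq\epsilon_i$ and the lexicographic minimality of $(\nu_i)$ forces the determinant (\ref{fncl}) with $(\nu_0,\ldots,\nu_4)$ replaced by $(\epsilon_0,\ldots,\epsilon_4)$ to vanish identically in $\fq(\xx)$ (equivalently, $\nu_4=\epsilon_5$). By \cite[Corollary~1.3]{SV} this is equivalent to $\Phi_q(P)$ lying on the osculating conic $\hh_P^2$ of $\xx$ at $P$ for all but finitely many $P\in\xx$; in particular it holds for infinitely many points $P=(a:b:c)$ with $abc\neq 0$, for which $\hh_P^2$ is written out explicitly in Lemma~\ref{lema5}. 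Theorem~\ref{teo6} then gives $n=\frac{p^h-1}{p^v-1}$ with $h>v$, $v\mid h$, and $\xx$ defined over $\F_{p^v}$, which completes the proof.

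The one step that needs care is the equivalence ``$\fq$-Frobenius nonclassical with respect to $\D_2$'' $\Longleftrightarrow$ ``$\Phi_q(P)\in\hh_P^2$ for infinitely many $P$'': it rests on the fact that the $\fq$-Frobenius order sequence is obtained from $\{\epsilon_0,\ldots,\epsilon_5\}$ by deleting a single term (so that failure of Frobenius classicality is exactly the failure of $(\nu_i)=(\epsilon_0,\ldots,\epsilon_4)$, and hence forces the vanishing of the above determinant), on the identification of the osculating hyperplane of $\phi_2(\xx)\subset\p^5$ with the osculating conic in $\p^2$, and on \cite[Corollary~1.3]{SV}. All of these are already in use in the proofs of Lemma~\ref{lema5}, Theorem~\ref{teo6} and Corollary~\ref{cor7}, so beyond this bookkeeping the lemma presents no genuinely new difficulty.
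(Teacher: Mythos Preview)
Your proof is correct and follows essentially the same route as the paper's: the ``if'' direction is Corollary~\ref{cor7} with $s=2$, and the ``only if'' direction reduces, via the vanishing of the determinant~(\ref{fncl}) and \cite[Corollary~1.3]{SV}, to the geometric statement $\Phi_q(P)\in\hh_P^2$ for infinitely many $P$, whence Theorem~\ref{teo6} applies. The only cosmetic difference is that you obtain the nonclassicality of $\xx$ for $\D_2$ directly from Lemma~\ref{lema11} (using $p\mid n-1$), whereas the paper deduces it from the assumed $\fq$-Frobenius nonclassicality via Proposition~\ref{fnc impl nc}; both arguments are valid.
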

\begin{proof}
If $n=\frac{p^h-1}{p^v-1}$, with $h>v$, $v|h$ and $\xx$ is defined over $\F_{p^v}$, by Corollary \ref{cor7} applied in the case $s=2$, $\xx$ is $\fq$-Frobenius nonclassical with respect to $\D_2$. For the converse, note that by Proposition \ref{fnc impl nc},  $\xx$ must be  nonclassical with respect to $\D_2$. Since $\xx$ is classical with respect to $\D_1$ (Proposition \ref{prop8}), its $\D_2$-order sequence is $(0,1,2,3,4,\epsilon)$, where $\epsilon>5$. The $\fq$-Frobenius nonclassicality of $\xx$ with respect to $\D_2$ is equivalent to

\begin{displaymath}
\left|
  \begin{array}{cccccc}
  1 & x^q &y^q&x^{2q}& x^qy^q & y^{2q} \\
  1 & x & y& x^2& xy& y^2 \\
  0 & D_\tau^{(1)}(x) & D_\tau^{(1)}(y)& D_\tau^{(1)}(x^2)&D_\tau^{(1)}(xy)&D_\tau^{(1)}(y^2)\\
  0 & D_\tau^{(2)}(x) & D_\tau^{(2)}(y)& D_\tau^{(2)}(x^2)&D_\tau^{(2)}(xy)&D_\tau^{(2)}(y^2)\\
  0 & D_\tau^{(3)}(x) & D_\tau^{(3)}(y)& D_\tau^{(3)}(x^2)&D_\tau^{(3)}(xy)&D_\tau^{(3)}(y^2)\\
  0 & D_\tau^{(4)}(x) & D_\tau^{(4)}(y)& D_\tau^{(4)}(x^2)&D_\tau^{(4)}(xy)&D_\tau^{(4)}(y^2)
  \end{array}
  \right| = 0,
  \end{displaymath}
where $\tau$ is a separating variable of $\fq(\xx)$. Then by \cite[Corollary 1.3]{SV}  $\Phi_q(P) \in \hh_P^{2}$ for infinitely many points of $\xx$. Hence the result follows from Theorem \ref{teo6}.
\end{proof}

The next lemma follows from \cite[Theorem 3.2]{FM}.

\begin{lem}\label{pol}
Let $K$ be an arbitrary field. Consider nonconstant polynomials $b_1(x), b_2(x) \in K[x]$, and let $l$ and $m$ be positive integers. Then $$y^l-b_1(x) \text{  divides  } y^m-b_2(x)$$ if  and only if $l|m$ and $b_2(x)=b_1(x)^{\frac{m}{l}}$.
\end{lem}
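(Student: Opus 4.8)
The plan is to prove the two implications separately, treating $y^{l}-b_1(x)$ as a \emph{monic} polynomial in the variable $y$ over the ring $K[x]$, so that the ordinary division algorithm is available in $K[x][y]$; note that divisibility there is equivalent to divisibility in $K(x)[y]$, since the divisor is monic in $y$.

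For the ``if'' direction, assume $l\mid m$ and set $k=m/l$. If $b_2(x)=b_1(x)^{k}$, then
$$y^{m}-b_2(x)=\left(y^{l}\right)^{k}-\left(b_1(x)\right)^{k},$$
and since $A^{k}-B^{k}$ is divisible by $A-B$ in any commutative ring, $y^{l}-b_1(x)$ divides $y^{m}-b_2(x)$. No hypothesis on $b_1,b_2$ is needed here.

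For the ``only if'' direction, suppose $y^{l}-b_1(x)$ divides $y^{m}-b_2(x)$ and write $m=ql+r$ with $0\le r<l$. Reduction modulo $y^{l}-b_1(x)$ replaces $y^{l}$ by $b_1(x)$, so $y^{m}$ reduces to $b_1(x)^{q}y^{r}$; hence the remainder of $y^{m}-b_2(x)$ upon division by $y^{l}-b_1(x)$ is
$$R(x,y)=b_1(x)^{q}y^{r}-b_2(x),$$
a polynomial of $y$-degree strictly less than $l$, and divisibility forces $R=0$. If $r>0$, then since $b_2(x)$ only contributes to the $y^{0}$-term and $0<r<l$, comparing the coefficient of $y^{r}$ gives $b_1(x)^{q}=0$, contradicting that $b_1$ is nonconstant, hence nonzero. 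Thus $r=0$, i.e. $l\mid m$ with $q=m/l$, and $R=0$ then yields $b_2(x)=b_1(x)^{m/l}$.

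I expect no serious obstacle: the only subtlety is that the argument genuinely relies on $b_1\neq 0$ (supplied by the nonconstancy assumption), since for $b_1=0$ the divisibility $y^{l}\mid y^{m}$ would only force $l\le m$ rather than $l\mid m$. One could equivalently run the computation inside the free $K[x]$-module $K[x][y]/(y^{l}-b_1(x))$ with basis $1,y,\dots,y^{l-1}$ and compare components, but the division-algorithm formulation above is the most economical; either way this recovers the statement of \cite[Theorem 3.2]{FM} in the form we need.
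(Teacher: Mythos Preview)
Your proof is correct. The paper does not actually supply a proof of this lemma: it merely states that the result follows from \cite[Theorem~3.2]{FM} (Fried--MacRae). Your argument, by contrast, is a short self-contained proof via the division algorithm for monic polynomials in $K[x][y]$, exploiting only that $K[x]$ is an integral domain so that $b_1(x)^{q}\neq 0$ (including the trivial case $q=0$, where $b_1^{0}=1$). This buys independence from the external reference at essentially no cost, and is arguably preferable here since the full strength of the Fried--MacRae theorem is not needed for the special shape $y^{l}-b_1(x)$.
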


\begin{lem}\label{teo 15}
Assume that $p|2n-1$. The curve $\xx$ is $\fq$-Frobenius nonclassical with respect to $\D_2$ if and only if all but one of the coefficients $a_2$, $a_4$ and $a_5$ are zero, $n=\frac{q-1}{2(p^v-1)}$ for some integer $v<h$ with $v|h$, and up to an $\fq$-scaling of the coordinates, the curve $\xx$ is defined over $\F_{p^v}$.
\end{lem}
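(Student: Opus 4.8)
The plan is to mirror the structure of Theorem \ref{teo6} and Lemma \ref{teo13}, but now with the osculating conic having contact order governed by $2n-1$ rather than $n-1$, which forces the Fermat-type constraint on the coefficients from the start. First I would invoke Proposition \ref{fnc impl nc} (legitimate since $p>7>5=M$): if $\xx$ is $\fq$-Frobenius nonclassical with respect to $\D_2$, then it is nonclassical with respect to $\D_2$. Combined with Lemma \ref{lema9} and Theorem \ref{teo12}, the hypothesis $p\mid 2n-1$ together with $p\nmid n-1$ (which we may assume, since $p\mid\gcd(n-1,2n-1)=1$ is impossible) forces that all but one of $a_2,a_4,a_5$ vanish; say $a_2\neq 0$ and $a_4=a_5=0$ (the cases $a_4\neq 0$ or $a_5\neq 0$ being obtained by an $\fq$-linear change of the coordinates $x,y,z$, which is the role of the phrase ``up to an $\fq$-scaling''). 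So after scaling we are reduced to $F=a_1x^{2n}+a_2x^ny^n+a_3y^{2n}+a_6z^{2n}$ with $a_1a_2a_3a_6\neq 0$.

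Next I would identify the osculating conic explicitly. Write $2n=mp^v+1$ with $\gcd(m,p)=1$; from equation (\ref{ld}) in the proof of Lemma \ref{teo 14} we already know $\epsilon\geq p^v$, and by the same Bézout-type argument used there (Lemma \ref{lema4} plus irreducibility of the quartic $\qq_P$), the osculating conic $\hh_P^2$ at a generic point $P=(u:w:1)$ is the conic obtained by substituting $(x,y,z)\mapsto(u^{mp^v},w^{mp^v},1)$-twisted monomials, i.e. $\hh_P^2$ has equation proportional to
\[
a_1^{1/p^v}(u^{2m})^{?}x^2 + a_2^{1/p^v}(u^mw^m)^{?}xy + a_3^{1/p^v}(w^{2m})^{?}y^2 + \cdots,
\]
more precisely the conic cut out by taking the $p^v$-th root side of (\ref{ld}). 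The condition $\Phi_q(P)\in\hh_P^2$ for infinitely many $P$ then translates, exactly as in (\ref{nula})--(\ref{polyg}), into the statement that $f(x,y)=F(x,y,1)$ divides a certain explicit polynomial
\[
g(x,y) = a_1^{1/p^v}x^{m+2q/?}\cdots
\]
whose monomials involve exponents of the form $\tfrac{mp^v+1}{2}\cdot(\text{stuff})+q\cdot(\text{stuff})$; the key is that because only the $x^ny^n$ cross term survives, $g$ is (after taking a $p^v$-th root, which requires checking $p^v\mid$ the relevant exponent sum, as in Theorem \ref{teo6}) of the shape $y^{2l}-b(x)$ with $b$ a polynomial in $x$, while $f$ is likewise of the shape $y^{2n}-(\text{polynomial in }x)$. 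This is precisely the situation handled by Lemma \ref{pol} (and Lemma A-type results on irreducibility of the twisted conic/quartic, to guarantee $f$ really is irreducible of that bidegree form): $f\mid g$ forces $2n\mid 2l$ and the coefficient polynomials to be perfect powers, and unwinding the exponent arithmetic yields $n(p^v-1)=\tfrac{q-1}{2}\cdot(\text{something})$, i.e. $n=\frac{q-1}{2(p^v-1)}$, while matching coefficients gives $a_i=\alpha\, a_i^{1/p^v}$, hence all ratios $a_i/a_j\in\F_{p^v}$, i.e. $\xx$ is defined over $\F_{p^v}$. For the converse, one checks that under $n=\frac{q-1}{2(p^v-1)}$ and $a_i\in\F_{p^v}$ we get $mp^v+q = 2n-1+q = \tfrac{q-1}{p^v-1}\cdot p^v$ type identity making $g(x,y)=f(x,y)^{p^v}$ (up to the squaring bookkeeping forced by $2n=mp^v+1$), so $f\mid g$ and Corollary \ref{cor7}-style reasoning (via \cite[Corollary 1.3]{SV}) gives $\fq$-Frobenius nonclassicality; the condition $v<h$, $v\mid h$ is automatic from integrality of $n$.

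I expect the main obstacle to be the exponent bookkeeping caused by the ``$2n=mp^v+1$'' relation: unlike in Theorem \ref{teo6} where $n+q-1$ was divisible by $p^v$ rather transparently, here one must square the defining equation (as in (\ref{eq1})--(\ref{eq2})) to linearize the $x^ny^n$ term over $(\kk(\xx))_{p^v}$, and then carefully track that the resulting polynomial identity $f\mid g$ can only be reconciled via Lemma \ref{pol} after confirming $p^v < q$ (the same ``degree of $l$ too small'' contradiction as in Theorem \ref{teo6}) and that the relevant exponent $m+p^{?}$ is divisible appropriately so that a $p^v$-th root exists. A secondary subtlety is the ``up to $\fq$-scaling'' clause: I would make precise at the outset which monomial changes of variable in $\mathrm{PGL}_3(\fq)$ permute the three cases $a_2\neq 0$, $a_4\neq 0$, $a_5\neq 0$ and preserve the shape (\ref{xs2}), so that it genuinely suffices to treat $a_2\neq 0$. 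The irreducibility inputs (that the twisted conics and quartics appearing are irreducible, hence $f$ has the claimed irreducible form) are exactly the appendix results the introduction promised, invoked as in Lemma \ref{teo 14}.
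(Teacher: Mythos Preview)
Your high-level plan --- reduce Frobenius nonclassicality to nonclassicality via Proposition~\ref{fnc impl nc}, extract the coefficient constraint from Lemma~\ref{teo 14}, and then convert the Frobenius condition into a polynomial divisibility handled by Lemma~\ref{pol} --- matches the paper. The main computational difference is that the paper does \emph{not} pass through the osculating conic and the condition $\Phi_q(P)\in\hh_P^2$. Instead it works with the choice $a_4\neq 0$ (giving the affine model $y^{2n}=ax^{2n}+bx^n+c$), computes the $5\times 5$ Frobenius--Wronskian determinant $W$ directly from the explicit Hasse derivatives $D_x^{(i)}(y)$, and obtains the clean factorisation
\[
W=\tfrac{b^2x^{2n-6}}{1024\,y^{8n-4}}\cdot W_1\cdot W_2,\qquad W_{1,2}=y^{2n+q-1}-ax^{2n+q-1}\mp bx^{(2n+q-1)/2}-c,
\]
to which Lemma~\ref{pol} applies immediately. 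Your osculating-conic route, if carried out, leads to the same product $W_1W_2$ (the candidate conic at $P=(u:w:1)$ is $(w^{mp^v}y-au^{mp^v}x-c)^2=b^2u^{mp^v}x$, and substituting $\Phi_q(P)$ recovers $W_1W_2$), so the approaches converge; but note that your choice $a_2\neq 0$ with dehomogenisation in $z$ does \emph{not} put $f$ in the shape $y^{2n}-(\text{poly in }x)$ that Lemma~\ref{pol} needs --- you would have to dehomogenise in $y$, or follow the paper and take $a_4\neq 0$.

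There is, however, a genuine gap in your interpretation of the ``up to an $\fq$-scaling'' clause. You read it as the symmetry permuting which of $a_2,a_4,a_5$ is nonzero, but that is not its role. In the paper, the two factors $W_1,W_2$ give two cases: $f\mid W_1$ forces $a,b,c\in\F_{p^v}$ outright, whereas $f\mid W_2$ forces $a,c\in\F_{p^v}$ but only $b^{p^v-1}=-1$. It is this second case that requires an $\fq$-scaling $x\mapsto\alpha x$ (with $\alpha^{2n}=b^2$, possible by surjectivity of the norm) to bring $\xx$ over $\F_{p^v}$. Your sketch, with its single polynomial $g$ and the expectation that ``matching coefficients gives $a_i=\alpha\,a_i^{1/p^v}$'', does not anticipate this dichotomy, and your proposed treatment of the scaling clause would not account for it. This is the substantive point your proof plan is missing, quite apart from the unexecuted exponent bookkeeping.
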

\begin{proof},
Suppose that $\xx$ is $\fq$-Frobenius nonclassical. By Proposition \ref{fnc impl nc}, the curve $\xx$ is nonclassical and therefore, by Theorem \ref{teo 14},  all but one of the coefficients $a_2$, $a_4$, and $a_5$ are zero. We can assume, without loss of generality, that $a_4 \neq 0$. Dehomogenizing $F(x,y,z)$ with respect to $z$ and setting $a:=-a_1/a_3$, $b:=-a_4/a_3$, and $c:=-a_6/a_3$, we obtain that $\xx$ is given by the affine equation
\begin{equation}\label{eq nova}
y^{2n}=ax^{2n}+bx^n+c.
\end{equation}
Since $p \nmid 2n$, we have that $x$ is a separating variable of $\fq(\xx)$. The assumption that $\xx$ is $\fq$-Frobenius nonclassical is equivalent to $W=0 \in \fq(\xx)$, where

\begin{equation}\label{W}
W:=\left|\begin{array}{ccccc}
x-x^{q} & x^{2}-x^{2q} & y-y^{q} & xy-x^{q}y^{q} & y^{2}-y^{2q} \\
1 & 2x & D_x^{(1)}(y) & D_x^{(1)}(xy) & D_x^{(1)}(y^{2}) \\
0 & 1 & D_x^{(2)}(y) & D_x^{(2)}(xy) & D_x^{(2)}(y^{2}) \\
0 & 0 & D_x^{(3)}(y) & D_x^{(3)}(xy) & D_x^{(3)}(y^{2}) \\
0 & 0 & D_x^{(4)}(y) & D_x^{(4)}(xy) & D_x^{(4)}(y^{2}) 
\end{array}\right|.
\end{equation}
Using the formula $D_x^{(i)}(fg)=\sum\limits_{j=0}^{i}D_x^{(j)}(f)D_x^{(i-j)}(g)$ (see e.g. \cite[Lemma 5.72]{HKT}) and elementary properties of Determinants, we obtain

\begin{equation}\nonumber
W=\left|\begin{array}{ccccc}
x-x^{q} & x^{2}-x^{2q} & y-y^{q} & 0 & -(y^{q}-y)^2 \\
1 & 2x & D_x^{(1)}(y) & y-y^q & 0 \\
0 & 1 & D_x^{(2)}(y) & D_x^{(1)}(y) & (D_x^{(1)}(y))^2 \\
0 & 0 & D_x^{(3)}(y) & D_x^{(2)}(y) & 2D_x^{(1)}(y)D_x^{(2)}(y) \\
0 & 0 & D_x^{(4)}(y) & D_x^{(3)}(y) & 2D_x^{(1)}(y)D_x^{(3)}(y)+ (D_x^{(2)}(y))^2
\end{array}\right|.
\end{equation}

Equation (\ref{eq nova})  with the hypothesis $p|2n-1$ gives us

\begin{equation}\nonumber
D_x^{(1)}(y)=\frac{2ax^{2n-1}+bx^{n-1}}{2y^{2n-1}} \ \ \textrm { and } \ \ \ D_x^{(i)}(y)=\frac{(n-1)\ldots(n-i+1)bx^{n-i}}{2i!y^{2n-1}}
\end{equation}
for $i>1$. Through standard computations and bearing in mind that $p|2n-1$, we obtain 

\begin{eqnarray}
W & = &\frac{b^2x^{2n-6}}{1024y^{8n-4}}  \big(-2bx^{n}y^{2n}-2y^{4n+q-1}-2abx^{3n+q-1}+ 2abx^{3n}+y^{4n}\nonumber \\
  & + & 2bx^{n}y^{2n+q-1}+ y^{4n+2q-2}+ a^2x^{4n}+b^2x^{2n}+a^2x^{4n+2q-2}-b^2x^{2n+q-1}\nonumber\\ 
  & - & 2a^2x^{4n+q-1}- 2ax^{2n}y^{2n}+2ax^{2n}y^{2n+q-1}+2ax^{2n+q-1}y^{2n}\nonumber\\ 
  & - & 2ax^{2n+q-1}y^{2n+q-1}\big).\nonumber 
\end{eqnarray}

Therefore, $W=\frac{b^2x^{2n-6}}{1024y^{8n-4}}\cdot W_1 \cdot W_2$, where
\begin{equation}\nonumber
W_1:=ax^{2n+q-1}-y^{2n+q-1}+bx^{\frac{2n+q-1}{2}}+y^{2n}-ax^{2n}-bx^n
\end{equation}
and
\begin{equation}\nonumber
W_2:=ax^{2n+q-1}-y^{2n+q-1}-bx^{\frac{2n+q-1}{2}}+y^{2n}-ax^{2n}-bx^n.
\end{equation}

From equation (\ref{eq nova}), we can write

\begin{equation}\label{mod f1}
W_1=y^{2n+q-1}-ax^{2n+q-1}-bx^{\frac{2n+q-1}{2}}-c 
\end{equation}
and
\begin{equation}\label{mod f2}
W_2=y^{2n+q-1}-ax^{2n+q-1}+bx^{\frac{2n+q-1}{2}}-c.
\end{equation}

Now consider $W_1$ and $W_2$ as polynomials. Since $W=0 \in \fq(\xx)$, there are two possibilities:
\begin{itemize}
\item[(i)] $(y^{2n}-ax^{2n}-bx^n-c) \big| W_1$. In this case, by Lemma \ref{pol},  $2n|2n+q-1$ and
$$
ax^{2n+q-1}+bx^{\frac{2n+q-1}{2}}+c = (ax^{2n}+bx^n+c)^{\frac{2n+q-1}{2n}}.
$$
It can be checked that the equality above implies $\frac{2n+q-1}{2n}=p^v$ for some $v>0$, i.e., $n=\frac{q-1}{2(p^v-1)}$, and hence $v$ is a proper divisor of $h$. Furthermore, $a^{p^v}=a$, $b^{p^v}=b$ and $c^{p^v}=c$, which means that $a,b,c \in \F_{p^v}$. 
\item[(ii)] $(y^{2n}-ax^{2n}-bx^n-c) \big| W_2$. By Lemma \ref{pol}, $n=\frac{q-1}{2(p^v-1)}$, where $v$ is a proper divisor of $h$. Moreover, $a^{p^v}=a$, $b^{p^v}=-b$ and $c^{p^v}=c$. Hence $a, c \in \F_{p^v}$ and $b \in \fq$ is such that $b^{p^v-1}=-1$. Since $b^2 \in \F_{p^v}$, there exists $\alpha \in \fq$ such that $\alpha^{2n}=b^2$, using the surjectivity of the norm map $N:\fq \lra \F_{p^v}$. Thus up to the $\fq$-scaling $(x,y)\mapsto(\alpha x,y)$, the curve $\xx$ is defined over $\F_{p^v}$.
\end{itemize}

Conversely, assume that all but one of the coefficients $a_2$, $a_4$ and $a_5$ are zero, $n=\frac{q-1}{2(p^v-1)}$ for some integer $v<h$ with $v|h$ and that, up to $\fq$-scaling, the curve $\xx$ is defined over $\F_{p^v}$. We can suppose, without loss of generality, that $a_4 \neq 0$ and that $a_1,a_3,a_4,a_6 \in \F_{p^v}$. Then the curve $\xx$ is determined by the affine equation (\ref{eq nova}) with $a,b,c \in \F_{p^v}$. Hence$$W=\frac{b^2x^{2n-6}}{1024y^{8n-4}}\cdot W_1 \cdot W_2,$$ with $W, W_1$ and $W_2$ as in (\ref{W}), (\ref{mod f1}), and (\ref{mod f2}), respectively. Since $n=\frac{q-1}{2(p^v-1)}$, we have 
$$
2n+q-1=2np^v.
$$
Therefore,
$$
W_1=y^{2n+q-1}-ax^{2n+q-1}-bx^{\frac{2n+q-1}{2}}-c=(y^{2n}-ax^{2n}-bx^{n}-c)^{p^v}=0.
$$
Thus $W=0$, i.e., $\xx$ is $\fq$-Frobenius nonclassical with respect to $\D_2$. 
\end{proof}

\noindent
{\bf Proof of Theorem \ref{main frob}} It follows directly  from Lemmas \ref{lema9}, \ref{teo13} and \ref{teo 15}. \qed

\section{The number of rational points}
In this section, we use the preceding results  to discuss the possible values of  $N_q(\xx) $  in the case $s=2$. Since  the necessary and sufficient conditions for   the  $\fq$-Frobenius nonclassicality of $\xx$ were established,  we will be able to provide the exact number of $\fq$-rational points for these curves. In  the remaining cases, i.e., for the $\fq$-Frobenius classical curves $\xx$,  the St\"orh-Voloch bound (\ref{SV-s2}) gives
\begin{equation}\label{sv con}
N_q(\xx) \leq \frac{2d(5d+q-10)}{5}.
\end{equation}
where $d=\deg \xx$.
The next result gives the  number of $\fq$-rational points on the $\fq$-Frobenius nonclassical curves $\xx$ satisfying condition \emph{(1)} of Theorem \ref{main frob}.

\begin{thm}\label{nr ponto}
If $n=\frac{q-1}{p^v-1}$, with $v<h$ such that $v|h$, and $a_1,\cdots,a_6\in \F_{p^v}$ are such that the curve 
$\xx: a_1x^{2n}+a_2x^{n}y^{n}+a_3y^{2n}+a_4x^{n}z^{n}+a_5y^{n}z^{n}+a_6z^{2n}=0$
 is smooth, then 
\begin{equation}\label{nr pontos}
N_q(\xx)=n\Big(n(p^v+1)-\delta(n-1)\Big),
\end{equation}
where $\delta$ is the number of $\F_{p^v}$-rational points $P=(a:b:c)$ on  the conic $\cc:a_1x^2+a_2xy+a_3y^{2}+a_4xz+a_5yz+a_6z^2=0$, satisfying $abc=0$.
\end{thm}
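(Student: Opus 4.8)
The strategy is to count $\fq$-rational points on $\xx$ by pushing them forward along the norm-type covering map to the conic $\cc$, exactly as in the proof of Theorem \ref{exemplo}, but now keeping careful track of the fibers lying over the ``bad'' points of $\cc$ (those with a vanishing coordinate). First I would set $F(x,y,z)=G(x^n,y^n,z^n)$ where $G(x,y,z)=a_1x^2+a_2xy+a_3y^2+a_4xz+a_5yz+a_6z^2$, so that $\cc:G=0$ is a smooth (hence irreducible) conic over $\F_{p^v}$, and $\delta$ of its $\F_{p^v}$-rational points have a zero coordinate while $N_{p^v}(\cc)-\delta=p^v+1-\delta$ of them have all coordinates nonzero. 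Since $n=\frac{q-1}{p^v-1}$, the power map $t\mapsto t^n$ is the norm $N\colon\fq\to\F_{p^v}$, which is surjective with every nonzero fiber of size $n$ (and $0\mapsto 0$ with fiber $\{0\}$). This gives a well-defined surjection $\pi\colon\xx(\fq)\to\cc(\F_{p^v})$, $(\alpha:\beta:\gamma)\mapsto(\alpha^n:\beta^n:\gamma^n)$, and $N_q(\xx)=\sum_{Q\in\cc(\F_{p^v})}\#\pi^{-1}(Q)$.

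**Fiber counting.** The key computation is the size of $\pi^{-1}(Q)$ for each type of $Q$. For $Q=(a:b:c)$ with $abc\neq 0$ one may normalize $c=1$; the fiber consists of projective points $(\alpha:\beta:1)$ with $\alpha^n=a$, $\beta^n=b$, giving $n^2$ points. For a point with exactly one vanishing coordinate, say $Q=(a:b:0)$ with $ab\neq 0$, normalize $b=1$: the fiber is $\{(\alpha:1:0):\alpha^n=a\}$, of size $n$. For a point with two vanishing coordinates, say $Q=(1:0:0)$, the fiber is the single point $(1:0:0)$. I would need to check that the smoothness hypothesis on $\xx$ rules out certain degenerate configurations — in particular, a smooth conic over $\F_{p^v}$ passing through two coordinate vertices, say $(1:0:0)$ and $(0:1:0)$, would force $a_1=a_3=0$, contradicting $a_1a_3a_6\neq 0$ (which follows from smoothness of $\xx$ as recorded in (4.ii)); so at most one coordinate vertex of $\cc$ is $\F_{p^v}$-rational, and more to the point I should argue that each bad point contributes a fiber of size $n$, OR, if $\cc$ passes through a vertex, handle that case separately. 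Actually the cleanest bookkeeping: write $N_q(\xx)=n^2\cdot(\#\{Q\in\cc(\F_{p^v}):\text{all coords}\neq 0\}) + (\text{contribution of the }\delta\text{ bad points})$. Each bad point contributes at most $n$ (size $n$ if one coordinate vanishes, $1$ if two vanish). I expect the intended reading is that each of the $\delta$ bad points has exactly one zero coordinate — one should note that a point with two zero coordinates is a coordinate vertex, and I would verify (using smoothness $\Rightarrow a_1a_3a_6\neq0$) that $\cc$ can pass through at most one vertex, and then check whether the formula still reads correctly; if $\cc$ passes through, say, $(1:0:0)$, its fiber has size $1$, not $n$, so strictly the formula $n^2(p^v+1-\delta)+n\delta$ would need $\delta$ to count only the ``one zero coordinate'' points. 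I would state the lemma with the understanding (or explicit sub-hypothesis) that the $\delta$ bad points each have a single vanishing coordinate, or else absorb the vertex case.

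**Assembling the count.** Granting that each bad point contributes $n$ and each good point contributes $n^2$, and writing $\delta$ for the number of bad points and $p^v+1-\delta$ for the number of good points (using $N_{p^v}(\cc)=p^v+1$ since $\cc$ is a smooth conic), I get
\begin{equation}\nonumber
N_q(\xx)=n^2(p^v+1-\delta)+n\delta=n\big(n(p^v+1)-n\delta+\delta\big)=n\big(n(p^v+1)-\delta(n-1)\big),
\end{equation}
which is exactly (\ref{nr pontos}). The one genuine subtlety — the main obstacle — is the accounting for coordinate vertices of $\cc$: I must confirm that the hypotheses force at most one such vertex to be $\F_{p^v}$-rational (via $a_1a_3a_6\neq0$, which kills the vertices $(1:0:0)$, $(0:1:0)$, $(0:0:1)$ individually only if the corresponding $a_i=0$, so in fact $\cc$ passes through \emph{no} vertex, since $(1:0:0)\in\cc\Leftrightarrow a_1=0$, etc.), whence every bad point has exactly one vanishing coordinate and contributes a fiber of size $n$. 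That observation makes the formula clean and the rest is the elementary fiber count above.
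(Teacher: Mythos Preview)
Your argument is correct and follows essentially the same route as the paper: the same covering map $\pi$, the same fiber sizes $n^2$ and $n$, and the same final count $n^2(p^v+1-\delta)+n\delta$. The only difference is presentational: the paper disposes of the vertex case in one line by observing that smoothness of $\xx$ forces $(1:0:0),(0:1:0),(0:0:1)\notin\xx$ (equivalently $a_1a_3a_6\neq 0$, so none lie on $\cc$), whereas you arrive at the same conclusion after some deliberation --- your middle paragraph's hedging can simply be deleted once you note, as you do at the end, that $a_1a_3a_6\neq 0$ rules out all three vertices.
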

\begin{proof}
 As in the proof of Theorem \ref{exemplo}, consider the map $\pi:\xx(\fq) \lra \cc(\F_{p^v})$ given by $\pi(\alpha:\beta:\gamma)=(\alpha^n:\beta^n:\gamma^n)$. Since $\xx$ is nonsingular, $(1:0:0), (0:1:0), (0:0:1) \not\in \xx$. Hence $\# \pi^{-1}(Q)=n$ for all $Q=(a:b:c) \in \cc(\F_{p^v})$ such that $abc=0$. Additionally, $\# \pi^{-1}(Q)=n^2$ for all $Q=(a:b:c) \in \cc(\F_{p^v})$ such that $abc\neq 0$. Since $N_{p^v}(\cc)=p^v+1$, equation (\ref{fibra}) gives the result.
\end{proof}

\begin{exa}\label{exe1}
Consider the curve $\xx:x^{88}+3x^{44}y^{44}+y^{88}+3x^{44}z^{44}+3y^{44}z^{44}+z^{88}=0$ over $\F_{43^2}$. The curve $\xx$ has degree $d=2n$, where $n=\frac{43^2-1}{43-1}$. It can be checked that the conic $\cc:x^{2}+3xy+y^2+3xz+3yz+z^2=0$ has no $\F_{43}$-rational points $P=(a:b:c)$ with $abc=0$. Hence (\ref{nr pontos}) gives  $N_q(\xx)=85184$. 
\end{exa}

For the curves corresponding to case \emph{(2)} of Theorem \ref{main frob}, we have the following.

\begin{thm}\label{nr pontos2}
If $n=\frac{q-1}{2(p^v-1)}$, with $v<h$ such that $v|h$, and $a,b,c\in \F_{p^v}^{*}$  are such that the curve 
$\xx: ax^{2n}+bx^ny^n+cy^{2n}+z^{2n}=0$ is smooth, then
\begin{equation} \label{pontos}
N_q(\xx)=n\Big(q+3-(2n-1)\cdot \eta\Big),
\end{equation}
where $\eta$ is the number of distinct $\F_{p^v}$-roots of $ax^2+bx+c=0$.
\end{thm}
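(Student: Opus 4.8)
The plan is to compute $N_q(\xx)$ directly via the norm map of $\fq/\F_{p^v}$, as in the proof of Theorem~\ref{nr ponto}, but with one extra complication: since here $n=\frac{q-1}{2(p^v-1)}$, the $n$-th power map $\fq^*\to\fq^*$ has image the subgroup $\mu$ of order $2(p^v-1)$, which contains $\F_{p^v}^*$ with index $2$, while it is the $2n$-th power map $x\mapsto x^{2n}$ that equals the norm $N_{\fq/\F_{p^v}}\colon\fq^*\to\F_{p^v}^*$ (surjective, $2n$-to-$1$). I would first record what smoothness buys us: the three coordinate points are not on $\xx$, so every point of $\xx(\fq)$ has at least two nonzero coordinates; and the conic $\cc$ is nonsingular. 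For the latter, if $b^2-4ac=0$ then $\cc$ has the singular point $(b:-2a:0)$, which lifts through $n$-th roots to a singular point of $\xx$ (using $p\nmid 2n$); hence $\Delta:=b^2-4ac\neq0$, and $\eta=1+\chi(\Delta)$, where $\chi$ is the quadratic character of $\F_{p^v}$.

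Next I would partition $\xx(\fq)$ according to which coordinate (if any) vanishes. On $z=0$, after normalizing $\beta=1$, the equation $a\alpha^{2n}+b\alpha^n+c=0$ forces $u:=\alpha^n$ to satisfy $u^2=N(\alpha)\in\F_{p^v}$, hence $bu=-au^2-c\in\F_{p^v}$ and so $u\in\F_{p^v}$; thus $u$ runs over the $\eta$ roots of $ax^2+bx+c$ in $\F_{p^v}$, each lifting to $n$ values of $\alpha$, for a total of $n\eta$. On $x=0$ (resp. $y=0$) the equation becomes $\gamma^{2n}=-c$ (resp. $-a$), with $-c,-a\in\F_{p^v}^*$, so surjectivity of the norm yields $2n$ points in each case. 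For the points with $\alpha\beta\gamma\neq0$, normalizing $\gamma=1$ and setting $u=\alpha^n$, $w=\beta^n\in\mu$ reduces the count to $n^2\cdot\#\{(u,w)\in\mu^2: au^2+buw+cw^2=-1\}$.

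The crux is this last count. Writing $Q(u,w)=au^2+buw+cw^2$: from $u^2,w^2\in\F_{p^v}$ and $buw=-1-au^2-cw^2\in\F_{p^v}$ one gets $uw\in\F_{p^v}$, hence $w/u=uw/u^2\in\F_{p^v}$, so $u$ and $w$ lie in a common coset of $\F_{p^v}^*$ in $\mu$. Fixing $\theta\in\mu\setminus\F_{p^v}^*$ (so $\theta^2$ is a nonsquare in $\F_{p^v}^*$), the substitution $(u,w)=(\theta u_0,\theta w_0)$ on the nontrivial coset turns $Q=-1$ into $Q(u_0,w_0)=-\theta^{-2}$, whence $\#\{(u,w)\in\mu^2: Q=-1\}=T_{-1}+T_{-\theta^{-2}}$ with $T_e:=\#\{(u,w)\in(\F_{p^v}^*)^2: Q(u,w)=e\}$. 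Using the standard fact that $\#\{(u,w)\in\F_{p^v}^2: Q(u,w)=e\}=p^v-\chi(\Delta)$ for $e\neq0$ and subtracting the solutions with $u=0$ or $w=0$, I get $T_e=p^v-2-\chi(\Delta)-\chi(e)\bigl(\chi(a)+\chi(c)\bigr)$; since $\chi(-\theta^{-2})=-\chi(-1)$, the $\chi(a)+\chi(c)$ terms cancel and $T_{-1}+T_{-\theta^{-2}}=2\bigl(p^v-2-\chi(\Delta)\bigr)$.

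Putting the four contributions together gives $N_q(\xx)=n\eta+4n+2n^2\bigl(p^v-2-\chi(\Delta)\bigr)$; substituting $\chi(\Delta)=\eta-1$ and $2n^2p^v=nq-n+2n^2$ (from $2n(p^v-1)=q-1$) collapses this to $n\bigl(q+3-(2n-1)\eta\bigr)$, as claimed. The main obstacle is the $\mu$-versus-$\F_{p^v}^*$ bookkeeping in the crux step — spotting the coset splitting and arranging the binary-form counts so the character terms cancel; once that and the standard count of values of a binary quadratic form over a finite field are in hand, the rest is routine.
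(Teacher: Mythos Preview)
Your proof is correct and follows essentially the same route as the paper's. Both arguments rest on the same key observation: the image $\mu$ of the $n$-th power map has index $2$ over $\F_{p^v}^*$, and for a solution $(u,w)\in\mu^2$ of $au^2+buw+cw^2=-1$ the two entries must lie in the same coset, reducing the main count to two affine conics over $\F_{p^v}$. The paper phrases this via the set $S=\F_{p^v}\cup\lambda\F_{p^v}$ (your $\theta$ is their $\lambda$) and evaluates each conic count from $|\cc(\F_{p^v})|=p^v+1$, whereas you compute $T_e$ directly with the quadratic character; your explicit verification that $\Delta\neq 0$ and the cancellation $\chi(-1)+\chi(-\theta^{-2})=0$ make transparent two points the paper leaves implicit.
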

\begin{proof}
Considering the irreducible conic $\mathcal{C}: ax^{2}+bxy+cy^{2}+z^{2}=0$, the map $\varphi: \xx \longrightarrow  \mathcal{C}$, given by
$(x:y:z) \mapsto (x^n:y^n:z^n)$ is well defined. Thus since $n=\frac{q-1}{2(p^v-1)}$, a point $P \in \mathcal{X} $ is $\F_{q}$-rational if and only if the nonzero coordinates of $Q=\varphi(P)$ satisfy the equation $t^{2(p^v-1)}=1$. That is, the point $Q$ is defined over either $\F_{p^v}$ or $\lambda \cdot\F_{p^v}$, where $\lambda$ is such that $\lambda^{p^v-1}=-1$.  Note  that the fiber of each point $Q=(x:y:z) \in \mathcal{C}$ has either $n^2$ or $n$ points, with  the latter case corresponding to the points for which $xyz=0$. Therefore, counting the  $\F_{q}$-rational points on $\xx$  reduces  to counting the  points $Q=(x:y:z) \in \mathcal{C}$ defined over the set $S:= \lambda \cdot\F_{p^v} \cup\F_{p^v} $, where  $\lambda^{p^v-1}=-1$.

The  computation will be based on two types of points $(x:y:z) \in \mathcal{C}$.
\begin{enumerate}
\item[(i)] {\it Case $xyz\neq 0$}.  For  $f(x,y):=ax^{2}+bxy+cy^{2}+1=0$, let $x_0,y_0 \in S\backslash \{0\}$ be such that $f(x_0,y_0)=0$. Since $a,b,c\in \F_{p^v}$,  either $x_0,y_0 \in \F_{p^v}$ or $x_0,y_0  \in \lambda \cdot \F_{p^v}$. Hence the number  sought  is given by the number  of points $(x_0,y_0)\in \F_{p^v}^{*}\times \F_{p^v}^{*}$ on the union of the two  distinct and irreducible conics 
$$\mathcal{C}_1: ax^{2}+bxy+cy^{2}+1=0,$$
  {\center and  } 
$$\mathcal{C}_2:  ax^{2}+bxy+cy^{2}+1/\lambda^2=0.$$
Clearly this number is $2(p^v+1)-(\#\mathcal{Z}_1+\#\mathcal{Z}_2)$, where $\mathcal{Z}_i$ is the set of points $Q=(x:y:z)$, with $xyz=0$, on the projective closure of $\mathcal{C}_i$, $i=1,2.$ Let $\mathcal{Z}_i \cap \{z=0\}\subseteq \mathcal{Z}_i  $ be the set of points on the line $z=0$. Note that  $\mathcal{Z}_1 \cap \{z=0\}=\mathcal{Z}_2 \cap \{z=0\}=\mathcal{Z}_1 \cap\mathcal{Z}_2$, and then
$$\eta:=\#(\mathcal{Z}_1 \cap\mathcal{Z}_2)$$ 
is the number of distinct $\F_{p^v}$-roots of $ax^{2}+bx+c=0.$ Since $1/\lambda^2\in \F_{p^v}$ is not a square, we can see that
$$\#\Big((\mathcal{Z}_1 \cup \mathcal{Z}_2)\cap  \{xy=0\}\Big)=4,$$
and then  $\#\mathcal{Z}_1+\#\mathcal{Z}_2=4+2\eta$. Therefore, the number of $\F_{q}$-rational points on $\mathcal{X}$, with nonzero coordinates, is given by 
\begin{equation} \label{afins}
n^2\Big(2(p^v+1)-(4+2\eta)\Big).
\end{equation}

\item[(ii)] {\it Case $xyz=0$.} We use  the notation from  the  previous case. Clearly  the  set of points on  $\mathcal{C}$ with coordinates defined over $S$  and satisfying $xyz=0$ is $\mathcal{Z}_1 \cup \mathcal{Z}_2$. Based on our previous  discussion, we have that  $\#(\mathcal{Z}_1 \cup \mathcal{Z}_2)=4+\eta$. Hence there will be    $n(4+\eta)$ $\F_{q}$-rational points on $\mathcal{X}\cap \{xyz=0\}$. 
\end{enumerate}
Finally, adding the number $n(4+\eta)$ to the one  given in (\ref{afins}) yields (\ref{pontos}), and    finishes  the proof.
\end{proof}

\begin{exa}\label{exe2}
Consider the curve $\xx:x^{20}+2x^{10}y^{10}-y^{20}+z^{20}=0$ over $\F_{19^2}$. Note that $\xx$ has degree $d=2n$, where $n=\frac{19^2-1}{2(19-1)}$. Since the equation $x^{2}+2x-1=0$ has no $\F_{19}$-rational roots, Theorem \ref{nr pontos2} gives  $N_q(\xx)=3640$. 
\end{exa}

\begin{rem}
Note that, in contrast to the $\fq$-Frobenius classical case, the number $N_q(\xx)$ in examples \ref{exe1} and \ref{exe2}  exceed the upper bound in (\ref{sv con}).
\end{rem}

\appendix
\section{ A special family of plane quartics}

In what follows, we  note some simple  facts regarding the irreducibility of  certain  plane quartics that  are used in some of the proofs of  this paper. Despite the
simplicity, their detailed proofs can be quite lengthy. Thus  for the sake of brevity,  in some cases we omit the details and just indicate  the main steps.

Hereafter, we assume that $K$ is an algebraically closed field with $char(K)\neq 2$.

\begin{lem}\label{quart irr}
Let  $a,b,c,d,e,f \in K$ be such that  $\mathcal{Q}: a(xy)^2+b(xz)^2+c(yz)^2+xyz(dx+ey+fz)=0$
is a projective plane quartic. Then $\mathcal{Q}$ is irreducible if and only if
\begin{equation}\label{det}
abc \cdot \left| \begin{array}{ccc}
a & d/2 & e/2 \\
d/2 & b & f/2 \\
e/2 & f/2 & c \end{array} \right|\neq 0.
\end{equation}
\end{lem}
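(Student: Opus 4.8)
The plan is to analyze the quartic
$$\mathcal{Q}: a(xy)^2+b(xz)^2+c(yz)^2+xyz(dx+ey+fz)=0$$
through its possible factorizations and show that reducibility forces the determinant in \eqref{det} to vanish. First I would dispose of the case where one of $a,b,c$ is zero. Say $c=0$; then $\mathcal{Q}$ becomes $a(xy)^2+b(xz)^2+xyz(dx+ey+fz) = x\bigl(ax y^2 + bx z^2 + yz(dx+ey+fz)\bigr)$, so $x$ divides $\mathcal{Q}$ and $\mathcal{Q}$ is reducible; symmetrically for $a=0$ or $b=0$. Thus if $abc=0$ then both sides of the claimed equivalence behave correctly (left side $0$, right side: $\mathcal{Q}$ reducible), so from now on assume $abc\neq 0$, and I must prove $\mathcal{Q}$ is reducible if and only if the $3\times 3$ determinant vanishes.

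For the forward direction, suppose $\mathcal{Q}$ factors. Since $abc\neq 0$, none of the coordinate lines $x=0$, $y=0$, $z=0$ divides $\mathcal{Q}$ (the monomials $(xy)^2$, $(xz)^2$, $(yz)^2$ each survive setting the third variable nonzero — e.g. restricting to $x=0$ leaves $c(yz)^2\neq 0$). Hence a factorization is either $2+2$ into two irreducible conics, or $1+3$ / $1+1+2$ / $1+1+1+1$ with each linear factor NOT a coordinate line. I would handle the linear-factor cases by substitution: if $\ell = \alpha x+\beta y+\gamma z$ divides $\mathcal{Q}$ with, say, $\gamma\neq 0$, eliminate $z$ and force the resulting binary quartic in $x,y$ to vanish identically, deriving relations among $a,b,c,d,e,f$ that I expect to collapse into the vanishing of the determinant (this is the routine but bookkeeping-heavy part). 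The main case is the $2+2$ split $\mathcal{Q} = Q_1 Q_2$ with $Q_i$ conics. Comparing coefficients: the $x^4,y^4,z^4$ coefficients of $\mathcal{Q}$ are all zero, which forces each $Q_i$ to have no $x^2$, $y^2$, or $z^2$ term unless cancellation occurs between the two factors — more precisely, writing $Q_i = \lambda_i x^2 + \mu_i y^2 + \nu_i z^2 + (\text{cross terms})$, the vanishing of $[x^4]\mathcal{Q}=\lambda_1\lambda_2$ etc. already pins down the shape of $Q_1,Q_2$ up to a few cases, e.g. $Q_1 = \alpha_1 xy + \beta_1 xz + \gamma_1 yz$ and $Q_2 = \alpha_2 xy + \beta_2 xz + \gamma_2 yz$ after absorbing scalars, or one of them degenerate into a product of two of the non-coordinate-type lines. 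Matching the remaining coefficients then gives $a = \alpha_1\alpha_2$, $b=\beta_1\beta_2$, $c=\gamma_1\gamma_2$, $d = \beta_1\gamma_2+\beta_2\gamma_1$, $e = \alpha_1\gamma_2+\alpha_2\gamma_1$, $f = \alpha_1\beta_2+\alpha_2\beta_1$, and a direct computation shows that the symmetric matrix $\left(\begin{smallmatrix} a & d/2 & e/2 \\ d/2 & b & f/2 \\ e/2 & f/2 & c\end{smallmatrix}\right)$ is then the "symmetrized product" $\tfrac12(u v^{\mathsf T} + v u^{\mathsf T})$ for the vectors $u=(\alpha_1,\beta_1,\gamma_1)$, $v=(\alpha_2,\beta_2,\gamma_2)$ up to reindexing, which has rank $\leq 2$ and hence determinant $0$.

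For the converse, suppose the determinant vanishes (with $abc\neq 0$). I would interpret $\mathcal{Q}=0$ after the monomial substitution $(X,Y,Z)=(yz,xz,xy)$ — equivalently, the standard quadratic Cremona transformation — which turns $\mathcal{Q}$ into the conic $\mathcal{C}: cX^2 + bY^2 + aZ^2 + dX\cdot\!(\text{correct pairing}) + \cdots = 0$ whose matrix is exactly (a permutation of) $\left(\begin{smallmatrix} a & d/2 & e/2 \\ d/2 & b & f/2 \\ e/2 & f/2 & c\end{smallmatrix}\right)$; vanishing of the determinant means $\mathcal{C}$ is a degenerate conic, i.e. a product of two linear forms $L_1(X,Y,Z)L_2(X,Y,Z)$. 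Pulling back along the Cremona map and clearing the denominators $x,y,z$, each $L_i(yz,xz,xy)$ is a quadratic in $x,y,z$, so $\mathcal{Q}$ is (up to the coordinate-monomial factors, which are controlled precisely because $abc\neq 0$ prevents spurious coordinate lines) a product $Q_1 Q_2$ of two conics, hence reducible. The one subtlety here — and the step I expect to be the genuine obstacle — is making this Cremona/substitution argument rigorous as an honest factorization of polynomials rather than a birational statement: one must check that no factor of $x$, $y$, or $z$ is lost or gained, and treat the sub-case where $\mathcal{C}$ is a double line (so $\mathcal{Q}$ would be a perfect square of a conic), confirming this still counts as reducible and is consistent with $abc\neq 0$. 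An alternative, fully elementary route for the converse is simply to reverse the coefficient-comparison of the forward direction: vanishing of the determinant of a symmetric $3\times 3$ matrix with nonzero diagonal is equivalent to its being writable as $\tfrac12(uv^{\mathsf T}+vu^{\mathsf T})$ with $u,v$ having all coordinates nonzero, and then $Q_1 = u_1 xy + u_2 xz + u_3 yz$, $Q_2 = v_1 xy + v_2 xz + v_3 yz$ (with the right index matching) multiply out to $\mathcal{Q}$; I would likely present this explicit factorization as the cleanest proof of the converse and relegate the Cremona picture to a remark, since the explicit product identity can be verified by direct expansion.
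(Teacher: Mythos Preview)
Your approach is correct in outline but takes a considerably longer route than the paper's. The paper's proof is essentially one line: the quartic $\mathcal{Q}$ is \emph{exactly} the image of the conic $\mathcal{C}: ax^2+by^2+cz^2+dxy+exz+fyz=0$ under the standard Cremona transformation $(x:y:z)\mapsto(xy:xz:yz)$. The condition $abc\neq 0$ says that $\mathcal{C}$ avoids the three fundamental points $(1:0:0),(0:1:0),(0:0:1)$, and the determinant condition says $\mathcal{C}$ is irreducible; since Cremona is a birational automorphism of $\mathbb{P}^2$, the image of an irreducible curve missing the base locus is irreducible. The paper then dismisses the converse as trivial --- if either condition fails, $\mathcal{C}$ is reducible or hits a base point, and the substitution $(X,Y,Z)=(xy,xz,yz)$ transports the factorization of $\mathcal{C}$ to one of $\mathcal{Q}$.

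You actually found this Cremona picture yourself --- it is your ``monomial substitution'' paragraph --- but you deployed it only for the direction $\det=0\Rightarrow$ reducible, worried about its rigor, and retreated to explicit coefficient-matching. The paper runs Cremona in the \emph{other} direction ($\det\neq 0\Rightarrow$ irreducible), where the rigor concern you raised simply does not arise: an irreducible curve through none of the base points has an irreducible image, with no bookkeeping of spurious coordinate factors needed. What your route buys is complete elementarity; what it costs is the unfinished linear-factor case (left as ``routine but bookkeeping-heavy''), plus the extra argument that any $2+2$ splitting forces both conics into the shape $\alpha xy+\beta xz+\gamma yz$. Incidentally, your coefficient identifications have $d$ and $f$ swapped --- expanding $Q_1Q_2$ gives $d=\alpha_1\beta_2+\alpha_2\beta_1$ as the $x^2yz$-coefficient --- though, as you note, the rank-$\le 2$ conclusion survives reindexing.
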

\begin{proof}
Consider the conic $\mathcal{C}: ax^2+by^2+cz^2+dxy+exz+fyz=0$ and assume condition (\ref{det}). This  implies that $\mathcal{C}$ is irreducible and does not  pass through any of the  points $(1:0:0)$, $(0:1:0)$, and $(0:0:1)$. Therefore,  the quartic  $\mathcal{Q}$ is the image of $\mathcal{C}$ by the standard Cremona transformation $(x:y:z:)\mapsto (xy:xz:zy)$. Hence $\mathcal{Q}$ is irreducible. The converse is trivial.
\end{proof}

For  $b,d,e \in K$, not all being zero, consider the plane projective quartic $\mathcal{Q}: F(x,y,z)=0$, where
\begin{equation}\label{eq 1}
F(x,y,z):=\Big((x +  y+  z)^2-b^2xy  -d^2 xz  + e^2yz \Big)^2-4\Big((bd - e)x - ey - ez\Big)^2yz.
\end{equation}

The idea is to find conditions on $a,b$, and $c$ for which the quartic  $\mathcal{Q}$ is irreducible. We begin with 
the following result, which states some basic  facts about the quartic $\mathcal{Q}$. The proof is trivial and will be omitted.

\begin{lem}\label{quart 2}
\begin{itemize}
\item[(i)] The polynomial $F$ defining the quartic $\mathcal{Q}$ satisfies 
\begin{equation}\label{eq2}\nonumber
F(x,y,z)=\Big((x +  y+  z)^2-e^2yz  -d^2 xz  + b^2xy \Big)^2-4\Big((ed - b)z - by - bx\Big)^2xy.
\end{equation}
 
 \item[(ii)] The points $P_1=(e^2 : d^2 : bde - d^2 - e^2)$, $P_2=(e^2 :bde -b^2 - e^2 : b^2)$ and $P_3=( bde - d^2-b^2:d^2:b^2)$ lie on $\mathcal{Q}$. Moreover, $P_1,P_2$, and $P_3$ are collinear if and only if  $$bde(b^2+d^2+e^2-bde)=0.$$
\end{itemize}
\end{lem}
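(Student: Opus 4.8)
The plan is to verify both items by direct but lightly organized computation: the statement is elementary, and the only real task is to avoid a brute-force expansion of two degree-four forms.

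For item (i), set $S:=x+y+z$. The first observation is that the two linear factors simplify, since
\[
(bd-e)x-ey-ez=bdx-eS \qquad\text{and}\qquad (ed-b)z-by-bx=edz-bS .
\]
Writing $U:=S^2-d^2xz$ and $A:=y(e^2z-b^2x)$, the given form of $F$ reads $(U+A)^2-4(bdx-eS)^2\,yz$, while the claimed second form reads $(U-A)^2-4(edz-bS)^2\,xy$. Since $(U+A)^2-(U-A)^2=4UA$, the difference of the two expressions equals $4y\big[U(e^2z-b^2x)-z(bdx-eS)^2+x(edz-bS)^2\big]$. Expanding the two squared linear forms, the mixed term $-2bde\,xzS$ cancels, and the remainder collapses to $x(edz-bS)^2-z(bdx-eS)^2=(e^2z-b^2x)(d^2xz-S^2)=-(e^2z-b^2x)U$; hence the bracket vanishes identically and the two forms coincide.

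For item (ii), I would first note that each of $P_1,P_2,P_3$ satisfies $x+y+z=bde$ — e.g. for $P_1$ one has $e^2+d^2+(bde-d^2-e^2)=bde$. Combined with the simplification from (i), this forces one of the two factors of $F$ to vanish at each point (at $P_1$, $bdx-eS=bde^2-bde^2=0$; at $P_2$ and $P_3$, using the second form from (i), $edz-bS=b^2de-b^2de=0$), so $F(P_i)$ reduces to the square of the remaining quadratic factor, which is then checked to be $0$ by substitution; thus $P_1,P_2,P_3\in\mathcal{Q}$. For the collinearity criterion I would compute the determinant of the matrix whose rows are $P_1,P_2,P_3$. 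Because every row sums to $bde$, replacing the third column by the sum of all three columns produces the constant column $bde$, so the determinant equals $bde$ times the determinant of the same matrix with third column all ones (when $bde=0$ the determinant is already $0$, in agreement with the claim). Subtracting the middle row from the other two and expanding along the last column, this last determinant equals $-(b^2+d^2+e^2-bde)^2$, so the full determinant is $-bde\,(b^2+d^2+e^2-bde)^2$. It vanishes exactly when $bde(b^2+d^2+e^2-bde)=0$, which is the asserted collinearity condition.

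There is no genuine obstacle here; the only pitfall is organizational, since expanding $F$ naively yields an unwieldy quartic, whereas the substitutions $bdx-eS$, $edz-bS$ and the identity $x+y+z=bde$ at the points $P_i$ keep every step to a line or two.
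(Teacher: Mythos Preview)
Your proof is correct and well organized; in particular, the substitutions $bdx-eS$, $edz-bS$ for item (i) and the column operation exploiting $x+y+z=bde$ for item (ii) make both verifications essentially one-line computations. The paper itself omits the proof entirely, stating only that it is trivial, so there is no alternative argument to compare against.
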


\begin{thm}\label{main ap}
The quartic  $\mathcal{Q}$ is reducible if and only if at least two of the elements $b,d,e\in K$ are zero or $b^2 + d^2 + e^2 - bde=4$.
\end{thm}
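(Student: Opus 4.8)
The plan is to analyze directly when the quartic $\mathcal{Q}: F(x,y,z)=0$, with $F$ as in \eqref{eq 1}, factors. The backward direction is easy: if two of $b,d,e$ vanish, say $d=e=0$, then $F=((x+y+z)^2-b^2xy)^2$ is visibly a perfect square of a conic, hence reducible; the other two cases are symmetric by Lemma \ref{quart 2}(i) and by inspection. If instead $b^2+d^2+e^2-bde=4$, then by Lemma \ref{quart 2}(ii) the three points $P_1,P_2,P_3$ on $\mathcal{Q}$ are collinear, and I would exhibit an explicit line through them that is a component of $\mathcal{Q}$ — or, more robustly, show that under this relation $F$ becomes a product of two conics (or a conic times two lines) by producing the factorization explicitly; the substitution $z\mapsto$ (something) combined with the structure $F=G^2-4L^2yz=(G-2L\sqrt{yz})(G+2L\sqrt{yz})$ suggests that $\mathcal{Q}$ always carries the two conics $G\pm 2L\sqrt{yz}$ formally, and the relation $b^2+d^2+e^2-bde=4$ is exactly what makes one of these ``half-integer'' expressions split off a rational component.

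For the forward (harder) direction, suppose $\mathcal{Q}$ is reducible and at most one of $b,d,e$ is zero; I must derive $b^2+d^2+e^2-bde=4$. The natural route is to use the built-in structure $F = G^2 - 4L^2 yz$ where $G=(x+y+z)^2-b^2xy-d^2xz+e^2yz$ and $L=(bd-e)x-ey-ez$. A factorization of a quartic of this shape into two conics $Q_1Q_2$ forces, comparing with $G^2-(2L\sqrt{yz})^2$, that the conics be ``conjugate'' in a suitable sense; alternatively a factorization into a line times a cubic can be ruled out or pinned down using the three marked points $P_1,P_2,P_3$ — if a line $\ell$ is a component, it must pass through at least... well, it need not pass through all of them, so instead I would intersect $\mathcal{Q}$ with the coordinate lines $x=0$, $y=0$, $z=0$: on $y=0$ one gets $F(x,0,z)=((x+z)^2-d^2xz)^2$, a perfect square, and similarly on $z=0$; on $x=0$ one gets $((y+z)^2+e^2yz)^2$. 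So $\mathcal{Q}$ is tangent to each coordinate line, which already restricts the possible component configurations severely (e.g. a transverse line component would meet a coordinate line at a single point, incompatible with the doubled intersection unless the line is itself tangent or passes through a coordinate vertex).

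Concretely I would argue: since $F\big|_{y=0}$, $F\big|_{z=0}$, $F\big|_{x=0}$ are each squares of conics, any irreducible component $C$ of $\mathcal{Q}$ meets each coordinate line in a scheme that is ``half'' of that square; running through the possibilities (two conics, conic + two lines, four lines, line + cubic) and matching these tangency conditions, one is reduced to finitely many candidate factorization types, and for each I would plug a generic parametrization of the candidate component back into $F=0$ and collect coefficients. The determinant condition \eqref{det conic}-style nondegeneracy is not assumed here, so all cases (including the conic $ax^2+\cdots$ degenerating) must be tracked. In every surviving case the coefficient comparison should collapse to the single equation $b^2+d^2+e^2-bde=4$ (or to two of $b,d,e$ being zero, which we have excluded). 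The main obstacle I anticipate is the bookkeeping of the case analysis: there are several factorization patterns and several near-degenerate sub-cases (e.g. $bd-e=0$ makes $L$ lose its $x$-term), and keeping the symmetry of Lemma \ref{quart 2}(i) in play to collapse symmetric sub-cases will be essential to keep the argument finite and readable. A cleaner alternative, if available, is to compute the discriminant of $\mathcal{Q}$ (or of the associated pencil of conics $G^2-\mu L^2yz$) as a polynomial in $b,d,e$ and show its only relevant factors are $bde$ and $b^2+d^2+e^2-bde-4$; I would try this first, since it would replace the case analysis by one determinant computation.
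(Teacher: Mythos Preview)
Your proposal contains a concrete misreading and is missing the paper's central structural idea.

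First, the error: you invoke Lemma~\ref{quart 2}(ii) to say that when $b^2+d^2+e^2-bde=4$ the points $P_1,P_2,P_3$ become collinear. But the lemma says collinearity holds iff $bde(b^2+d^2+e^2-bde)=0$, i.e.\ iff $bde=0$ or $b^2+d^2+e^2=bde$; the value $4$ plays no role there. So your plan to ``exhibit an explicit line through them that is a component'' is based on a false premise, and indeed the factorization under $b^2+d^2+e^2-bde=4$ is into two \emph{conics}, not a line and a cubic. The paper handles this direction by parametrizing $b=u+1/u$, $e=v+1/v$, observing that the relation forces $d=t+1/t$ with $t\in\{uv,u/v\}$, and then writing down the factorization $F(x,y,z)=H(x,u^2y,t^2z)\cdot H(x,u^{-2}y,t^{-2}z)$ with $H=x^2+y^2+z^2-2(xy+xz+yz)$.

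For the converse, your sketch (restrictions to coordinate lines, case analysis on component types, or a discriminant computation) is not wrong in spirit, but it is both vaguer and harder than what the paper actually does, and you miss the key device. The paper's argument is organized precisely around the trichotomy coming from Lemma~\ref{quart 2}(ii): if $bde\neq 0$ and $b^2+d^2+e^2-bde\neq 0$, the three points $P_1,P_2,P_3$ are in general position, so one sends them to the fundamental triangle $(1{:}0{:}0),(0{:}1{:}0),(0{:}0{:}1)$; in those coordinates $\mathcal{Q}$ becomes exactly a quartic of the shape $a(xy)^2+b(xz)^2+c(yz)^2+xyz(\cdots)=0$ treated in Lemma~\ref{quart irr}, and the determinant there evaluates to $(bde)^6(b^2+d^2+e^2-bde-4)$, giving the result in one stroke. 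The remaining boundary cases ($b^2+d^2+e^2=bde$, or exactly one of $b,d,e$ zero) are handled by separate short computations. This Cremona--transformation reduction to Lemma~\ref{quart irr} is the engine of the proof, and it does not appear in your proposal.
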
 
\begin{proof}If two of the elements $b,d,e\in K$ are zero, then the reducibility of  $\mathcal{Q}$ follows directly from (A.2) 
and Lemma A.2. If  $b^2 + d^2 + e^2 - bde=4$, then let $u,v \in K$ be such that $b=u+1/u$ and $e=v+1/v$. In this case, note that
$d=t+1/t$ where either $t=uv$ or $t=u/v$. From this, it can be checked that the factorization of  $F(x,y,z)$ is given by

                          $$F(x,y,z)=H(x,u^2y,t^2z)\cdot H(x,(1/u^2)y,(1/t^2)z),$$ 
where $H(x,y,z)=x^2+z^2+y^2-2(xy+xz+yz)$. To prove the converse, we consider the following three cases:

\begin{enumerate}
\item[(1)]\emph{$b^2 + d^2 + e^2 = bde$}. In this case, we have   $P_1=P_2=P_3=(e^2:d^2:b^2)$, and without loss of generality, assume $b\neq 0$. Dehomogenizing  $F(x,y,z)$ with respect to the variable $z$ and considering the following change of  variables $$f(x,y):=F(x-y+e^2/b^2,y+d^2/b^2,1),$$ we  focus on  the affine curve $ \mathcal{F}: f(x,y)=0$. Given the condition $b^2+d^2+e^2=bde$, it turns out that  $f(x,y)=f_4(x,y)+f_3(x,y)$,  where 
\begin{equation}\nonumber
f_4(x,y)=b^2x^4 - 2b^4x^3y + (b^6 + 2b^4)x^2y^2- 2b^6xy^3  + b^6y^4,
\end{equation} 
and  
\begin{eqnarray}\nonumber
f_3(x,y)&=& 4(bde-b^2d^2 )x^3 +4(2b^3de -b^4 - 2b^2e^2)x^2y \nonumber\\
        &+&4(b^4 - 2b^3de  + b^2d^2 + b^2e^2)xy^2.\nonumber
\end{eqnarray}
One  can check that resultant$(f_4(x,1),f_3(x,1))=b^{30}\neq 0$. Thus $\gcd(f_4,f_3)=1$, which implies
that $\mathcal{F}$ is an irreducible curve (see e.g. \cite[Problem 2.34]{Fu}).

\item[(2)]\emph{$b^2 + d^2 + e^2 - bde \neq 0,4$ and only one of the constants $b,d,e$ is zero}. Without loss of generality, we may assume $e = 0$, and therefore, $bd(b^2+d^2) \neq 0$. Setting  $u:=(b^2+d^2)/b^2$ and 
$$
M:=\left(\begin{array}{ccc}
-u & 0 & 0\\
u-1 & -1 & -u\\
1 & 1 & 0 \end{array} \right),
$$
we have that $\det M=-u^2\neq 0$. Let $T$ be the projective transformation associated to the matrix $M$ and define $G(x,y,z):=F(T(x,y,z))$. Dehomogenizing $G(x,y,z)$ with respect to the variable $z$, we find that the curve may be given by $f(x,y)=0$, where
\begin{eqnarray}
f(x,y)&=&\Big(y^2 + 2y +\frac{b^2(b^2 + d^2) + 4d^2}{b^2(b^2 + d^2)}\Big)x^2 \nonumber \\
      &-&\frac{2}{b^2}\Big(  (\frac{b^2 - d^2}{b^2 + d^2})y + 1\Big)x + \frac{1}{b^4}.\nonumber
\end{eqnarray}

Note that $f$ is a quadratic polynomial in $K(y)[x]$, which is reducible if  and only if its discriminant 
$$\Delta_{f}:= -\frac{16d^2}{b^2( b^2+d^2)^2}\Big(y^2 +(\frac{b^2+d^2}{b^2} )y +\frac{b^2+d^2}{b^4}\Big)$$
is a square in $K(y)$. This condition is equivalent to the discriminant of  $g(y)=y^2 +(\frac{b^2+d^2}{b^2} )y +\frac{b^2+d^2}{b^4}$, namely $\Delta_{g}:=\frac{(d^2+b^2)(d^2+b^2-4)}{b^4}$, being zero. Hence the result follows.

\item[(3)] \emph{$b^2 + d^2 + e^2 - bde \neq 0,4$ and $bde \neq 0$} . By Lemma \ref{quart 2}, the  points $P_1=(e^2 : d^2 : bde - d^2 - e^2)$, $P_2=(e^2 :bde -b^2 - e^2 : b^2)$, and $P_3=( bde - d^2-b^2:d^2:b^2)$ lie on $\mathcal{Q}$ and are not collinear. Consider the  projective change of coordinates mapping $P_1,P_2$, and $P_3$ to $(1:0:0), (0:1:0)$, and $(0:0:1)$, respectively. Based on this map, it can be checked that $\mathcal{Q}$ is projectively equivalent to the quartic defined by the equation
$$D(e^4x^2y^2 + d^4x^2z^2 + b^4y^2z^2)+2xyz(Ax+By+Cz)=0,$$
where 
$A=e^2d^2(bde+b^2  - d^2 - e^2),$
$B=e^2b^2(bde -b^2 + d^2 - e^2),$
$C=d^2b^2(bde -b^2  - d^2 + e^2),$ and
$D:=b^2 + d^2 + e^2 - bde$.
Since
$$
  \left| \begin{array}{ccc}
De^4 & A & B \\
A & Dd^4 & C \\
B & C & Db^4 \end{array} \right|=(bde)^6(b^2 + d^2 + e^2 - bde-4)\neq 0,
$$
Lemma \ref{quart irr} implies that $\qq$ is irreducible.
\end{enumerate}

\end{proof}

\subsection*{Acknowledgments}
The first and the second author were partially supported by FAPESP-Brazil, grants 2013/00564-1 and 2011/19446-3, respectively.

\end{document}